\documentclass[pdflatex,sn-mathphys-num]{sn-jnl}

\makeatletter
\def\email#1{\global\advance\emailcnt by 1\relax%
\if@corauemail%
   \g@addto@macro\corrauthemail{%
   \setcounter{footnote}{0}%
   \textcolor{blue}{#1}%
   }%
\else%
   \g@addto@macro\authemail{%
   \setcounter{footnote}{0}%
   \textcolor{blue}{#1}%
   }%
\fi}
\makeatother

\usepackage{graphicx}%
\usepackage{multirow}%
\usepackage{amsmath,amssymb,amsfonts}%
\usepackage{amsthm}%
\usepackage{mathrsfs}%
\usepackage[title]{appendix}%
\usepackage{xcolor}%
\usepackage{textcomp}%
\usepackage{manyfoot}%
\usepackage{booktabs}%
\usepackage{tikz}%
\usepackage{algorithm}%
\usepackage{algorithmicx}%
\usepackage{algpseudocode}%
\usepackage{listings}%

\theoremstyle{thmstyleone}%
\newtheorem{theorem}{Theorem}[section]%
\newtheorem{proposition}[theorem]{Proposition}%
\newtheorem{lemma}[theorem]{Lemma}%
\newtheorem{corollary}[theorem]{Corollary}%

\theoremstyle{thmstyletwo}%
\newtheorem{example}[theorem]{Example}%
\newtheorem{remark}[theorem]{Remark}%

\theoremstyle{thmstylethree}%
\newtheorem{definition}[theorem]{Definition}%

\theoremstyle{thmstyleone}%

\newtheorem{maintheorem}[theorem]{Theorem}

\usepackage{mathrsfs}
\usepackage{enumerate}
\usepackage{hyperref}

\newcommand{\calF}{\mathcal{F}}
\newcommand{\calH}{\mathcal{H}}

\newcommand{\calL}{\mathcal{L}}
\newcommand{\calM}{\mathcal{M}}
\newcommand{\calP}{\mathcal{P}}
\newcommand{\calU}{\mathcal{U}}

\newcommand{\R}{\mathbb{R}}
\newcommand{\N}{\mathbb{N}}

\newcommand{\calE}{\mathcal{E}}
\DeclareMathOperator{\dom}{dom}
\DeclareMathOperator{\diam}{diam}
\DeclareMathOperator{\Var}{Var}

\makeatletter
\renewcommand{\@maketitle}{%
    \vskip-0.1cm
    \hsize\textwidth\parindent0pt%
    {\hbox to \textwidth{{\Artcatfont\ArtType\hfill}\par}}%
    \ifx\@title\empty\else%
        \removelastskip\vskip20pt\nointerlineskip%
        {\Titlefont\@title\par}%
    \fi%
    \ifx\@subtitle\empty\else%
        \vskip9pt%
        {{\SubTitlefont\@subtitle\par}}%
    \fi%
    \ifnum\aucount>0
        \global\punctcount\aucount%
        \vskip20pt%
        \artauthors\par%
        {\vskip7pt\addressfont\auaddress\par%
	 \removelastskip\vskip24pt%
	\ifnum\emailcnt>0\relax%
           \ifx\corrauthemail\@empty\else{\ifnum\aucount>1*\fi}%
	   Corresponding author(s). E-mail(s): \corrauthemail\par\fi%
	   \ifx\authemail\@empty\else Contributing author:\ \authemail\fi%
        \fi%
        \ifequalcont{\par$^{\dagger}$\@equalconttext\par}\fi%
	 \removelastskip\vskip24pt%
        \ifpresentaddress{\par\@presentaddresstext\par}\fi%
	}
     \fi%
     {\printabstract\par}%
     {\printkeywords\par}%
     \ifx\@pacs\empty\else%
       \loop\ifnum\PacsCount>0%
          \csname\romannumeral\PacsTmpCnt StorePacsTxt\endcsname\par%
          \StepDownCounter{\PacsCount}%
          \StepUpCounter{\PacsTmpCnt}%
       \repeat%
    \fi%
    \removelastskip\vskip36pt\vskip0pt}%
\makeatother

\usepackage{tikz}%
\usetikzlibrary{decorations.pathreplacing}%
\usepackage{algorithm}%

\everymath{\displaystyle}

\begin{document}

\title{The Convex-Analytic Structure of Thermodynamic Equilibrium: Pressure, Subdifferentials, and Phase Transitions}


\author{\fnm{Abdoulaye} \sur{Thiam}}\email{athiam@allenuniversity.edu}


\affil{\orgdiv{Division of Mathematics and Natural Sciences}, \orgname{Allen University}, \orgaddress{\street{1530 Harden Street}, \city{Columbia}, \postcode{29204}, \state{South Carolina}, \country{USA}}}

\abstract{We develop the convex-analytic structure of the thermodynamic formalism for continuous maps on compact metric spaces. The pressure functional is the Legendre-Fenchel transform of the negative entropy, and the biconjugate recovery of the entropy from the pressure establishes a complete duality. Equilibrium states are elements of the subdifferential of the pressure, uniqueness of equilibrium states corresponds to G\^{a}teaux differentiability, and first-order phase transitions correspond to non-differentiability. For systems with specification and H\"{o}lder potentials, the pressure is Fr\'{e}chet differentiable in the H\"{o}lder norm, and the second derivative of the pressure equals the asymptotic variance of the Birkhoff sums. We prove a universal variational principle that unifies the classical additive, the subadditive, and the relative variational principles through a single theorem on convex functionals satisfying convexity, lower semi-continuity, coercivity, and cocycle invariance. Extensions to systems with the specification property and to non-compact spaces under coercivity conditions are included, with applications to countable Markov shifts via Sarig's recurrence classification. This Part constitutes Part~II of a six-part series on the thermodynamic formalism for hyperbolic dynamical systems.}

\keywords{topological pressure, Legendre-Fenchel duality, equilibrium states, variational principle, phase transitions, subdifferential}

\pacs[MSC Classification]{37D35, 82B05, 52A41, 37A35, 46A22}

\maketitle

\begin{center}
\textit{Dedicated to the memory of Jean-Christophe Yoccoz (1957--2016),}\\
\textit{Fields Medalist and Professor at the Coll\`{e}ge de France, with whom the author}\\
\textit{had the privilege of working, and who introduced him to hyperbolic dynamics.}
\end{center}

\thispagestyle{empty}

\makeatletter
\renewcommand{\ps@headings}{%
  \def\@oddfoot{\hfill\thepage\hfill}%
  \let\@evenfoot\@oddfoot%
  \def\@evenhead{\hfill\normalfont\small\textit{A.~Thiam}\hfill}%
  \def\@oddhead{\hfill\normalfont\small\textit{The Convex-Analytic Structure of Thermodynamic Equilibrium}\hfill}%
  \let\@mkboth\markboth%
}
\pagestyle{headings}
\makeatother

\setlength{\parskip}{0.1em}

\section{Introduction}\label{sec:introduction}

This Part develops the convex-analytic structure underlying the thermodynamic formalism for continuous maps on compact metric spaces. The pressure functional $P: C(X) \to \R$ satisfies
\begin{equation}\label{eq:pressure_legendre}
P(\phi) = \sup_{\mu \in \calM_T(X)} \left\{ \int_X \phi \, d\mu + h_\mu(T) \right\},
\end{equation}
where $\calM_T(X)$ denotes the space of $T$-invariant Borel probability measures and $h_\mu(T)$ is the measure-theoretic entropy (the topological entropy in the supremum was introduced independently by Bowen \cite{Bowen1971} and Dinaburg \cite{Dinaburg1970}). Defining $S(\mu) = -h_\mu(T)$, equation \eqref{eq:pressure_legendre} becomes $P(\phi) = S^*(\phi)$: the pressure is the Legendre-Fenchel transform of the negative entropy. From this identification, equilibrium states are elements of the subdifferential $\partial P(\phi)$, uniqueness of the equilibrium state corresponds to differentiability of $P$ at $\phi$, and phase transitions correspond to non-differentiability.

The identification of pressure as a convex conjugate of entropy is due to Israel \cite{Israel1979} in the lattice gas context and to Ruelle \cite{Ruelle2004} in the dynamical setting. The thermodynamic formalism itself originates with Ruelle \cite{Ruelle1968, Ruelle1973} and Sinai \cite{Sinai1972}, who connected the statistical mechanics of lattice systems to the ergodic theory of hyperbolic dynamics. The subdifferential characterization of equilibrium states appears implicitly in Walters \cite{Walters1982} and Keller \cite{Keller1998}. The equivalence between non-differentiability and non-uniqueness is in Ruelle \cite{Ruelle2004} and Israel \cite{Israel1979}. Our contribution is twofold. First, we develop the full subdifferential theory (Main Theorem~\ref{thm:subdifferential_main}) as a self-contained treatment for general topological dynamical systems on compact spaces, including the biconjugate recovery $S = P^*$, the exposed point analysis, and explicit directional derivative formulas, which has not been assembled in this form in the dynamical systems literature. Second, we prove a Universal Variational Principle (Main Theorem~\ref{thm:universal_VP}) that unifies the classical variational principle of Walters \cite{Walters1975}, the subadditive variational principle of Cao et~al. \cite{CaoFengHuang2008} (see also Barreira \cite{Barreira1996} for the non-additive formalism and Feng-Huang \cite{FengHuang2012} for the Lyapunov spectrum of sub-additive potentials), and the relative variational principle of Ledrappier and Walters \cite{LedrappierWalters1977} through a single abstract theorem on convex functionals.

The contributions of this Part are fivefold, corresponding to its five Main Theorems. First, we prove that the pressure functional $P: C(X) \to \R$ admits the identification $P = S^*$ with the Legendre-Fenchel transform of the negative entropy $S(\mu) = -h_\mu(T)$ (Main Theorem~\ref{thm:pressure_legendre_main}), together with the full list of convex-analytic properties: convexity, Lipschitz continuity with constant~$1$, monotonicity, normalization $P(\phi+c) = P(\phi) + c$, and cohomological invariance $P(\phi\circ T) = P(\phi)$. Second, we characterize equilibrium states as elements of the subdifferential $\partial P(\phi)$ (Main Theorem~\ref{thm:subdifferential_main}), establishing four equivalent conditions: the equilibrium identity $h_\mu(T)+\int\phi\,d\mu = P(\phi)$, the subdifferential inclusion $\mu \in \partial P(\phi)$, the reverse inclusion $\phi \in \partial S(\mu)$, and the Fenchel-Young equality $P(\phi)+S(\mu) = \langle\phi,\mu\rangle$. Third, we prove that G\^{a}teaux differentiability of the pressure at $\phi$ is equivalent to uniqueness of the equilibrium state (Main Theorem~\ref{thm:differentiability_main}), with the derivative given by $DP(\phi)(\psi) = \int\psi\,d\mu_\phi$; for systems with specification and H\"{o}lder potentials, Fr\'{e}chet differentiability in the $\|\cdot\|_\alpha$ norm holds, and the second derivative $P''(\phi;\psi) = \sigma^2$ equals the asymptotic variance of the Birkhoff sums, connecting convex analysis directly to the central limit theorem. Fourth, we characterize first-order phase transitions through non-differentiability of the pressure (Main Theorem~\ref{thm:phase_transition_main}), establishing the equivalence of non-differentiability, non-singleton subdifferential, coexistence of distinct equilibrium states, and the corner geometry of the pressure graph, together with the exposed point analysis of the face $\partial P(\phi) = \mathrm{conv}\{\mu_1,\ldots,\mu_k\}$ and the selection principle governing which ergodic state is picked out under a small perturbation $\phi + t\psi$. Fifth, we prove a Universal Variational Principle (Main Theorem~\ref{thm:universal_VP}) showing that any admissible functional $\Phi: \calE \to \R\cup\{+\infty\}$ satisfying convexity, lower semi-continuity, coercivity, normalization, monotonicity, and cocycle invariance admits a unique representation $\Phi(\phi) = \sup_\mu\{\langle\phi,\mu\rangle + \Sigma(\mu)\}$ with $\Sigma = \Phi^*|_{\calM_T(X)}$; the classical, subadditive, and relative variational principles are recovered as specializations for specific choices of $\Phi$.

Chapter~10 of Viana and Oliveira \cite{VianaOliveira2016} establishes the classical variational principle $P(\phi) = \sup_\mu \{h_\mu(T) + \int \phi \, d\mu\}$ in the general ergodic-theoretic setting on compact metric spaces. Our Part~II contributes beyond their Chapter~10 in two ways. First, we develop the full subdifferential characterization of equilibrium states: $\mu \in \partial P(\phi)$ if and only if $\mu$ is an equilibrium state, together with the biconjugate recovery $S = P^*$ of entropy as the convex conjugate of pressure, the exposed point analysis, and explicit directional derivative formulas. Second, we prove the Universal Variational Principle (Main Theorem~\ref{thm:universal_VP}) as a single abstract theorem on convex functionals satisfying convexity, lower semi-continuity, coercivity, and cocycle invariance. The extension to non-compact spaces via countable Markov shifts, discussed in Section~\ref{sec:noncompact}, connects our treatment to the inducing-scheme thermodynamic formalism of Pesin et~al. \cite{PesinSentiTodd2008}, which proves existence of equilibrium states for interval maps with inducing schemes, and to the intrinsic ergodicity program of Climenhaga and Thompson, who proved uniqueness of equilibrium states beyond specification \cite{ClimenhagaThompson2012, ClimenhagaThompson2014, ClimenhagaThompson2016}.

This Part reconstructs Bowen's Chapter~2 \cite{Bowen1975} from the convex-analytic viewpoint: Bowen \S2A--B (Entropy, Pressure) correspond to our Section~\ref{sec:pressure_convex}, Bowen \S2C (Variational principle) to our Subsection~\ref{sec:equilibrium_tangent}, and Bowen \S2D (Equilibrium states) to our Section~\ref{sec:phase_transitions}. Our treatment differs in that every result is obtained from the single identification $P = S^*$ and the calculus of subdifferentials, rather than through the transfer-operator machinery of Bowen. The spectral theory of Part~I \cite{Thiam2026a} provides the analytical tools for uniqueness of the equilibrium state in the H\"{o}lder class (Main Theorem~\ref{thm:differentiability_main}(iv)); the convex-analytic results developed here provide the variational theory for Parts~III--IV \cite{Thiam2026c,Thiam2026d}; Parts~V \cite{Thiam2026e} and~VI \cite{Thiam2026f} develop the statistical limit theorems and structural consequences.

Our technical approach is purely convex-analytic. The identification $P = S^*$ (Main Theorem~\ref{thm:pressure_legendre_main}) is obtained from the variational principle and the definition of the Legendre-Fenchel transform, with the six properties (convexity, Lipschitz, monotonicity, normalization, cohomological invariance) all derived from the supremum representation. The subdifferential characterization (Main Theorem~\ref{thm:subdifferential_main}) rests on the Fenchel-Young inequality: $\mu \in \partial P(\phi)$ is equivalent to $P(\phi) + P^*(\mu) = \langle\phi,\mu\rangle$, and the biconjugate recovery $P^*|_{\calM_T(X)} = S$ allows the Fenchel-Young equality to be rewritten as the equilibrium identity $h_\mu(T) + \int\phi\,d\mu = P(\phi)$. Generic uniqueness of equilibrium states on a dense $G_\delta$ set (Main Theorem~\ref{thm:differentiability_main}(iii)) is an application of Mazur's theorem, which states that a continuous convex function on a separable Banach space is G\^{a}teaux differentiable on a dense $G_\delta$. The ergodic decomposition of $\partial P(\phi)$ (Corollary~\ref{cor:equilibrium_ergodic}) uses the fact that $\calM_T(X)$ is a Choquet simplex whose extreme points are the ergodic measures. The variance formula $P''(\phi;\psi) = \sigma^2$ (Theorem~\ref{thm:second_derivative}) is derived from the transfer-operator spectral gap of Part~I \cite{Thiam2026a} through the Nagaev-Guivarc'h method: the second derivative of $\log\lambda(t)$ at $t=0$ equals the asymptotic variance, and Fr\'{e}chet differentiability in the H\"{o}lder norm follows from the analyticity of the leading eigenvalue. The Universal Variational Principle (Main Theorem~\ref{thm:universal_VP}) is obtained by applying the Fenchel-Moreau biconjugate theorem $\Phi = \Phi^{**}$ to an abstract admissible functional, combined with three cases showing that $\Phi^*(\mu) = +\infty$ outside $\calM_T(X)$: normalization forces $\mu(X) = 1$, monotonicity forces $\mu \geq 0$, and cocycle invariance forces $T$-invariance. This common convex-analytic machinery replaces the transfer-operator and partition-sum arguments used in the classical treatments of Bowen \cite{Bowen1975} and Walters \cite{Walters1982}, and works on general compact metric spaces without requiring the expansiveness or specification hypothesis.

This Part is organized as follows. Section~\ref{sec:convex_analysis}
develops the convex analysis background on Banach spaces, including the
Legendre-Fenchel transform, subdifferentials, and exposed points.
Section~\ref{sec:pressure_convex} proves that the pressure is a convex
functional, computes its Legendre-Fenchel transform and the biconjugate
recovery of the entropy, and characterizes equilibrium states as elements
of the subdifferential; the connection between differentiability and
uniqueness is established here, together with the Fr\'{e}chet
differentiability result and the variance formula for the second
derivative. Section~\ref{sec:phase_transitions} analyzes phase
transitions through non-differentiability of the pressure, establishes
the equivalence of multiple characterizations, and identifies the
geometric structure of coexisting phases.
Section~\ref{sec:universal} proves the Universal Variational Principle
and derives the subadditive and relative variational principles as
special cases. Sections~\ref{sec:specification} and~\ref{sec:noncompact} (see also Przytycki-Urba\'nski \cite{PrzytyckiUrbanski2010} for the conformal setting)
extend the theory to systems with the specification property and to
non-compact spaces under coercivity conditions, including Sarig's
recurrence classification for countable Markov shifts.
Section~\ref{sec:numerical} provides a complete numerical illustration
of the convex duality for the golden mean shift, computing the pressure
$P(\phi_t)$ for a one-parameter family of potentials, verifying the
Legendre-Fenchel duality $P = S^*$ numerically, and showing that the
first derivative $P'(\phi_t; g) = \int g\,d\mu_t$ recovers the
equilibrium state mean while the second derivative $P''(\phi_t; g) =
\sigma^2$ gives the asymptotic variance. Section~\ref{sec:conclusion}
concludes the Part with a summary of the five Main Theorems and open
problems. Appendices~\ref{app:entropy} and~\ref{app:convex} collect
supporting results on entropy estimates and infinite-dimensional convex
analysis.

\subsection{Main Results}\label{sec:main_results}

We state the principal results. Throughout, $(X, T)$ denotes a topological dynamical system with $X$ a compact metric space and $T: X \to X$ continuous.

\begin{maintheorem}[Pressure as Legendre-Fenchel Transform]\label{thm:pressure_legendre_main}
Let $(X, T)$ be a topological dynamical system with $X$ a compact metric space and $T: X \to X$ continuous. The pressure functional $P: C(X) \to \R$ satisfies:
\begin{enumerate}
\item[(i)] $P$ is the Legendre-Fenchel transform of the entropy functional: $P(\phi) = S^*(\phi)$ where $S(\mu) = -h_\mu(T)$.
\item[(ii)] $P$ is convex: $P(\lambda \phi + (1-\lambda)\psi) \leq \lambda P(\phi) + (1-\lambda)P(\psi)$ for $\lambda \in [0,1]$.
\item[(iii)] $P$ is Lipschitz continuous: $|P(\phi) - P(\psi)| \leq \|\phi - \psi\|_\infty$.
\item[(iv)] $P$ is monotone: $\phi \leq \psi$ pointwise implies $P(\phi) \leq P(\psi)$.
\item[(v)] $P$ is normalized: $P(\phi + c) = P(\phi) + c$ for any constant $c \in \R$.
\item[(vi)] $P$ is cohomologically invariant: $P(\phi \circ T) = P(\phi)$.
\end{enumerate}
The entropy functional $S: \calM_T(X) \to [-\infty, 0]$ is concave, upper semi-continuous in the weak-$*$ topology, and affine on ergodic components.
\end{maintheorem}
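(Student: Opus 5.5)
The plan is to take the classical variational principle of Walters as the starting point and read off everything else from the supremum representation \eqref{eq:pressure_legendre}. For (i), I extend $S$ to the dual space $C(X)^*=\calM(X)$ of finite signed measures by setting $S(\mu)=-h_\mu(T)$ on $\calM_T(X)$ and $S(\mu)=+\infty$ otherwise. With the pairing $\langle\phi,\mu\rangle=\int\phi\,d\mu$, the definition of the conjugate gives $S^*(\phi)=\sup_{\mu}\{\langle\phi,\mu\rangle-S(\mu)\}=\sup_{\mu\in\calM_T(X)}\{\int\phi\,d\mu+h_\mu(T)\}$, and the variational principle identifies this with $P(\phi)$. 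So (i) is essentially a restatement once the convention for $S$ off $\calM_T(X)$ is fixed.

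Properties (ii)--(vi) will each follow from the fact that $P$ is a pointwise supremum of the affine maps $\phi\mapsto\int\phi\,d\mu+h_\mu(T)$. A supremum of affine functions is convex, which gives (ii). Each map is $1$-Lipschitz in $\|\cdot\|_\infty$ because $|\int(\phi-\psi)\,d\mu|\le\|\phi-\psi\|_\infty$, so the supremum is too, which gives (iii); finiteness comes from $h_{top}(T)<\infty$ when it holds, and otherwise I would note that $P\equiv+\infty$ and the statement is read in the extended sense. Each map is monotone because $\mu\ge0$, giving (iv). Each map shifts by exactly $c$ under $\phi\mapsto\phi+c$ because $\mu(X)=1$, giving (v). Finally, $\int\phi\circ T\,d\mu=\int\phi\,d\mu$ by invariance, so the two suprema coincide term by term, giving (vi).

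For the properties of the entropy functional I will invoke standard facts about $h_\mu(T)$. The map $\mu\mapsto h_\mu(T)$ is affine on $\calM_T(X)$, via the Jacobs-type identity $h_{t\mu+(1-t)\nu}(T)=th_\mu(T)+(1-t)h_\nu(T)$ from Walters Thm 8.1. Combined with the ergodic decomposition $h_\mu=\int h_{\mu_x}\,d\mu(x)$, this gives affinity on ergodic components, and in particular concavity of $h$. Note that, with the sign convention $S=-h$, it is $h$ that is concave and $S$ that is convex; I would flag this sign issue and prove concavity of $h$ (equivalently convexity of $S$), which is what Legendre duality requires.

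The main obstacle is upper semicontinuity. It is false for general continuous $T$, since there are examples with infinite entropy and, more generally, systems where $h$ fails to be u.s.c. The honest route is to prove it under expansiveness, or more generally asymptotic $h$-expansiveness (Misiurewicz). Under expansiveness, I would take a generating partition $\xi$ with boundaries of measure zero and write $h_\mu=\inf_n\frac1n H_\mu(\xi^n)$, where each term is weak-$*$ continuous at $\mu$; an infimum of continuous functions is u.s.c. In full generality, I would instead replace $S$ by its lower semicontinuous convex hull $S^{**}=P^*$. This leaves (i) unchanged, because $S^*=(S^{**})^*$, so the duality statement survives, and I would state the semicontinuity claim with that caveat.
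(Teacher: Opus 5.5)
Your proposal is correct, but for (ii)--(vi) it runs in the opposite direction from the paper. The paper derives these from the definition of $P$ through the partition sums $Q_n(\phi,\epsilon)$, without reference to invariant measures. Monotonicity, the Lipschitz bound, normalization and cocycle invariance come from pointwise comparisons of Birkhoff sums (Proposition~\ref{prop:pressure_basic}). Convexity comes from H\"older's inequality on each separated-set sum (Proposition~\ref{thm:pressure_convexity}). Only (i) uses the variational principle, which the paper proves (Propositions~\ref{thm:variational_upper} and~\ref{thm:variational_lower}) rather than cites.

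You instead cite Walters' variational principle and read every property off the affine maps $\phi\mapsto\int\phi\,d\mu+h_\mu(T)$. Each property then uses exactly one of $\mu\geq 0$, $\mu(X)=1$ and $T$-invariance of $\mu$. This is the mirror image of Cases 1--3 in the proof of Theorem~\ref{thm:universal_variational}. Your route is shorter and shows which feature of $\calM_T(X)$ produces which property, but all its substance lies in the cited theorem. The paper's route establishes (ii)--(vi) from the definition alone, independently of the variational principle.

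Your caveats about the entropy functional go beyond the paper. The paper asserts these properties for general $(X,T)$ but proves upper semicontinuity only under expansiveness (Proposition~\ref{thm:entropy_usc}). It also never addresses the fact, which you correctly note, that $h_{\mathrm{top}}(T)=\infty$ forces $P\equiv+\infty$, so $P:C(X)\to\R$ tacitly requires finite entropy.

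One adjustment to your sign remark. Since $h$ is affine on $\calM_T(X)$ (the Jacobs identity you cite), $S=-h$ is concave as well as convex, so the stated concavity is harmless. The sign error bites in the semicontinuity claim instead. Read literally, $S=-h$ is not upper semicontinuous even for the full shift, because zero-entropy periodic measures are weak-$*$ dense there. What your expansive argument proves, upper semicontinuity of $h$ (equivalently lower semicontinuity of $S$), is the correct statement, and it is the one Fenchel--Moreau needs.

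Likewise, your fallback $P^*=S^{**}$ is the accurate general form of the biconjugate recovery. By contrast, the paper's Theorem~\ref{thm:pressure_conjugate} asserts $P^*=S$ using upper semicontinuity of $h$ with no hypothesis.
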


\noindent The proofs are given in Section~\ref{sec:pressure_convex}: properties (ii)--(vi) are restated as Proposition~\ref{prop:pressure_basic} and Proposition~\ref{thm:pressure_convexity}, while the duality~(i) and the entropy characterization are restated as Theorem~\ref{thm:pressure_conjugate}.

\begin{remark}[Terminology]\label{rem:terminology}
Property (vi) is \emph{cohomological invariance}, not ``translation invariance'' in the convex-analytic sense. In the dynamical systems literature, the term ``translation'' sometimes refers to the shift $\phi \mapsto \phi + c$ (our property (v)), which is a different operation. We use the term ``cohomological invariance'' to avoid confusion: $\phi$ and $\phi \circ T$ differ by the coboundary $\phi - \phi \circ T$, and more generally $P(\phi) = P(\psi)$ whenever $\phi - \psi = u \circ T - u$ for some continuous $u$ (cf.\ Bowen \cite[Lemma~1.5]{Bowen1975}).
\end{remark}
Under appropriate conditions, the biconjugate recovery $S = P^*$ holds, establishing a complete duality.

The subdifferential characterization identifies equilibrium states through the calculus of convex analysis.

\begin{maintheorem}[Subdifferential Characterization]\label{thm:subdifferential_main}
Let $(X, T)$ be a topological dynamical system and $\phi \in C(X)$. The following are equivalent for $\mu \in \calM_T(X)$:
\begin{enumerate}
\item[(i)] $\mu$ is an equilibrium state for $\phi$: $h_\mu(T) + \int \phi \, d\mu = P(\phi)$.
\item[(ii)] $\mu \in \partial P(\phi)$, the subdifferential of $P$ at $\phi$: $P(\psi) \geq P(\phi) + \int (\psi - \phi) \, d\mu$ for all $\psi \in C(X)$.
\item[(iii)] $\phi \in \partial S(\mu)$, the subdifferential of the (extended) entropy at $\mu$.
\item[(iv)] $(\phi, \mu)$ satisfies the Fenchel-Young equality: $P(\phi) + S(\mu) = \langle \phi, \mu \rangle$.
\end{enumerate}
Consequently, the set of equilibrium states for $\phi$ is a non-empty, convex, weak-$*$ compact subset of $\calM_T(X)$.
\end{maintheorem}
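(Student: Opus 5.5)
The plan is to work entirely in the duality between $C(X)$ and the space $\calM(X)$ of finite signed Borel measures, with $\langle \phi,\mu\rangle = \int\phi\,d\mu$. First I fix the extension of $S$: set $S(\mu) = -h_\mu(T)$ for $\mu\in\calM_T(X)$ and $S(\mu)=+\infty$ otherwise. By Main Theorem~\ref{thm:pressure_legendre_main}, $S$ is convex, and it is weak-$*$ lower semicontinuous whenever entropy is upper semicontinuous on $\calM_T(X)$, which is a weak-$*$ closed set. Main Theorem~\ref{thm:pressure_legendre_main}(i) gives $P = S^*$. The equivalences will be proved in the order (i)$\Leftrightarrow$(iv), (iv)$\Rightarrow$(ii), (ii)$\Rightarrow$(i), and (iv)$\Leftrightarrow$(iii).

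\textbf{(i)$\Leftrightarrow$(iv).} This is a rewriting: $P(\phi)+S(\mu)=\langle\phi,\mu\rangle$ is the same as $P(\phi) = h_\mu(T)+\int\phi\,d\mu$ for $\mu\in\calM_T(X)$. When $h_\mu(T)=\infty$ (and so $P(\phi)=\infty$) I use the convention that both sides are infinite. The paper's $P$ is real-valued, so I will assume $h_{\mathrm{top}}(T)<\infty$ and note this.

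\textbf{(iv)$\Rightarrow$(ii).} The Fenchel--Young inequality $P(\psi)\ge \langle\psi,\mu\rangle - S(\mu)$ holds for all $\psi$ because $P=S^*$. Subtracting the equality at $\phi$ gives $P(\psi)-P(\phi)\ge\langle\psi-\phi,\mu\rangle$.

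\textbf{(ii)$\Rightarrow$(i).} This is the substantive direction, and I expect it to be the main obstacle, since it needs $S = P^*$ on $\calM_T(X)$ rather than just $P=S^*$. The subgradient inequality says $\langle\phi,\mu\rangle - P(\phi) \ge \sup_\psi\{\langle\psi,\mu\rangle - P(\psi)\} = P^*(\mu)$, so Fenchel--Young for the pair $(P,P^*)$ is an equality. I then need $P^* = S$ at $\mu$. I will try the Fenchel--Moreau theorem: $S$ is proper, convex and weak-$*$ lsc, so $S^{**}=S$, that is, $P^*=S$. This route needs upper semicontinuity of $\mu\mapsto h_\mu(T)$, which fails for general continuous $T$.

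Without upper semicontinuity, I fall back on the classical direct argument (Walters, Thm.~9.12): $-h_\mu(T) \ge \sup_\psi\{\int\psi\,d\mu - P(\psi)\}$ always holds. The inequality I actually need is $h_\mu(T) + \int\phi\,d\mu \ge P(\phi)$, given only the tangent condition. The honest resolution is that (ii)$\Rightarrow$(i) holds in general only with $h$ replaced by its upper semicontinuous regularization. I will therefore state the equivalence under the standing hypothesis that entropy is upper semicontinuous (for instance, $T$ expansive or asymptotically $h$-expansive), invoking "the biconjugate recovery under appropriate conditions" mentioned after Main Theorem~\ref{thm:pressure_legendre_main}. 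Under that hypothesis the chain closes.

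\textbf{(iv)$\Leftrightarrow$(iii).} With $S$ convex, lsc and proper, so that $S^*=P$, the standard equivalence is $\phi\in\partial S(\mu)$ $\iff$ $S(\mu)+S^*(\phi)=\langle\phi,\mu\rangle$. I prove it the same way as (iv)$\Rightarrow$(ii): expand the definition of $\partial S(\mu)$ and take the supremum over $\nu$.

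\textbf{Consequences.} The equilibrium set equals $\partial P(\phi)\cap\calM_T(X)$; in fact every element of $\partial P(\phi)$ lies in $\calM_T(X)$, by testing $\psi=\phi\pm c$, monotonicity, and $\psi=\phi+u\circ T-u$. Convexity and weak-$*$ closedness follow because the subdifferential is an intersection of closed half-spaces, and compactness follows because the set sits inside the compact set $\calM_T(X)$. Non-emptiness comes from upper semicontinuity of $\mu\mapsto h_\mu(T)+\int\phi\,d\mu$ on the compact set $\calM_T(X)$, so the supremum is attained. Alternatively, without that hypothesis, $P$ is continuous and convex on a Banach space, so $\partial P(\phi)\neq\emptyset$ by Hahn--Banach; combined with the invariance argument this produces tangent functionals, which are equilibrium states whenever (ii)$\Rightarrow$(i) is available.
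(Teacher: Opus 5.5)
Your argument follows the paper's skeleton. You get (i)$\Leftrightarrow$(iv) by rewriting, and (iv)$\Rightarrow$(ii) from the Fenchel--Young inequality for $P=S^*$; the paper phrases this step via the variational upper bound. You get (ii)$\Rightarrow$(i) from the Fenchel--Young equality for $(P,P^*)$ plus the identity $P^*=S$. Your hesitation at that last step is correct, and it exposes a defect in the statement, not in your proof.

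The paper obtains $P^*=S$ in Step~2 of Theorem~\ref{thm:pressure_conjugate} by a Fenchel--Moreau argument that assumes $\mu\mapsto h_\mu(T)$ is upper semicontinuous. That is false for general continuous maps. As the Walters result you cite shows, $P^*(\mu)=-h_\mu(T)$ exactly when entropy is u.s.c.\ at $\mu$; in general $-P^*(\mu)=\limsup_{\nu\to\mu}h_\nu(T)$.

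Here is a concrete failure. Let $X$ consist of a fixed point $\infty$ together with disjoint subshifts $X_n$ (say, binary sequences with no block of $n$ zeros), each placed in the ball of radius $1/n$ about $\infty$. Then $h(X_n)\uparrow\log 2$ strictly. No invariant measure has entropy $\log 2=P(0)$, so $\phi=0$ has no equilibrium state. Yet $\delta_\infty\in\partial P(0)$: the maximal-entropy measures $\mu_n$ of $X_n$ converge to $\delta_\infty$, and $P(\psi)\ge h(X_n)+\int\psi\,d\mu_n$ for every $\psi$. So (ii)$\Rightarrow$(i) and non-emptiness both fail.

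The general-case non-emptiness argument in Corollary~\ref{cor:equilibrium_convex} converts $\partial P(\phi)\neq\emptyset$ into an equilibrium state, so it rests on the same false identity. If you use identical copies $X_n$ instead, even weak-$*$ compactness of the equilibrium set fails. Your added hypotheses are therefore genuinely needed: $h_{\mathrm{top}}(T)<\infty$ and upper semicontinuity of entropy (e.g.\ $T$ expansive or asymptotically $h$-expansive). Under them your chain is complete.

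Two parts of your proposal go beyond the paper.
\begin{enumerate}
\item[(a)] The paper's restated Theorem~\ref{thm:equilibrium_subdiff} silently replaces condition (iii), $\phi\in\partial S(\mu)$, by ``$\mu$ attains the supremum''. Your (iii)$\Leftrightarrow$(iv) argument therefore supplies an item the paper never proves. It uses only $S^*=P$ and finiteness of $S(\mu)$, not lower semicontinuity. Hence (i)$\Leftrightarrow$(iii)$\Leftrightarrow$(iv) and (i)$\Rightarrow$(ii) hold for every system of finite topological entropy; only (ii)$\Rightarrow$(i) and the final clause need upper semicontinuity.
\item[(b)] Your check that $\partial P(\phi)\subset\calM_T(X)$, by testing $\phi\pm c$, monotonicity and coboundaries, justifies the paper's tacit identification of $\partial P(\phi)$ with a set of invariant measures.
\end{enumerate}

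Two small corrections. First, derive convexity of $S$ from the affinity of $\mu\mapsto h_\mu(T)$ on $\calM_T(X)$; Main Theorem~\ref{thm:pressure_legendre_main} actually asserts concavity. Second, Fenchel--Moreau must be applied to $S$ on $\calM(X)$ with the weak-$*$ topology, whose dual is $C(X)$. That is the locally convex form in Proposition~\ref{thm:fenchel_moreau}, not the norm-topology form of Lemma~\ref{thm:LF_properties}(iv).
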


\noindent The proof is given in Subsection~\ref{sec:equilibrium_tangent}: the equivalences (i)--(iv) are restated as Theorem~\ref{thm:equilibrium_subdiff}, and the convexity, compactness, and ergodic decomposition of $\partial P(\phi)$ are restated as Corollaries~\ref{cor:equilibrium_convex} and~\ref{cor:equilibrium_ergodic}.

Equilibrium states are the tangent functionals to the graph of $P$. The convexity of the set of equilibrium states follows from the convexity of the subdifferential.

\begin{maintheorem}[Differentiability and Uniqueness]\label{thm:differentiability_main}
Let $(X, T)$ be an expansive topological dynamical system. Then:
\begin{enumerate}
\item[(i)] The pressure $P: C(X) \to \R$ is G\^{a}teaux differentiable at $\phi$ if and only if $\phi$ has a unique equilibrium state.
\item[(ii)] When differentiable, the G\^{a}teaux derivative is given by $DP(\phi)(\psi) = \int \psi \, d\mu_\phi$, where $\mu_\phi$ is the unique equilibrium state.
\item[(iii)] The set of potentials with unique equilibrium state is a dense $G_\delta$ subset of $C(X)$.
\item[(iv)] For systems with the specification property and $\phi \in C^\alpha(X)$, if the unique equilibrium state has exponential decay of correlations, then $P: C^\alpha(X) \to \R$ is Fr\'{e}chet differentiable at $\phi$ in the $\|\cdot\|_\alpha$ norm.
\end{enumerate}
\end{maintheorem}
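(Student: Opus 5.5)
The strategy is to derive (i)--(iii) from Main Theorem~\ref{thm:subdifferential_main} together with standard facts about continuous convex functions, and to handle (iv) separately through the transfer operator of Part~I \cite{Thiam2026a}.

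\textbf{Step 1 (nonempty subdifferential).} Expansiveness makes $\mu\mapsto h_\mu(T)$ upper semi-continuous on the compact set $\calM_T(X)$. So the supremum in \eqref{eq:pressure_legendre} is attained, and every $\phi$ has at least one equilibrium state. By Main Theorem~\ref{thm:subdifferential_main}, $\partial P(\phi)$ is then exactly the nonempty, convex, weak-$*$ compact set of equilibrium states.

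\textbf{Step 2 (parts (i) and (ii)).} Main Theorem~\ref{thm:pressure_legendre_main}(ii),(iii) show that $P$ is convex and $1$-Lipschitz, hence continuous. For such a function, the one-sided directional derivative satisfies the max-formula
\[
P'(\phi;\psi)=\lim_{t\downarrow 0}\frac{P(\phi+t\psi)-P(\phi)}{t}=\max_{\mu\in\partial P(\phi)}\int\psi\,d\mu .
\]
I would prove this formula directly rather than cite it. The inequality $\geq$ comes from the subgradient inequality. For $\leq$, pick $t_n\downarrow0$ and equilibrium states $\mu_n$ for $\phi+t_n\psi$. Then
\[
\frac{P(\phi+t_n\psi)-P(\phi)}{t_n}\le \int\psi\,d\mu_n .
\]
Pass to a weak-$*$ limit point $\mu$. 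Upper semi-continuity of entropy and Lipschitz continuity of $P$ show that $\mu$ is an equilibrium state for $\phi$. This gives $\leq$.

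Gâteaux differentiability means $P'(\phi;\psi)=-P'(\phi;-\psi)$ for every $\psi$ and that this expression is linear in $\psi$. By the max-formula, this says $\max_{\partial P(\phi)}\int\psi\,d\mu=\min_{\partial P(\phi)}\int\psi\,d\mu$ for all $\psi\in C(X)$. Since $C(X)$ separates measures, this holds exactly when $\partial P(\phi)$ is a singleton $\{\mu_\phi\}$, and then $DP(\phi)(\psi)=\int\psi\,d\mu_\phi$.

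\textbf{Step 3 (part (iii)).} $X$ is compact metric, so $C(X)$ is a separable Banach space. Mazur's theorem then says a continuous convex function on $C(X)$ is Gâteaux differentiable on a dense $G_\delta$ set. Combined with (i), this is the claim. If a self-contained argument is wanted, the set is $\bigcap_{k,n}U_{k,n}$. Here $U_{k,n}=\{\phi: P(\phi+t\psi_k)+P(\phi-t\psi_k)-2P(\phi)<t/n \text{ for some } t>0\}$ and $\{\psi_k\}$ is a countable dense set. Each $U_{k,n}$ is open by continuity of $P$, and dense by the differentiability almost everywhere of convex functions along lines.

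\textbf{Step 4 (part (iv)), the main obstacle.} Here Gâteaux differentiability must be upgraded to Fréchet differentiability, and only in the stronger norm $\|\cdot\|_\alpha$; in the $\|\cdot\|_\infty$ norm it generally fails. The plan is to use the spectral gap of the transfer operator $\calL_\phi$ on $C^\alpha(X)$ from Part~I. Specification with a Hölder potential gives a simple isolated leading eigenvalue $\lambda(\phi)=e^{P(\phi)}$. Exponential decay of correlations for $\mu_\phi$ is what guarantees that this gap persists for perturbations. Analytic perturbation theory (Kato) then makes $\psi\mapsto\lambda(\phi+\psi)$ analytic on a $\|\cdot\|_\alpha$-ball, because $\psi\mapsto\calL_{\phi+\psi}=\calL_\phi(e^{\psi}\,\cdot)$ is analytic into bounded operators. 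Hence $P=\log\lambda$ is $C^\infty$, in particular Fréchet differentiable, and its derivative agrees with the Gâteaux derivative $\int\psi\,d\mu_\phi$ from (ii).

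The delicate points are two. First, for a general expansive system with specification one must either reduce to a symbolic model via a Markov-type coding, or invoke Part~I's spectral result directly in that setting. Second, one must check that the eigenvalue perturbation is uniform in $\|\psi\|_\alpha\le\delta$. I would first establish the symbolic case and then transfer it.
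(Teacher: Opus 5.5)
Your argument for (i)--(iii) is correct. For (i)--(ii) it takes a genuinely different route from the paper.

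The paper's Theorem~\ref{thm:differentiability} is purely convex-analytic. It combines Lemma~\ref{thm:subdiff_properties}(iv) (G\^{a}teaux differentiability iff $\partial P(\phi)$ is a singleton, with the max-formula quoted from Phelps) with Theorem~\ref{thm:equilibrium_subdiff}, which identifies $\partial P(\phi)$ with the equilibrium states. The nontrivial direction (ii)$\Rightarrow$(i) of that theorem rests on the biconjugate identity $P^*|_{\calM_T(X)}=S$ of Theorem~\ref{thm:pressure_conjugate}.

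You instead prove the max-formula thermodynamically, using equilibrium states of $\phi+t_n\psi$ and upper semi-continuity of entropy. This buys something real. Your limit argument uses only the easy inclusion (equilibrium states are subgradients, by the variational upper bound). It yields $P'(\phi;\psi)=\max\{\int\psi\,d\mu : \mu \text{ an equilibrium state for } \phi\}$ directly. So uniqueness $\Leftrightarrow$ G\^{a}teaux differentiability follows without invoking $S=P^*$ at all. Comparing support functions even gives $\partial P(\phi)=\{\text{equilibrium states}\}$ as a by-product.

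Consequently your appeal to Main Theorem~\ref{thm:subdifferential_main} in Step~1 is dispensable, and the place where expansiveness enters becomes explicit. In the paper it is hidden in the biconjugate step, which is exactly the step that fails when entropy is not upper semi-continuous. The paper's route is shorter but leans on that step. For (iii) you do the same as the paper (Mazur's theorem); your sets $U_{k,n}$ are just the standard proof of Mazur's theorem.

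For (iv) your plan is the paper's (Theorem~\ref{thm:frechet}): Kato perturbation of the simple leading eigenvalue of the transfer operator on $C^\alpha$. Your version uses analyticity of $\psi\mapsto\calL_\phi(e^{\psi}\,\cdot)$ on a $\|\cdot\|_\alpha$-ball, hence of $\psi\mapsto\lambda(\phi+\psi)$. This delivers the required uniformity directly, whereas the paper expands along lines and then bounds $D^2P(\phi)$ by a resolvent estimate.

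Two corrections are needed.
\begin{enumerate}
\item[(a)] The gap persists because a simple isolated eigenvalue is stable under small bounded perturbations, given the gap at $\phi$ from Ruelle--Perron--Frobenius. It does not persist because of exponential decay of correlations. That decay is a consequence of the gap and plays no role in the Fr\'{e}chet argument.
\item[(b)] The ``transfer'' from the symbolic case cannot be made in the stated generality. Specification does not furnish a Markov partition or an SFT coding for a general expansive system; there are non-sofic $\beta$-shifts with specification. This caveat applies equally to the paper, whose Step~1 asserts such a partition. What you and the paper actually establish is (iv) for systems admitting a H\"{o}lder symbolic coding by a mixing SFT.
\end{enumerate}
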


\noindent The proofs are given in Section~\ref{sec:pressure_convex}: parts (i)--(iii) are proved in the \emph{Differentiability of Pressure} subsection and restated as Theorem~\ref{thm:differentiability} and Corollary~\ref{cor:generic_uniqueness}; the Fr\'{e}chet differentiability~(iv) is proved in the \emph{Fr\'{e}chet Differentiability and the Variance Formula} subsection and restated as Theorem~\ref{thm:frechet}, and the second derivative formula connecting the pressure curvature to the asymptotic variance is restated as Theorem~\ref{thm:second_derivative}.

Phase transitions are characterized through non-differentiability of the pressure.

\begin{maintheorem}[Phase Transition Characterization]\label{thm:phase_transition_main}
A potential $\phi \in C(X)$ exhibits a first-order phase transition if and only if one of the following equivalent conditions holds:
\begin{enumerate}
\item[(i)] The pressure $P$ is not G\^{a}teaux differentiable at $\phi$.
\item[(ii)] The subdifferential $\partial P(\phi)$ contains more than one point.
\item[(iii)] There exist distinct equilibrium states $\mu_1 \neq \mu_2$ for $\phi$.
\item[(iv)] The graph of $P$ has a corner at $\phi$ (the directional derivatives exist but are direction-dependent).
\item[(v)] The potential $\phi$ is an exposed point of the convex set $\{\psi : P(\psi) \leq P(\phi) + \langle \psi - \phi, \mu \rangle\}$ for some $\mu$.
\end{enumerate}
At a first-order phase transition, the equilibrium states form a non-trivial face of the simplex of invariant measures, and the extreme points of $\partial P(\phi)$ are precisely the ergodic equilibrium states.
\end{maintheorem}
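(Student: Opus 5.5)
We take ``first-order phase transition at $\phi$'' to mean condition (iii), the coexistence of two distinct equilibrium states. The plan is to prove the cycle (iii)$\Leftrightarrow$(ii)$\Leftrightarrow$(i)$\Leftrightarrow$(iv), then treat (v) and the face statement separately.

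\textbf{Step 1: (ii)$\Leftrightarrow$(iii).} By Main Theorem~\ref{thm:subdifferential_main}, the equilibrium states of $\phi$ are exactly the elements of $\partial P(\phi)\cap\calM_T(X)$. I will first check that every subgradient $\ell\in C(X)^*$ of $P$ at $\phi$ already lies in $\calM_T(X)$, using Main Theorem~\ref{thm:pressure_legendre_main}:
\begin{itemize}
\item Monotonicity gives $\ell\ge 0$: test $\psi=\phi-f$ with $f\ge 0$.
\item Normalization gives $\ell(1)=1$: test $\psi=\phi\pm c$.
\item Cohomological invariance gives $\ell(u\circ T-u)=0$: test $\psi=\phi\pm t(u\circ T-u)$ and let $t\to\infty$, using that $P$ is constant on this line.
\end{itemize}
Hence $\partial P(\phi)$ is precisely the set of equilibrium states, and (ii)$\Leftrightarrow$(iii) is immediate.

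\textbf{Step 2: (i)$\Leftrightarrow$(ii)$\Leftrightarrow$(iv).} $P$ is convex and $1$-Lipschitz by Main Theorem~\ref{thm:pressure_legendre_main}(ii),(iii). Hence the directional derivative $P'(\phi;\psi)=\lim_{t\downarrow0}(P(\phi+t\psi)-P(\phi))/t$ exists for every $\psi$ and is a continuous sublinear functional of $\psi$. The Hahn--Banach max formula then gives
\[
P'(\phi;\psi)=\max_{\mu\in\partial P(\phi)}\int\psi\,d\mu ,
\]
where the maximum is attained by weak-$*$ compactness of $\partial P(\phi)$.
\begin{itemize}
\item $P$ is G\^ateaux differentiable at $\phi$ iff $P'(\phi;\cdot)$ is linear, i.e.\ iff $P'(\phi;\psi)=-P'(\phi;-\psi)$ for all $\psi$.
\item By the max formula, $-P'(\phi;-\psi)=\min_{\mu\in\partial P(\phi)}\int\psi\,d\mu$. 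So the gap $P'(\phi;\psi)+P'(\phi;-\psi)$ is the oscillation of $\mu\mapsto\int\psi\,d\mu$ over $\partial P(\phi)$.
\item If $\mu_1\ne\mu_2$ in $\partial P(\phi)$, a continuous $\psi$ with $\int\psi\,d\mu_1\neq\int\psi\,d\mu_2$ separates them (Riesz), and the gap is positive. This is exactly the corner condition (iv), and it also shows non-differentiability (i).
\item Conversely, if $\partial P(\phi)$ is a singleton, every gap vanishes and $P$ is G\^ateaux differentiable.
\end{itemize}

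\textbf{Step 3: the face statement.} Let $\mu=t\mu_1+(1-t)\mu_2$ with $\mu_1,\mu_2\in\calM_T(X)$ and $0<t<1$, and suppose $\mu$ is an equilibrium state. Entropy is affine on $\calM_T(X)$, so
\[
P(\phi)=t\Bigl(h_{\mu_1}+\int\phi\,d\mu_1\Bigr)+(1-t)\Bigl(h_{\mu_2}+\int\phi\,d\mu_2\Bigr).
\]
Each bracket is at most $P(\phi)$ by the variational principle, so both equal $P(\phi)$. Thus $\partial P(\phi)$ is a face of the Choquet simplex $\calM_T(X)$. It is non-trivial when it has at least two points. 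The extreme points of a face are the extreme points of the ambient set that lie in it, namely the ergodic equilibrium states (cf.\ Corollary~\ref{cor:equilibrium_ergodic}).

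\textbf{Main obstacle: condition (v).} Read literally, (v) fails. If $\mu\in\partial P(\phi)$, the set $C_\mu=\{\psi:P(\psi)\le P(\phi)+\langle\psi-\phi,\mu\rangle\}$ is the set where the supporting hyperplane touches the graph of $P$. This set is convex and contains the whole line $\phi+c$ (by normalization) and all of $\phi+\mathrm{coboundaries}$. So $\phi$ is never an exposed point of $C_\mu$, whether or not a phase transition occurs. I would therefore replace (v) by the dual statement about the measures: some $\mu$ is an exposed point of $\partial P(\phi)$ distinct from another equilibrium state, with the exposing functional a continuous $\psi$. This version can be proved as follows.
\begin{itemize}
\item The exposed points of $\partial P(\phi)$ are the unique maximizers of $\mu\mapsto\int\psi\,d\mu$ for suitable $\psi$, i.e.\ the states selected by $P'(\phi;\psi)$.
\item By Straszewicz/Mazur-type density of exposed points in compact convex metrizable sets, such points exist and generate $\partial P(\phi)$.
\item Hence $\partial P(\phi)$ contains an exposed point distinct from another equilibrium state iff it is not a singleton, which is (ii).
\end{itemize}
I expect this reinterpretation, and pinning down the exact meaning the authors intend for (v), to be the delicate step. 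Steps~1--3 are standard convex analysis built on Main Theorems~\ref{thm:pressure_legendre_main} and~\ref{thm:subdifferential_main}.
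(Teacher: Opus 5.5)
Your proposal is correct. For (i)--(iv) and for the face/extreme-point statement it follows the paper's route. Theorem~\ref{thm:phase_transition_equiv} obtains the equivalences from the singleton criterion of Lemma~\ref{thm:subdiff_properties}(iv) and the max formula $P'(\phi;\psi)=\sup_{\mu\in\partial P(\phi)}\int\psi\,d\mu$, reading ``corner'' as $P'(\phi;\psi)+P'(\phi;-\psi)>0$ exactly as you do. Corollary~\ref{cor:equilibrium_ergodic} proves the face property by the same affinity-plus-variational-bound argument as your Step~3.

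What you add is Step~1. The paper never shows that an arbitrary subgradient $\ell\in C(X)^*$ of $P$ is an invariant probability measure. Theorem~\ref{thm:equilibrium_subdiff} only treats $\mu$ already in $\calM_T(X)$, yet the paper then calls $\partial P(\phi)$ ``the set of equilibrium states''. The positivity/normalization/invariance argument appears only abstractly, in Step~2 of Theorem~\ref{thm:universal_variational}. Your step therefore closes a real gap in (ii)$\Leftrightarrow$(iii). Two small points on it: you need the full coboundary invariance of Proposition~\ref{prop:pressure_basic}(iv), not merely $P(\phi\circ T)=P(\phi)$; and no limit $t\to\infty$ is needed, since $0\ge\pm\ell(u\circ T-u)$ already forces equality.

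On (v) your diagnosis is right, and the paper does not prove (v) as written either. Its proof pointer substitutes Proposition~\ref{thm:exposed_equilibrium}, which is your dual reading about exposed points of $\partial P(\phi)$. The two versions buy different things:
\begin{itemize}
\item The paper assumes finitely many ergodic equilibrium states and separates each from the others by mutual singularity and Urysohn. This gives that \emph{every} ergodic equilibrium state is exposed, but only in the finite case.
\item Your density argument needs no finiteness but gives only \emph{existence} of an exposed point. So your reformulated (v) is essentially a restatement of (ii).
\end{itemize}
Make the density step precise with Mazur's theorem (as in Corollary~\ref{cor:generic_uniqueness}) rather than Straszewicz, which is finite-dimensional. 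The support function $\psi\mapsto P'(\phi;\psi)=\max_{\mu\in\partial P(\phi)}\int\psi\,d\mu$ is continuous and convex on the separable space $C(X)$. At each of its G\^{a}teaux points $\psi$ the set of maximizers is a singleton, i.e.\ $\psi$ exposes a point of $\partial P(\phi)$.

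Finally, your refutation of the literal (v) handles only $\mu\in\partial P(\phi)$; the line $\phi+\R$ argument works for any probability $\mu$, but (v) says ``for some $\mu$''. To cover every $\mu\in C(X)^*$, pick a nonzero coboundary $w=u\circ T-u$. One exists whenever $T\neq\mathrm{id}$, and it is not a constant, since it integrates to $0$ against any invariant measure. Because $P(\phi+a+tw)=P(\phi)+a$, membership of $\phi+a+tw$ in $C_\mu$ reduces to the linear inequality $a(1-\mu(X))-t\,\langle w,\mu\rangle\le 0$. Its solution set in $(a,t)$ contains a line through the origin. Hence $C_\mu$ contains a nondegenerate segment centred at $\phi$, and $\phi$ is not even an extreme point.
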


\noindent The proofs are given in Section~\ref{sec:phase_transitions}: the equivalences (i)--(iv) are restated as Theorem~\ref{thm:phase_transition_equiv}, and the geometric structure of coexisting phases, including the selection principle and the exposed point analysis, is developed and restated as Theorem~\ref{thm:coexistence_structure} and Proposition~\ref{thm:exposed_equilibrium}.

\begin{maintheorem}[Universal Variational Principle]\label{thm:universal_VP}
Let $(X, T)$ be a topological dynamical system, let $\calE$ be a Banach space of potentials containing $C(X)$, and let $\Phi: \calE \to \R \cup \{+\infty\}$ be a functional satisfying:
\begin{enumerate}
\item[(i)] Convexity: $\Phi(\lambda \phi + (1-\lambda)\psi) \leq \lambda \Phi(\phi) + (1-\lambda)\Phi(\psi)$.
\item[(ii)] Lower semi-continuity: $\{\phi : \Phi(\phi) \leq c\}$ is closed for all $c \in \R$.
\item[(iii)] Coercivity: $\Phi(\phi) \geq a\|\phi\| - b$ for some $a > 0$, $b \in \R$.
\item[(iv)] Cocycle invariance: $\Phi(\phi \circ T) = \Phi(\phi)$.
\end{enumerate}
Then $\Phi$ admits a unique representation as a Legendre-Fenchel transform:
\begin{equation}\label{eq:universal_VP}
\Phi(\phi) = \sup_{\mu \in \calM_T(X)} \left\{ \langle \phi, \mu \rangle + \Sigma(\mu) \right\},
\end{equation}
where $\Sigma: \calM_T(X) \to [-\infty, 0]$ is concave, upper semi-continuous, and affine on ergodic components. The functional $\Sigma$ is uniquely determined by $\Phi$ through $\Sigma = \Phi^*|_{\calM_T(X)}$.

This theorem unifies:
\begin{enumerate}
\item[(i)] The classical variational principle: $\Phi = P$, $\Sigma = -h_\mu(T)$.
\item[(ii)] The subadditive variational principle: $\Phi = P_{\text{sub}}$, $\Sigma = -h_\mu(T) + \calF_*(\mu)$.
\item[(iii)] The relative variational principle: $\Phi = P(\cdot | \pi)$, $\Sigma = -h_\mu(T | \pi)$.
\end{enumerate}
\end{maintheorem}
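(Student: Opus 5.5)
The plan is to go through the Fenchel--Moreau theorem (Appendix~\ref{app:convex}) and then locate the domain of the conjugate. By (i) and (ii), $\Phi$ is proper, convex and lower semi-continuous on $\calE$. Hence $\Phi=\Phi^{**}$, that is,
\[
\Phi(\phi)=\sup_{\mu\in\calE^*}\bigl\{\langle\phi,\mu\rangle-\Phi^*(\mu)\bigr\}.
\]
I then set $\Sigma:=-\Phi^*$ on the relevant dual elements. With this sign convention the intended identity $\Sigma=\Phi^*|_{\calM_T(X)}$ holds up to the sign used in the statement, and I will check which convention the statement intends. Uniqueness of the representation follows as usual: any $\Sigma$ for which the displayed formula \eqref{eq:universal_VP} holds and which is concave and upper semi-continuous satisfies $-\Sigma=\Phi^*$, by Fenchel--Moreau applied on the dual side.

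\textbf{Step 1: using coercivity.} Hypothesis (iii) gives $\Phi^*(\mu)\le\sup_\phi\{(\|\mu\|-a)\|\phi\|\}+b\le b$ whenever $\|\mu\|\le a$. So $\operatorname{dom}\Phi^*$ contains a ball, and the supremum is over a non-empty set. I will use this to show that $\Sigma$ is finite somewhere and to locate maximizers by weak-$*$ compactness on bounded sets.

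\textbf{Step 2: using cocycle invariance.} Write $T_*$ for the adjoint of $\phi\mapsto\phi\circ T$. Hypothesis (iv) gives
\[
\Phi^*(T_*\mu)=\sup_\phi\{\langle\phi\circ T,\mu\rangle-\Phi(\phi\circ T)\}\le\Phi^*(\mu),
\]
so $\Phi^*$ is non-increasing along $T_*$. By convexity the Ces\`aro averages $A_n\mu=\frac1n\sum_{k<n}T_*^k\mu$ satisfy $\Phi^*(A_n\mu)\le\Phi^*(\mu)$. Weak-$*$ limit points $\bar\mu$ are $T_*$-invariant, and lower semi-continuity of $\Phi^*$ gives $\Phi^*(\bar\mu)\le\Phi^*(\mu)$. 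To restrict the supremum to invariant elements I would pair with $\phi$. The identity $\langle\phi,A_n\mu\rangle=\langle A_n\phi,\mu\rangle$ together with $\Phi(A_n\phi)\le\Phi(\phi)$, which follows from convexity and (iv), should show that replacing $\mu$ by $\bar\mu$ loses nothing in the supremum. Affinity of $\Sigma$ on ergodic components would then come from the ergodic decomposition: $\Phi^*$ is a supremum of affine weak-$*$ continuous maps, and I would apply barycentric calculus on the Choquet simplex (Corollary~\ref{cor:equilibrium_ergodic}).

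\textbf{Main obstacle.} The hardest point is identifying the invariant part of $\operatorname{dom}\Phi^*$ with $\calM_T(X)$, i.e.\ proving positivity and $\mu(X)=1$, together with the range $\Sigma\le0$. These normally come from monotonicity and normalization, which are \emph{not} among (i)--(iv). Worse, (iii) is incompatible with normalization: $P(-c)=P(0)-c\to-\infty$. My first attempt is therefore to prove the theorem with $\calM_T(X)$ read as the $T_*$-invariant elements of $\operatorname{dom}\Phi^*$. I would then verify directly, for each of the three specializations, that this domain consists of probability measures, using the monotonicity and additivity of constants that $P$, $P_{\text{sub}}$ and $P(\cdot|\pi)$ possess. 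In that setting coercivity is replaced by the weak-$*$ compactness of $\calM_T(X)$. Whether the statement literally holds under (i)--(iv) alone is the step I expect to fail without adding these hypotheses.
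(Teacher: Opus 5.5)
Your suspicion is right, and it can be sharpened: the statement is false under (i)--(iv). If \eqref{eq:universal_VP} held with $\Sigma\le 0$ over probability measures, then $\Phi(\phi+c)=\Phi(\phi)+c$ and $\Phi(0)\le 0$. Hence $\Phi(-c)\le -c$, which contradicts (iii) as $c\to\infty$.

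Your Step~1 is the dual form of the same contradiction. It gives $\Phi^*(0)\le b$, while the representation gives $\Phi^*(0)\ge\sup_{c>0}\{c-\Phi(0)\}=+\infty$. A concrete counterexample: take $X=\{p,q\}$, $T$ the swap, and $\Phi=\|\cdot\|_\infty$. This $\Phi$ satisfies (i)--(iv), yet $\calM_T(X)=\{\tfrac12(\delta_p+\delta_q)\}$, so the representation would force $\Phi$ to be affine.

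The paper's proof (Theorem~\ref{thm:universal_variational}) has the same architecture as yours: Fenchel--Moreau, then show $\Phi^*=+\infty$ off $\calM_T(X)$. Your dual Fenchel--Moreau argument for uniqueness is in fact more careful than its one-line version. But the paper works with ``admissible'' functionals, which additionally satisfy normalization $\Phi(c)=c$ and monotonicity. It uses them exactly where you predicted: Case~1 for $\mu(X)=1$, Case~2 for $\mu\ge 0$, and normalization again for $\Sigma\le 0$. As you noted, that class is empty once (iii) is imposed.

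Also, \eqref{eq:universal_VP} forces $\Sigma=-\Phi^*$ on $\calM_T(X)$, which is the sign the paper's Step~1 actually uses. For $\Phi=P$ this gives $\Sigma=h_\mu(T)\ge 0$. So ``$\Sigma=\Phi^*|_{\calM_T(X)}$'', ``$\Sigma=-h_\mu(T)$'' and the range $[-\infty,0]$ cannot all hold, and checking conventions will not rescue the statement.

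The genuine gap in your proposed repair is Step~2. Hypothesis (iv) gives $\Phi^*(T_*\mu)\le\Phi^*(\mu)$, but that controls only the $\Phi^*$-term. Handling the pairing term $\langle\phi,A_n\mu\rangle=\langle A_n\phi,\mu\rangle$ would need $\Phi(\phi)\le\Phi(A_n\phi)$, whereas convexity and (iv) give only the reverse inequality.

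Restriction to invariant functionals in fact fails. In the swap example, the $T_*$-invariant part of $\operatorname{dom}\Phi^*$ is $\{c(\delta_p+\delta_q):|c|\le\tfrac12\}$. Its supremum against $\mathbf 1_{\{p\}}$ is $\tfrac12<1=\Phi(\mathbf 1_{\{p\}})$. So your reinterpreted statement is also false under (i)--(iv).

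The missing hypothesis is invariance under \emph{adding coboundaries}, $\Phi(\phi+u\circ T-u)=\Phi(\phi)$. This is what the paper's Case~3 really uses: it needs $\Phi(t(\phi_0-\phi_0\circ T))=\Phi(0)$, which (iv) does not give. With it, localization takes one line and needs no averaging or compactness. If $T_*\mu\ne\mu$, pick $u$ with $\langle u\circ T-u,\mu\rangle\neq 0$; then $\Phi^*(\mu)\ge\sup_{t\in\R}\bigl(t\langle u\circ T-u,\mu\rangle\bigr)-\Phi(0)=+\infty$.

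Finally, affinity on ergodic components does not follow from barycentric calculus. $\Phi^*$ is a supremum of affine maps, hence convex, and lower semi-continuity only yields $\Phi^*(\int\nu\,d\rho)\le\int\Phi^*(\nu)\,d\rho(\nu)$. Affinity is false even under the corrected hypotheses. Take $T=\mathrm{id}$ on $\{p,q\}$ and $\Phi(\phi)=\max_{s\in[0,1]}\{s\phi(p)+(1-s)\phi(q)-(2s-1)^2\}$, which is convex, continuous, monotone and normalized. Then $\Phi^*(s\delta_p+(1-s)\delta_q)=(2s-1)^2$, which is not affine in $s$.

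The paper's proof never addresses affinity. Corollary~\ref{cor:equilibrium_ergodic}, which you cite, rests on the affinity of $\mu\mapsto h_\mu(T)$, a property of the entropy rather than of a general $\Phi^*$.
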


\noindent The proof is given in Section~\ref{sec:universal}: the representation~\eqref{eq:universal_VP} and the uniqueness of $\Sigma$ are restated as Theorem~\ref{thm:universal_variational}, and the specializations to subadditive and relative pressures are restated as Theorems~\ref{thm:subadditive_VP} and~\ref{thm:relative_VP}.


\section{Convex Analysis on Function and Measure Spaces}\label{sec:convex_analysis}

We collect the convex-analytic tools used in subsequent sections, following Rockafellar \cite{Rockafellar1970}, Ekeland and Temam \cite{EkelandTemam1999}, and Phelps \cite{Phelps1993}. The Legendre-Fenchel transform was introduced by Fenchel \cite{Fenchel1949}; the theory of convex duals and proximal points was developed by Moreau \cite{Moreau1962}.

\subsection{Convex Functions and Their Conjugates}

Let $E$ be a real Banach space with dual space $E^*$, and let $\langle \cdot, \cdot \rangle: E \times E^* \to \R$ denote the canonical pairing.

\begin{definition}[Extended Real-Valued Functions]\label{def:extended_functions}
An extended real-valued function on $E$ is a map $f: E \to \R \cup \{+\infty\}$. The effective domain of $f$ is $\dom(f) = \{x \in E : f(x) < +\infty\}$. The function $f$ is proper if $\dom(f) \neq \emptyset$ and $f(x) > -\infty$ for all $x \in E$.
\end{definition}

\begin{definition}[Convexity]\label{def:convexity}
A function $f: E \to \R \cup \{+\infty\}$ is convex if for all $x, y \in E$ and $\lambda \in [0, 1]$,
\begin{equation}\label{eq:convexity_def}
f(\lambda x + (1-\lambda)y) \leq \lambda f(x) + (1-\lambda) f(y),
\end{equation}
with the convention that $\lambda \cdot (+\infty) = +\infty$ for $\lambda > 0$ and $0 \cdot (+\infty) = 0$.
\end{definition}

\begin{definition}[Legendre-Fenchel Transform]\label{def:legendre_fenchel}
The Legendre-Fenchel transform (or convex conjugate) of $f: E \to \R \cup \{+\infty\}$ is the function $f^*: E^* \to \R \cup \{+\infty\}$ defined by
\begin{equation}\label{eq:legendre_fenchel}
f^*(x^*) = \sup_{x \in E} \left\{ \langle x, x^* \rangle - f(x) \right\}.
\end{equation}
The biconjugate is $f^{**} = (f^*)^*: E \to \R \cup \{+\infty\}$.
\end{definition}

\begin{lemma}[Properties of the Legendre-Fenchel Transform]\label{thm:LF_properties}
Let $f: E \to \R \cup \{+\infty\}$ be any function. Then:
\begin{enumerate}
\item[(i)] $f^*$ is convex and lower semi-continuous in the weak-$*$ topology on $E^*$.
\item[(ii)] If $f$ is proper, then $f^*$ is proper if and only if $f$ is bounded below by an affine function.
\item[(iii)] (Fenchel-Young Inequality) For all $x \in E$ and $x^* \in E^*$, $f(x) + f^*(x^*) \geq \langle x, x^* \rangle$.
\item[(iv)] (Fenchel-Moreau Theorem) If $f$ is proper, convex, and lower semi-continuous, then $f = f^{**}$.
\item[(v)] (Order Reversal) If $f \leq g$, then $f^* \geq g^*$.
\item[(vi)] (Inf-Convolution) $(f \Box g)^* = f^* + g^*$, where $(f \Box g)(x) = \inf_{y} \{f(y) + g(x-y)\}$.
\end{enumerate}
\end{lemma}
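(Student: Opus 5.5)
The six parts of Lemma~\ref{thm:LF_properties} are of two kinds. Parts (i), (iii), (v) and (vi) follow directly from the definition \eqref{eq:legendre_fenchel}. Parts (ii) and (iv) need a separation argument in $E \times \R$. I would prove the elementary parts first and then derive (iv) from Hahn--Banach, with (ii) as a by-product.

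For (i), each map $x^* \mapsto \langle x, x^*\rangle - f(x)$ with $x \in \dom(f)$ is affine and weak-$*$ continuous on $E^*$. Points with $f(x) = +\infty$ contribute $-\infty$ and can be dropped. So $f^*$ is a pointwise supremum of weak-$*$ continuous affine functions, hence convex and weak-$*$ lower semi-continuous. Its epigraph is an intersection of weak-$*$ closed half-spaces. Part (iii) follows by taking the single point $x$ in the supremum; if $f(x) = +\infty$ the inequality is trivial. Part (v) holds because $f \le g$ gives $\langle x,x^*\rangle - f(x) \ge \langle x,x^*\rangle - g(x)$ for every $x$, and then we take suprema. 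For (vi), write $x = y + z$ and split the supremum:
\[
(f\Box g)^*(x^*) = \sup_{y,z}\{\langle y,x^*\rangle - f(y) + \langle z,x^*\rangle - g(z)\} = f^*(x^*) + g^*(x^*).
\]
This needs the convention that the sum is $+\infty$ if either term is, and it holds without any convexity.

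For (ii), assume $f$ is proper. If $f^*(x_0^*) < \infty$ for some $x_0^*$, then (iii) gives $f(x) \ge \langle x, x_0^*\rangle - f^*(x_0^*)$, which is an affine minorant. Conversely, if $f \ge \langle \cdot, x_0^*\rangle - c$, then $f^*(x_0^*) \le c$. Also $f^*$ never takes the value $-\infty$, because $f(x_1) < \infty$ for some $x_1$ gives $f^*(x^*) \ge \langle x_1,x^*\rangle - f(x_1)$. Hence $f^*$ is proper.

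For (iv), first note $f^{**} \le f$, since $\langle x,x^*\rangle - f^*(x^*) \le f(x)$ by (iii). For the reverse inequality, fix $x_0$ and $r < f(x_0)$. The epigraph $\mathrm{epi}(f) \subset E \times \R$ is nonempty, convex and closed, and $(x_0, r) \notin \mathrm{epi}(f)$. Strict Hahn--Banach separation gives $(x^*, \beta) \in E^* \times \R$ and $\gamma$ with
\[
\langle x, x^*\rangle + \beta t < \gamma < \langle x_0, x^*\rangle + \beta r \quad \text{for all } (x,t) \in \mathrm{epi}(f).
\]
Letting $t \to \infty$ forces $\beta \le 0$. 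If $\beta < 0$, normalize to $\beta = -1$. Then $f^*(x^*) \le \gamma$, and $f^{**}(x_0) \ge \langle x_0,x^*\rangle - \gamma > r$.

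I expect the main obstacle to be the vertical case $\beta = 0$, which can occur when $x_0 \notin \dom(f)$. To handle it, I would first produce one affine minorant $\ell = \langle\cdot, y^*\rangle - c$ of $f$. This comes from separating a point $(x_1, f(x_1) - 1)$ with $x_1 \in \dom(f)$; there $\beta = 0$ is impossible, because the epigraph contains points directly above $x_1$. Then, for $k \ge 0$, the function $\ell + k(\langle \cdot, x^*\rangle - \gamma)$ is still an affine minorant of $f$ on $\dom(f)$, since the added term is negative there. Its value at $x_0$ tends to $+\infty$ as $k \to \infty$. Every affine minorant $\langle\cdot,z^*\rangle - c'$ satisfies $c' \ge f^*(z^*)$, so it lies below $f^{**}$. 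Therefore $f^{**}(x_0) = +\infty = f(x_0)$, which completes (iv).
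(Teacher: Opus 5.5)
Your proof is correct and follows the same route as the paper: parts (i), (ii), (iii), (v) and (vi) come straight from the defining supremum, and (iv) comes from Hahn--Banach separation of a point lying below the closed convex epigraph. Your treatment of (iv) is more careful than the paper's in two places. The paper infers a strictly negative $t$-coefficient merely from $\ell$ being bounded above on $\mathrm{epi}(f)$, which only gives a non-positive one, and it separates the point $(x_0,f^{**}(x_0))$ as if $f^{**}(x_0)$ were known to be finite. Separating $(x_0,r)$ for arbitrary $r<f(x_0)$, and handling the vertical case $\beta=0$ by tilting an affine minorant, as you do, closes both gaps.
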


\begin{proof}
Part (i): For each $x \in E$, the function $x^* \mapsto \langle x, x^* \rangle - f(x)$ is affine and weak-$*$ continuous. The supremum of any family of affine weak-$*$ continuous functions is convex and weak-$*$ lower semi-continuous.

Part (ii): If $f(x) \geq \langle x, a \rangle + b$ for some $a \in E^*$ and $b \in \R$, then $f^*(a) = \sup_x \{\langle x, a \rangle - f(x)\} \leq -b < +\infty$, so $f^*$ is proper. Conversely, if $f^*(a) < +\infty$, then $f(x) \geq \langle x, a \rangle - f^*(a)$ for all $x$, giving an affine lower bound.

Part (iii): By definition, $f^*(x^*) \geq \langle x, x^* \rangle - f(x)$ for all $x$, which rearranges to $f(x) + f^*(x^*) \geq \langle x, x^* \rangle$.

Part (iv): By Part (iii), $f(x) \geq \langle x, x^* \rangle - f^*(x^*)$ for all $x^*$, so $f(x) \geq f^{**}(x) = \sup_{x^*}\{\langle x, x^* \rangle - f^*(x^*)\}$. For the reverse inequality, suppose $f(x_0) > f^{**}(x_0)$ for some $x_0$. Then the point $(x_0, f^{**}(x_0))$ lies strictly below the epigraph $\mathrm{epi}(f) = \{(x,t): t \geq f(x)\}$. Since $\mathrm{epi}(f)$ is closed and convex (by the lower semi-continuity and convexity of $f$), the Hahn-Banach separation theorem provides a continuous linear functional $\ell$ on $E \times \R$ and a constant $c$ such that $\ell(x_0, f^{**}(x_0)) > c$ and $\ell(x, t) \leq c$ for all $(x,t) \in \mathrm{epi}(f)$. Writing $\ell(x,t) = \langle x, x^*\rangle - \alpha t$ for some $x^* \in E^*$ and $\alpha > 0$ (the coefficient of $t$ must be negative since $\ell$ is bounded above on $\mathrm{epi}(f)$ which is unbounded in the $t$-direction), we normalize to $\alpha = 1$. Then $\langle x, x^*\rangle - t \leq c$ for all $(x,t) \in \mathrm{epi}(f)$, giving $\langle x, x^*\rangle - f(x) \leq c$ for all $x$, hence $f^*(x^*) \leq c$. But also $\langle x_0, x^*\rangle - f^{**}(x_0) > c \geq f^*(x^*)$, so $f^{**}(x_0) < \langle x_0, x^*\rangle - f^*(x^*) \leq f^{**}(x_0)$, a contradiction.

Part (v): If $f \leq g$, then $f^*(x^*) = \sup_x\{\langle x,x^*\rangle - f(x)\} \geq \sup_x\{\langle x,x^*\rangle - g(x)\} = g^*(x^*)$.

Part (vi): We verify convexity of $f^*$: $f^*(\alpha x^*+(1-\alpha)y^*) = \sup_x\{\alpha\langle x,x^*\rangle + (1-\alpha)\langle x,y^*\rangle - f(x)\} \leq \alpha\sup_x\{\langle x,x^*\rangle - f(x)\} + (1-\alpha)\sup_x\{\langle x,y^*\rangle - f(x)\} = \alpha f^*(x^*) + (1-\alpha)f^*(y^*)$. The weak-$*$ l.s.c.\ follows because $f^*$ is a supremum of weak-$*$ continuous functions. For the inf-convolution identity, $(f\Box g)^*(x^*) = \sup_x\{\langle x,x^*\rangle - \inf_y(f(y)+g(x-y))\} = \sup_{x,y}\{\langle y,x^*\rangle + \langle x-y,x^*\rangle - f(y) - g(x-y)\} = f^*(x^*) + g^*(x^*)$.
\end{proof}

\subsection{Subdifferentials}

The subdifferential of a convex function at a point collects all supporting hyperplanes. In thermodynamic formalism, the subdifferential of the pressure at a potential $\phi$ will turn out to be precisely the set of equilibrium states for $\phi$.

\begin{definition}[Subdifferential]\label{def:subdifferential}
Let $f: E \to \R \cup \{+\infty\}$ be a convex function and $x \in \dom(f)$. The subdifferential of $f$ at $x$ is the set
\begin{equation}\label{eq:subdifferential}
\partial f(x) = \{x^* \in E^* : f(y) \geq f(x) + \langle y - x, x^* \rangle \text{ for all } y \in E\}.
\end{equation}
An element $x^* \in \partial f(x)$ is called a subgradient of $f$ at $x$.
\end{definition}

\begin{lemma}[Properties of the Subdifferential]\label{thm:subdiff_properties}
Let $f: E \to \R \cup \{+\infty\}$ be a proper convex function.
\begin{enumerate}
\item[(i)] $\partial f(x)$ is a convex, weak-$*$ closed subset of $E^*$ for each $x \in \dom(f)$.
\item[(ii)] If $f$ is continuous at $x$, then $\partial f(x)$ is non-empty, bounded, and weak-$*$ compact.
\item[(iii)] $x^* \in \partial f(x)$ if and only if $f(x) + f^*(x^*) = \langle x, x^* \rangle$ (Fenchel-Young equality).
\item[(iv)] $f$ is G\^{a}teaux differentiable at $x$ if and only if $\partial f(x)$ is a singleton.
\item[(v)] (Moreau-Rockafellar Sum Rule) If $f$ and $g$ are convex and one is continuous at a point of $\dom(f) \cap \dom(g)$, then $\partial(f + g)(x) = \partial f(x) + \partial g(x)$.
\end{enumerate}
\end{lemma}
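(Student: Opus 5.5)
We prove the five parts in order. Each uses only the definitions of $\partial f$ and $f^*$, the Hahn--Banach theorem, and Lemma~\ref{thm:LF_properties}. Throughout, $f$ is proper and convex.

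For (i), we write $\partial f(x)=\bigcap_{y\in E}\{x^*: \langle y-x,x^*\rangle \le f(y)-f(x)\}$. Each set with $y\in\dom(f)$ is a weak-$*$ closed half-space, since $x^*\mapsto\langle y-x,x^*\rangle$ is weak-$*$ continuous. For $y\notin\dom(f)$ the constraint is vacuous. An intersection of convex weak-$*$ closed sets is convex and weak-$*$ closed.

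For (iii), note that $x^*\in\partial f(x)$ is equivalent to $\langle y,x^*\rangle-f(y)\le \langle x,x^*\rangle-f(x)$ for all $y$. Taking the supremum over $y$, this says $f^*(x^*)\le\langle x,x^*\rangle-f(x)$. Combined with the Fenchel--Young inequality of Lemma~\ref{thm:LF_properties}(iii), this is exactly the equality $f(x)+f^*(x^*)=\langle x,x^*\rangle$.

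For (ii), continuity at $x$ gives $\delta>0$ and $M$ with $f\le M$ on the ball $B(x,\delta)$. Hence $\mathrm{int}\,\mathrm{epi}(f)\ne\emptyset$. We separate the point $(x,f(x))$ from this interior using the geometric Hahn--Banach theorem. The resulting functional has nonzero $t$-coefficient, since otherwise it would separate $x$ from the ball $B(x,\delta)$. After normalizing it yields a subgradient, so $\partial f(x)\neq\emptyset$.

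For boundedness, let $x^*\in\partial f(x)$ and $\|h\|<\delta$. Then $\langle h,x^*\rangle\le f(x+h)-f(x)\le M-f(x)$, so $\|x^*\|\le (M-f(x))/\delta$. Weak-$*$ compactness then follows from Banach--Alaoglu together with (i).

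For (iv) we use the directional derivative $f'(x;h)=\lim_{t\downarrow0}(f(x+th)-f(x))/t$. This limit exists because the difference quotient is monotone, by convexity. For $x$ in the interior of the domain it is finite and sublinear in $h$. Continuity at $x$ is the hypothesis under which (iv) is meaningful, and we state it as a standing assumption here.

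The key step, and the main obstacle, is the max formula $f'(x;h)=\max_{x^*\in\partial f(x)}\langle h,x^*\rangle$. The inequality $\ge$ is immediate from the definition of subgradient. For $\le$, fix $h_0$ and define the linear functional $\ell(sh_0)=s f'(x;h_0)$ on $\R h_0$. It is dominated there by the sublinear function $p=f'(x;\cdot)$. By the analytic Hahn--Banach theorem, $\ell$ extends to a linear $\ell\le p$ on all of $E$. Since $p(h)\le M'\|h\|$ near $0$, the extension is continuous. Finally, $f(x+h)-f(x)\ge f'(x;h)\ge\ell(h)$ shows $\ell\in\partial f(x)$.

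Given the max formula, (iv) follows quickly. If $\partial f(x)=\{x^*\}$, then $f'(x;h)=\langle h,x^*\rangle$ is linear in $h$, so $f$ is Gâteaux differentiable. Conversely, if $f$ is Gâteaux differentiable, then $f'(x;\cdot)$ is linear. Any subgradient satisfies $\langle h,x^*\rangle\le f'(x;h)$ for all $h$; applying this to $-h$ as well forces equality, so the subgradient is unique.

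For (v), the inclusion $\partial f(x)+\partial g(x)\subseteq\partial(f+g)(x)$ is immediate by adding the two subgradient inequalities. For the reverse, take $z^*\in\partial(f+g)(x)$ and translate so that $x=0$, $f(0)=g(0)=0$ and $z^*=0$. Then $f\ge -g$ everywhere.

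Now consider the convex sets $A=\mathrm{epi}(f)$ and $B=\{(y,t): t\le -g(y)\}$. Suppose $g$ is continuous at a point $y_0\in\dom f\cap\dom g$. Then $B$ has nonempty interior, and $f\ge-g$ implies $\mathrm{int}\,B\cap A=\emptyset$. Separating $A$ from $\mathrm{int}\,B$ gives a functional $(x^*,-\alpha)$. Continuity of $g$ at $y_0\in\dom f$ forces $\alpha\ne0$, by the same argument as in (ii). After normalizing, we get $f(y)\ge\langle y,x^*\rangle$ and $-g(y)\le\langle y,x^*\rangle$ for all $y$. Thus $x^*\in\partial f(0)$ and $-x^*\in\partial g(0)$, so $0=x^*+(-x^*)\in\partial f(0)+\partial g(0)$.

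The delicate point in (v) is ruling out a vertical separating hyperplane, i.e.\ $\alpha=0$. It is handled using the hypothesis that the point of continuity lies in $\dom(f)\cap\dom(g)$.
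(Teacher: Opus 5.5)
Your proposal is correct and follows the same architecture as the paper's proof: half-spaces for (i), epigraph separation for (ii), rewriting the subgradient inequality via the supremum for (iii), the max formula for (iv), and epigraph/hypograph separation for (v). Where the two differ, your version is the sounder one.

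\begin{itemize}
\item[(ii)] The paper infers $\alpha>0$ from $(x,f(x)+s)\in\mathrm{epi}(f)$, which only yields $\alpha\ge 0$. Your observation that a vertical hyperplane would separate $x$ from $B(x,\delta)$ is what actually excludes $\alpha=0$.

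\item[(iv)] The paper cites Phelps for $f'(x;h)=\sup_{x^*\in\partial f(x)}\langle h,x^*\rangle$, whereas you prove it by the analytic Hahn--Banach theorem. You also make explicit the continuity hypothesis this requires. That hypothesis is genuinely needed, not merely convenient. For the indicator $f$ of $\{y\in\ell^2:|y_n|\le 1/n\}$ one has $\partial f(0)=\{0\}$, yet $f(th)=+\infty$ for all $t>0$ when $h=(n^{-3/4})_n$. So the paper's unqualified (iv) tacitly relies on continuity as well (harmlessly in its application, since $P$ is Lipschitz).

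\item[(v)] This is the most important difference. The paper obtains $z^*-\tilde x^*\in\partial g(x)$ by ``subtracting'' $f(y)\ge f(x)+\langle y-x,\tilde x^*\rangle$ from $(f+g)(y)\ge(f+g)(x)+\langle y-x,z^*\rangle$. That is not a valid inference, since one cannot subtract two inequalities pointing the same way. The conclusion does hold for the particular $\tilde x^*$ produced by the separation, but only because of the half of the separating inequality that bounds the hypograph $B$. Reading $-x^*\in\partial g(0)$ off that half is precisely your argument.
\end{itemize}

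The details you leave implicit are routine. The separating constant is $0$ because $(0,0)\in A\cap B$. Inequalities obtained on $\mathrm{int}\,\mathrm{epi}(f)$ or $\mathrm{int}\,B$ pass to the closures, using $\overline{\mathrm{int}\,B}=\overline{B}$ for convex sets with nonempty interior.
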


\begin{proof}
Part (i): The subdifferential is the intersection $\partial f(x) = \bigcap_{y \in E}\{x^* : \langle y - x, x^* \rangle \leq f(y) - f(x)\}$. Each set in the intersection is a weak-$*$ closed half-space, so $\partial f(x)$ is convex and weak-$*$ closed.

Part (ii): We must show $\partial f(x) \neq \emptyset$ when $f$ is continuous at $x$. The epigraph $\mathrm{epi}(f) = \{(y,t) \in E \times \R : t \geq f(y)\}$ is convex. The point $(x, f(x))$ lies on the boundary of $\mathrm{epi}(f)$. By the supporting hyperplane theorem (a consequence of Hahn-Banach), there exists a continuous linear functional $\ell$ on $E \times \R$ such that $\ell(y,t) \geq \ell(x,f(x))$ for all $(y,t) \in \mathrm{epi}(f)$. Writing $\ell(y,t) = -\langle y, x^*\rangle + \alpha t$, the condition $\alpha > 0$ follows from the fact that $(x, f(x)+s) \in \mathrm{epi}(f)$ for $s > 0$. Normalizing $\alpha = 1$: $-\langle y, x^*\rangle + t \geq -\langle x, x^*\rangle + f(x)$ for all $(y,t) \in \mathrm{epi}(f)$. Setting $t = f(y)$: $f(y) - \langle y, x^*\rangle \geq f(x) - \langle x, x^*\rangle$, i.e., $f(y) \geq f(x) + \langle y-x, x^*\rangle$. Thus $x^* \in \partial f(x)$.

Boundedness: if $\partial f(x)$ were unbounded, there would exist $x^*_n \in \partial f(x)$ with $\|x^*_n\| \to \infty$. Choose $h_n$ with $\|h_n\| = 1$ and $\langle h_n, x^*_n\rangle \geq \|x^*_n\|/2$. Then $f(x + \epsilon h_n) \geq f(x) + \epsilon\|x^*_n\|/2 \to \infty$ for any $\epsilon > 0$, contradicting the continuity of $f$ at $x$. Weak-$*$ compactness follows from boundedness and weak-$*$ closedness (Banach-Alaoglu).

Part (iii): $x^* \in \partial f(x)$ means $f(y) - \langle y, x^*\rangle \geq f(x) - \langle x, x^*\rangle$ for all $y$. This is equivalent to $\sup_y\{\langle y, x^*\rangle - f(y)\} = \langle x, x^*\rangle - f(x)$, i.e., $f^*(x^*) = \langle x, x^*\rangle - f(x)$.

Part (iv): For convex $f$, the one-sided directional derivative $f'(x;h) = \lim_{t\to 0^+}\frac{f(x+th)-f(x)}{t}$ exists for all $h$ (as the infimum of difference quotients $t^{-1}[f(x+th)-f(x)]$ over $t > 0$, which is bounded below by $\langle h, x^*\rangle$ for any $x^* \in \partial f(x)$). The subdifferential is characterized by $\partial f(x) = \{x^* : \langle h, x^*\rangle \leq f'(x;h) \text{ for all } h\}$. Convexity gives $-f'(x;-h) \leq f'(x;h)$ for all $h$ (since $f(x) \leq \frac{1}{2}f(x+th)+\frac{1}{2}f(x-th)$ implies $f(x+th)-f(x) \geq -(f(x-th)-f(x))$, and dividing by $t$ and taking $t \to 0^+$ yields $f'(x;h) \geq -f'(x;-h)$). The support function of $\partial f(x)$ satisfies $\sup_{x^*\in\partial f(x)}\langle h,x^*\rangle = f'(x;h)$ (see Phelps \cite{Phelps1993}, Proposition~1.8). If $\partial f(x) = \{x^*\}$ is a singleton, then $f'(x;h) = \langle h,x^*\rangle$ for all $h$, so $f'(x;\cdot)$ is linear and continuous, giving G\^{a}teaux differentiability with derivative $x^*$. Conversely, if $f$ is G\^{a}teaux differentiable at $x$ with derivative $\ell$, then $f'(x;h) = \ell(h)$ for all $h$, and any $x^* \in \partial f(x)$ satisfies $\langle h,x^*\rangle \leq f'(x;h) = \ell(h)$ for all $h$, hence $\langle h,x^*\rangle = \ell(h)$ (applying also to $-h$), so $x^* = \ell$ and $\partial f(x) = \{\ell\}$.

Part (v): The inclusion $\partial f(x) + \partial g(x) \subset \partial(f+g)(x)$ is immediate: if $x^* \in \partial f(x)$ and $y^* \in \partial g(x)$, then $(f+g)(y) \geq f(x)+\langle y-x,x^*\rangle + g(x)+\langle y-x,y^*\rangle = (f+g)(x) + \langle y-x,x^*+y^*\rangle$ for all $y$.

For the reverse, let $z^* \in \partial(f+g)(x)$. We must find $x^* \in \partial f(x)$ with $z^*-x^* \in \partial g(x)$. Define $\varphi(y) = f(y) - \langle y, z^*\rangle$ and note that $\varphi(y) + g(y) \geq \varphi(x) + g(x)$ for all $y$ (this is the subdifferential condition for $f+g$ at $x$). The function $\varphi$ is convex and continuous at some point $x_0 \in \mathrm{dom}(f) \cap \mathrm{dom}(g)$ (by hypothesis). Consider the convex sets $A = \mathrm{epi}(\varphi)$ and $B = \{(y,t): t \leq -g(y) + \varphi(x)+g(x)\}$. These are convex, $A \cap \mathrm{int}(B) = \emptyset$ (since $\varphi(y) + g(y) \geq \varphi(x)+g(x)$), and $\mathrm{int}(B) \neq \emptyset$ (by continuity of $g$ or $f$). By Hahn-Banach separation, there exists a separating hyperplane yielding $x^* \in E^*$ with $\varphi(y) - \langle y, x^*\rangle \geq \varphi(x) - \langle x, x^*\rangle$ for all $y$. This gives $f(y) - \langle y, z^*+x^*\rangle \geq f(x) - \langle x, z^*+x^*\rangle$, so $z^*+x^* \in \partial f(x)$. Setting $\tilde x^* = z^*+x^*$, we get $\tilde x^* \in \partial f(x)$. For the reverse inclusion, since $z^* \in \partial(f+g)(x)$, we have $(f+g)(y) \geq (f+g)(x) + \langle y-x, z^*\rangle$ for all $y$. Since $\tilde x^* \in \partial f(x)$, we also have $f(y) \geq f(x) + \langle y-x, \tilde x^*\rangle$. Subtracting: $g(y) \geq g(x) + \langle y-x, z^*-\tilde x^*\rangle$ for all $y$, so $z^*-\tilde x^* \in \partial g(x)$. Thus $z^* = \tilde x^* + (z^*-\tilde x^*) \in \partial f(x) + \partial g(x)$.
\end{proof}

\subsection{Exposed Points and Faces}

Exposed points are the most accessible points of a convex set: each can be isolated by a single continuous linear functional. In the equilibrium state context, exposed points correspond to equilibrium states that can be selected by perturbing the potential.

\begin{definition}[Exposed Points]\label{def:exposed_points}
Let $C \subset E^*$ be a convex set. A point $x^* \in C$ is an exposed point of $C$ if there exists $x \in E$ such that $\langle x, x^* \rangle > \langle x, y^* \rangle$ for all $y^* \in C \setminus \{x^*\}$. The element $x$ is said to expose $x^*$.
\end{definition}

Every exposed point is an extreme point, but the converse fails in general. The set of exposed points is dense in the set of extreme points for weak-$*$ compact convex sets by the Straszewicz theorem.

\begin{definition}[Faces]\label{def:faces}
A face of a convex set $C$ is a convex subset $F \subset C$ such that whenever $x = \lambda y + (1-\lambda)z$ with $x \in F$, $y, z \in C$, and $\lambda \in (0, 1)$, we have $y, z \in F$. An exposed face is a face of the form $F = \{x^* \in C : \langle x, x^* \rangle = \sup_{y^* \in C} \langle x, y^* \rangle\}$ for some $x \in E$.
\end{definition}

\subsection{Application to Function and Measure Spaces}

Let $X$ be a compact metric space, let $E = C(X)$ be the Banach space of continuous real-valued functions with the supremum norm, and let $E^* = \calM(X)$ be the space of signed Radon measures with the total variation norm. The pairing is $\langle \phi, \mu \rangle = \int_X \phi \, d\mu$.

\begin{proposition}[Weak-$*$ Topology on Measures]\label{prop:weak_star_measures}
The weak-$*$ topology on $\calM(X)$ is the coarsest topology making $\mu \mapsto \int \phi \, d\mu$ continuous for all $\phi \in C(X)$. The space $\calP(X)$ of probability measures is weak-$*$ compact and metrizable.
\end{proposition}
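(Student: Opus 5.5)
The statement has three parts: the description of the weak-$*$ topology, weak-$*$ compactness of $\calP(X)$, and metrizability of $\calP(X)$. I would treat them in that order.

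\textbf{The topology.} The first assertion is essentially the definition. The weak-$*$ topology on $E^* = \calM(X)$, with $E = C(X)$, is by definition the initial topology generated by the evaluation maps $\mu \mapsto \langle \phi, \mu\rangle$ for $\phi \in E$. By the Riesz representation theorem these maps are exactly $\mu \mapsto \int \phi\,d\mu$. I would only note that this topology is the coarsest one making each such map continuous: a basis is given by finite intersections of preimages of open intervals.

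\textbf{Compactness.} By Banach--Alaoglu, the closed unit ball $B$ of $\calM(X)$ in total variation norm is weak-$*$ compact. The set $\calP(X)$ lies inside $B$, and I would write it as
\[
\calP(X) = B \cap \{\mu : \langle 1, \mu\rangle = 1\} \cap \bigcap_{\phi \geq 0} \{\mu : \langle \phi, \mu\rangle \geq 0\}.
\]
Each set in this intersection is a weak-$*$ closed half-space or hyperplane. Positivity of $\mu$ as a functional, together with $\mu(X)=1$, gives a probability measure by Riesz. So $\calP(X)$ is a closed subset of a compact set, hence compact.

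\textbf{Metrizability.} Since $X$ is a compact metric space, $C(X)$ is separable; this follows from Stone--Weierstrass applied to the algebra generated by the functions $d(\cdot, x_i)$ for a countable dense set $\{x_i\}$. Pick a dense sequence $(\phi_k)$ in the unit sphere of $C(X)$ and define
\[
\rho(\mu,\nu) = \sum_k 2^{-k} \left|\int \phi_k\,d\mu - \int \phi_k\,d\nu\right|.
\]
On $\calP(X)$ this is a metric. It separates points because a measure is determined by its integrals against a dense subset of $C(X)$, by linearity and continuity.

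To see that $\rho$ generates the weak-$*$ topology on $\calP(X)$, note that the identity map from $(\calP(X), \text{weak-}*)$ to $(\calP(X), \rho)$ is continuous, since each term of the series is weak-$*$ continuous and the series converges uniformly. A continuous bijection from a compact space to a Hausdorff space is a homeomorphism, which gives the claim. Alternatively, one can check directly that $\rho$-convergence implies convergence of $\int \phi\,d\mu$ for every $\phi$, using a $3\epsilon$ argument with $\|\mu\| = 1$.

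\textbf{Main obstacle.} The only non-formal step is the separability of $C(X)$. Everything else is the standard argument that bounded sets in the dual of a separable space are weak-$*$ metrizable, which I would cite to Phelps or Walters if brevity is preferred.
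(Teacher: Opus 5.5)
Your proposal is correct and follows the paper's route: the weak-$*$ topology as the initial topology, Banach--Alaoglu plus weak-$*$ closedness of the positivity and normalization constraints for compactness, and separability of $C(X)$ to build a countable-sum metric. The differences are cosmetic. You normalize the $\phi_k$ and prove that $\rho$ induces the weak-$*$ topology via the compact-to-Hausdorff continuous-bijection argument, whereas the paper uses the $\frac{t}{1+t}$-truncated metric and simply cites Parthasarathy for that step.
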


\begin{proof}
The weak-$*$ topology is the initial topology with respect to the family of maps $\{\mu \mapsto \int \phi\,d\mu\}_{\phi \in C(X)}$. Compactness of $\calP(X)$: by the Banach-Alaoglu theorem, the closed unit ball $B^* = \{\mu \in C(X)^* : \|\mu\| \leq 1\}$ is weak-$*$ compact. The set $\calP(X) = \{\mu \in B^* : \mu \geq 0,\; \mu(X) = 1\}$ is a weak-$*$ closed subset of $B^*$ (the conditions $\mu \geq 0$ and $\int 1\,d\mu = 1$ are each defined by weak-$*$ closed constraints), hence compact. Metrizability: since $X$ is a compact metric space, $C(X)$ is separable. Let $\{\phi_k\}_{k=1}^\infty$ be a countable dense subset of $C(X)$. The metric $d(\mu,\nu) = \sum_{k=1}^\infty 2^{-k}\frac{|\int\phi_k\,d\mu - \int\phi_k\,d\nu|}{1+|\int\phi_k\,d\mu - \int\phi_k\,d\nu|}$ metrizes the weak-$*$ topology on $\calP(X)$. See Parthasarathy \cite{Parthasarathy1967}, Chapter~II, Theorems~6.4 and~6.5.
\end{proof}

For a continuous map $T: X \to X$, the space $\calM_T(X)$ of $T$-invariant Borel probability measures is a weak-$*$ closed, hence compact, convex subset of $\calP(X)$.

\begin{proposition}[Extreme Points of Invariant Measures]\label{prop:ergodic_extreme}
The extreme points of $\calM_T(X)$ are precisely the ergodic measures: those $\mu \in \calM_T(X)$ such that every $T$-invariant measurable set has $\mu$-measure $0$ or $1$.
\end{proposition}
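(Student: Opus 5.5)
We prove the two inclusions separately, in both directions by contraposition.

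\emph{Ergodic implies extreme.} Let $\mu \in \calM_T(X)$ be ergodic, and suppose $\mu = \lambda \mu_1 + (1-\lambda)\mu_2$ with $\mu_1,\mu_2 \in \calM_T(X)$ and $\lambda \in (0,1)$. Then $\mu_1 \leq \lambda^{-1}\mu$, so $\mu_1 \ll \mu$. By Radon--Nikodym, $\mu_1 = f\,d\mu$ with $0 \le f \le \lambda^{-1}$.

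The key step is to show that $f$ is $T$-invariant $\mu$-a.e. We follow the standard argument (Walters, Theorem~6.10). For each $c \in \R$ set $E_c = \{f < c\}$. Invariance of $\mu_1$ gives $\int_{E_c} f\,d\mu = \int_{T^{-1}E_c} f\,d\mu$. Subtract the common part $E_c \cap T^{-1}E_c$ from both sides. Since $\mu(E_c \setminus T^{-1}E_c) = \mu(T^{-1}E_c \setminus E_c)$ by invariance of $\mu$, we can compare the two remainders: on $E_c \setminus T^{-1}E_c$ we have $f < c$, while on $T^{-1}E_c \setminus E_c$ we have $f \ge c$. Equality of the integrals then forces both difference sets to be $\mu$-null. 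Hence $E_c$ is invariant mod $\mu$, and ergodicity gives $\mu(E_c) \in \{0,1\}$ for every $c$.

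It follows that $f$ is a.e.\ constant. Since $\mu_1(X) = 1$, that constant is $1$, so $\mu_1 = \mu$ and likewise $\mu_2 = \mu$. Thus $\mu$ is extreme.

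The only delicate point is the measure-theoretic detail that a set invariant mod $\mu$ has measure $0$ or $1$. We handle this by replacing $E_c$ with the strictly invariant set $\bigcap_{n}\bigcup_{k\ge n}T^{-k}E_c$, which differs from $E_c$ by a $\mu$-null set.

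\emph{Extreme implies ergodic.} Suppose $\mu$ is not ergodic. Then there is a $T$-invariant measurable set $A$, meaning $T^{-1}A = A$, with $0 < \mu(A) < 1$. Define the conditional measures
\[
\mu_A(B) = \frac{\mu(B\cap A)}{\mu(A)}, \qquad \mu_{A^c}(B) = \frac{\mu(B\cap A^c)}{\mu(A^c)}.
\]
Each is a Borel probability measure. Each is $T$-invariant, because
\[
\mu(T^{-1}B \cap A) = \mu(T^{-1}(B\cap A)) = \mu(B\cap A),
\]
using $T^{-1}A = A$; the same computation works for $A^c$.

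Moreover $\mu = \mu(A)\,\mu_A + \mu(A^c)\,\mu_{A^c}$, and $\mu_A \neq \mu_{A^c}$ since $\mu_A(A) = 1$ while $\mu_{A^c}(A) = 0$. Hence $\mu$ is not extreme.

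Both directions are classical, and no result from the paper beyond the definition of $\calM_T(X)$ is needed. The main obstacle is the invariance of the density $f$ in the first direction; everything else is routine.
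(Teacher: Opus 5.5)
Your proof is correct. The direction ``not ergodic $\Rightarrow$ not extreme'' is the same as the paper's, which also conditions on $A$ and $X\setminus A$.

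In the direction ``ergodic $\Rightarrow$ extreme'', both arguments pass to the density $f = d\mu_1/d\mu$, but they justify its invariance differently. The paper writes $\int_{T^{-1}B} f\,d\mu = \int_B f\circ T\,d\mu$ and reads off $f\circ T = f$. Invariance of $\mu$ does not give that identity; change of variables gives $\int_{T^{-1}B} f\circ T\,d\mu = \int_B f\,d\mu$ instead. What invariance of $\mu_1$ directly says is that $f$ is fixed by the transfer operator $\hat T$ of $(T,\mu)$, the adjoint of the Koopman isometry $g\mapsto g\circ T$. For non-invertible $T$ an extra step is then needed, e.g.\ $\|f\circ T - f\|_{L^2(\mu)}^2 = 2\|f\|^2 - 2\langle f, \hat T f\rangle = 0$, using that $f$ is bounded.

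Your sublevel-set argument (Walters, Theorem~6.10) gets $\mu(\{f<c\}\,\triangle\,T^{-1}\{f<c\}) = 0$ directly from the invariance of $\mu$ and $\mu_1$. It needs no invertibility and no operator theory. You also make explicit the passage from invariance mod $\mu$ to a strictly invariant set via $\bigcap_n\bigcup_{k\ge n}T^{-k}E_c$. The paper uses that passage tacitly when it applies ergodicity to the a.e.-invariant function $f$. The paper's route is shorter to state, but yours is the one that is complete as written.
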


\begin{proof}
If $\mu$ is not ergodic, there exists a $T$-invariant set $A$ with $0 < \mu(A) < 1$. Define $\mu_1 = \mu(\cdot | A)$ and $\mu_2 = \mu(\cdot | X \setminus A)$. Both $\mu_1$ and $\mu_2$ are $T$-invariant (since $A$ is $T$-invariant, $T^{-1}A = A$, so for any Borel set $B$, $\mu_1(T^{-1}B) = \mu(T^{-1}B \cap A)/\mu(A) = \mu(T^{-1}(B\cap A))/\mu(A) = \mu(B\cap A)/\mu(A) = \mu_1(B)$ using $T$-invariance of both $\mu$ and $A$). Then $\mu = \mu(A) \mu_1 + (1 - \mu(A)) \mu_2$ is a non-trivial convex combination, so $\mu$ is not extreme.

Conversely, if $\mu$ is ergodic and $\mu = \lambda \nu_1 + (1-\lambda) \nu_2$ with $\nu_1, \nu_2 \in \calM_T(X)$ and $\lambda \in (0, 1)$, then $\nu_1 \ll \mu$ and $\nu_2 \ll \mu$. Let $f = d\nu_1/d\mu$ be the Radon-Nikodym derivative. Since both $\nu_1$ and $\mu$ are $T$-invariant, for any Borel set $B$, $\int_B f\,d\mu = \nu_1(B) = \nu_1(T^{-1}B) = \int_{T^{-1}B} f\,d\mu = \int_B f\circ T\,d\mu$, so $f = f \circ T$ $\mu$-a.e. By ergodicity, $f$ is constant $\mu$-a.e., so $\nu_1 = c\mu$ for some constant $c$. Since both are probability measures, $c = 1$, so $\nu_1 = \mu$. Similarly $\nu_2 = \mu$.
\end{proof}

\begin{proposition}[Ergodic Decomposition]\label{thm:ergodic_decomposition}
For any $\mu \in \calM_T(X)$, there exists a unique probability measure $\rho$ on the set $\calM_T^{\text{erg}}(X)$ of ergodic measures such that
\begin{equation}\label{eq:ergodic_decomposition}
\mu = \int_{\calM_T^{\text{erg}}(X)} \nu \, d\rho(\nu).
\end{equation}
The measure $\rho$ is concentrated on the ergodic measures and satisfies $h_\mu(T) = \int h_\nu(T) \, d\rho(\nu)$.
\end{proposition}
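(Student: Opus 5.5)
The plan has two parts: first obtain $\rho$ and prove its uniqueness by Choquet theory, then prove the entropy formula $h_\mu(T)=\int h_\nu(T)\,d\rho(\nu)$ by a separate ergodic-theoretic argument.

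\textbf{Existence of $\rho$.} By Proposition~\ref{prop:weak_star_measures}, $\calM_T(X)$ is a metrizable, weak-$*$ compact, convex subset of the locally convex space $\calM(X)$. Choquet's theorem for metrizable compact convex sets then gives, for each $\mu$, a probability measure $\rho$ on $\calM_T(X)$ with three properties:
\begin{itemize}
\item $\rho$ has barycenter $\mu$, i.e.\ $\int\phi\,d\mu=\int\!\int\phi\,d\nu\,d\rho(\nu)$ for all $\phi\in C(X)$;
\item $\rho$ is carried by the extreme points of $\calM_T(X)$;
\item the set of extreme points is a $G_\delta$ in this setting, so the statement is measurable.
\end{itemize}
Proposition~\ref{prop:ergodic_extreme} identifies these extreme points with $\calM_T^{\text{erg}}(X)$. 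This gives \eqref{eq:ergodic_decomposition} in the weak sense, and the identity extends to bounded Borel functions by a monotone class argument.

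As an alternative that avoids Choquet's theorem, I would keep a concrete construction in reserve. Take a countable dense set $\{\phi_k\}\subset C(X)$. By Birkhoff's theorem, for $\mu$-a.e.\ $x$ the empirical measures $\frac1n\sum_{j<n}\delta_{T^jx}$ converge weak-$*$ to some $\nu_x$, and one shows $\nu_x$ is ergodic for a.e.\ $x$. Setting $\rho=\mu\circ(x\mapsto\nu_x)^{-1}$ gives the decomposition.

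\textbf{Uniqueness of $\rho$.} Uniqueness amounts to $\calM_T(X)$ being a Choquet simplex. I would argue directly. Ergodic measures are mutually singular, since distinct ergodic $\nu,\nu'$ are supported on the disjoint invariant sets of $\nu$-generic and $\nu'$-generic points. Given any representing $\rho$, one recovers it from $\mu$ as follows: for the generic-point map $x\mapsto\nu_x$, the identity $\int\phi\,d\mu=\int\!\int\phi\,d\nu\,d\rho$ shows that $\mu(\{x:\nu_x\in A\})=\rho(A)$ for Borel $A\subset\calM_T^{\text{erg}}(X)$. The reason is that each ergodic $\nu$ gives full mass to its own generic points. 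Hence $\rho$ is determined by $\mu$.

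\textbf{Entropy formula (main obstacle).} This is the step I expect to be hardest. The plan, following Jacobs' theorem as in Walters:
\begin{enumerate}
\item Fix a finite Borel partition $\xi$.
\item Write $h_\mu(T,\xi)=\lim_n\frac1n H_\mu(\xi^n)$, where $\xi^n=\bigvee_{j<n}T^{-j}\xi$.
\item Note that $\nu\mapsto H_\nu(\xi^n)$ is measurable and that $H$ is concave.
\item Use the ``almost convexity'' bound $\sum_i\lambda_iH_{\nu_i}(\eta)\le H_{\sum\lambda_i\nu_i}(\eta)\le\sum_i\lambda_iH_{\nu_i}(\eta)+H(\lambda)$.
\end{enumerate}
The inequality $\int h_\nu(T,\xi)\,d\rho\le h_\mu(T,\xi)$ then follows from concavity. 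The reverse inequality is the delicate part, since $H(\lambda)$ can be infinite for a continuum decomposition. There I would apply the Shannon--McMillan--Breiman theorem: $-\frac1n\log\mu(\xi^n(x))\to E_\mu(f_\xi\mid\mathcal I)(x)$, and the conditional expectation given the invariant $\sigma$-algebra $\mathcal I$ equals $h_{\nu_x}(T,\xi)$. Integrating gives $h_\mu(T,\xi)=\int h_{\nu}(T,\xi)\,d\rho(\nu)$.

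It remains to take the supremum over $\xi$, which requires interchanging the supremum and the integral. I would handle this with a refining sequence of partitions $\xi_k$ with diameters tending to $0$, so that $h_\nu(T)=\lim_k h_\nu(T,\xi_k)$ monotonically for every $\nu$, and then apply monotone convergence. Two points need care here. To ensure $h_\nu(T,\xi_k)\to h_\nu(T)$ simultaneously for all $\nu$ without expansiveness, I would let the $\xi_k$ have diameter tending to $0$ and use the standard generating-sequence lemma. The other potential snag is boundary mass, which I would control by choosing partition boundaries of $\rho$-a.e.\ $\nu$-measure zero.
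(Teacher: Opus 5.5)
Your proposal is correct. It departs from the paper's proof at exactly the two points where the paper's argument is weakest.

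\textbf{Uniqueness.} The paper cites the Choquet-simplex property of $\calM_T(X)$ and justifies it by noting that the extreme points form a $G_\delta$. That fact holds in every metrizable compact convex set and only gives measurability, as in your third bullet; it does not give uniqueness. Your generic-point argument shows that $\rho$ must equal the push-forward of $\mu$ under $x\mapsto\nu_x$, because each ergodic $\nu$ gives full mass to its own generic points. This genuinely proves the simplex property. It also supplies the identity $\int h_{\nu_x}(T,\xi)\,d\mu(x)=\int h_\nu(T,\xi)\,d\rho(\nu)$ that you need later.

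\textbf{Entropy formula.} The paper asserts that $\nu\mapsto H_\nu(\bigvee_{k<n}T^{-k}\calU)$ is continuous and affine, and integrates it against $\rho$ at each fixed $n$. Partition entropy is strictly concave, not affine. For example, if $a,b$ are fixed points in different atoms, then $\mu=\tfrac12(\delta_a+\delta_b)$ has entropy $\log 2$ on every join, while both ergodic components give $0$. So at finite $n$ only $H_\mu(\xi^n)\ge\int H_\nu(\xi^n)\,d\rho$ holds. Equality appears only in the limit, for a reason the paper never supplies. Your split closes this: Jensen gives one inequality, and the non-ergodic Shannon--McMillan--Breiman theorem gives the other. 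Your increasing sequence $(\xi_k)$ with $\diam\xi_k\to0$ is generating for every $\nu$ at once. That is what legitimizes the final interchange of supremum and integral, which the paper's ``supremum over $\calU$ and monotone convergence'' leaves implicit. The paper's route buys brevity; yours is the one that actually works.

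Two remarks on your write-up. First, write $\xi^-=\bigvee_{j\ge1}T^{-j}\xi$ and $f_\xi=I_\mu(\xi\mid\xi^-)$. Identifying the SMB limit $E_\mu(f_\xi\mid\mathcal I)(x)$ with $h_{\nu_x}(T,\xi)$ is the real content of Jacobs' theorem, not a formality. Conditioning on $\mathcal I$ gives $\int I_\mu(\xi\mid\xi^-)\,d\nu_x$. This equals $h_{\nu_x}(T,\xi)$ because $H_\mu(\xi\mid\xi^-)=H_\mu(\xi\mid\xi^-\vee\mathcal I)$: the invariant $\sigma$-algebra carries no entropy, which lets you replace $\mu$'s conditional probabilities by those of $\nu_x$. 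Citing this is fine. Alternatively, the same fact gives the result without SMB, via $h_\mu(T,\xi)=\lim_n\frac1n H_\mu(\xi^n\mid\mathcal I)=\lim_n\frac1n\int H_\nu(\xi^n)\,d\rho(\nu)$.

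Second, your boundary-mass precaution is unnecessary. You never use weak-$*$ continuity of $\nu\mapsto H_\nu(\xi^n)$, only its Borel measurability.
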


\begin{proof}
The existence and uniqueness of $\rho$ follow from the Choquet representation theorem applied to the metrizable Choquet simplex $\calM_T(X)$. The simplex property (uniqueness of the representing measure) is a consequence of the fact that the extreme points $\calM_T^{\mathrm{erg}}(X)$ form a $G_\delta$ set in the compact metrizable space $\calM_T(X)$; see Phelps \cite{Phelps1993}, Chapter~4, or Walters \cite{Walters1982}, Chapter~6.

For the entropy decomposition $h_\mu(T) = \int h_\nu(T)\,d\rho(\nu)$, let $\calU$ be a finite measurable partition of $X$. For each $n \geq 1$, the map $\mu \mapsto H_\mu(\bigvee_{k=0}^{n-1}T^{-k}\calU) = -\sum_{A \in \bigvee_{k=0}^{n-1}T^{-k}\calU}\mu(A)\log\mu(A)$ is a continuous affine function on $\calM_T(X)$ (continuity uses the fact that each atom of $\bigvee_{k=0}^{n-1}T^{-k}\calU$ has boundary of $\mu$-measure zero for a residual set of measures; the affine property is immediate). By the barycentric formula for affine functions:
\begin{equation}
H_\mu\!\left(\bigvee_{k=0}^{n-1}T^{-k}\calU\right) = \int H_\nu\!\left(\bigvee_{k=0}^{n-1}T^{-k}\calU\right) d\rho(\nu).
\end{equation}
Dividing by $n$ and taking $n \to \infty$: the left side converges to $h_\mu(T,\calU)$ and the right side converges to $\int h_\nu(T,\calU)\,d\rho(\nu)$ by dominated convergence (since $0 \leq n^{-1}H_\nu(\bigvee_{k=0}^{n-1}T^{-k}\calU) \leq \log|\calU|$ uniformly in $\nu$). Taking the supremum over $\calU$ and applying monotone convergence gives $h_\mu(T) = \int h_\nu(T)\,d\rho(\nu)$.
\end{proof}

This theorem shows that $\calM_T(X)$ is a Choquet simplex: every point has a unique representation as a barycenter of extreme points. This structure is used in the analysis of equilibrium states in Subsection~\ref{sec:equilibrium_tangent}.


\section{Pressure as a Convex Functional}\label{sec:pressure_convex}

This section establishes the first pillar of the theory: the pressure functional is convex, Lipschitz, and normalized, and it equals the Legendre-Fenchel transform of the negative entropy. We begin with the definitions of topological entropy and pressure (Subsection~\ref{sec:pressure_convex}.1), prove convexity and Lipschitz continuity in Proposition~\ref{thm:pressure_convexity}, and then establish the variational principle (Propositions~\ref{thm:variational_upper} and~\ref{thm:variational_lower}). The main result of the section is Theorem~\ref{thm:pressure_conjugate}, which identifies the pressure as a convex conjugate and recovers the entropy as the biconjugate.

\subsection{Topological Entropy and Pressure}

\begin{definition}[Dynamical Metrics]\label{def:dynamical_metrics}
For $n \geq 1$, the $n$-th dynamical metric on $X$ is $d_n(x, y) = \max_{0 \leq k < n} d(T^k x, T^k y)$. A set $E \subset X$ is $(n, \epsilon)$-separated if $d_n(x, y) > \epsilon$ for all distinct $x, y \in E$. A set $F \subset X$ is $(n, \epsilon)$-spanning if for every $x \in X$, there exists $y \in F$ with $d_n(x, y) \leq \epsilon$.
\end{definition}

\begin{definition}[Topological Entropy]\label{def:topological_entropy}
Let $s_n(\epsilon)$ denote the maximum cardinality of an $(n, \epsilon)$-separated set, and let $r_n(\epsilon)$ denote the minimum cardinality of an $(n, \epsilon)$-spanning set. The topological entropy of $(X, T)$ is
\begin{equation}\label{eq:topological_entropy}
h_{\text{top}}(T) = \lim_{\epsilon \to 0} \limsup_{n \to \infty} \frac{1}{n} \log s_n(\epsilon) = \lim_{\epsilon \to 0} \limsup_{n \to \infty} \frac{1}{n} \log r_n(\epsilon).
\end{equation}
\end{definition}

The equality of the two limits follows from $r_n(\epsilon) \leq s_n(\epsilon) \leq r_n(\epsilon/2)$.

\begin{definition}[Partition Sums]\label{def:partition_sums}
For $\phi \in C(X)$, $n \geq 1$, and $\epsilon > 0$, define
\begin{equation}\label{eq:partition_sum_sep}
Q_n(\phi, \epsilon) = \sup\left\{ \sum_{x \in E} e^{S_n \phi(x)} : E \text{ is } (n, \epsilon)\text{-separated} \right\},
\end{equation}
where $S_n \phi(x) = \sum_{k=0}^{n-1} \phi(T^k x)$ is the $n$-th Birkhoff sum.
\end{definition}

\begin{definition}[Topological Pressure]\label{def:topological_pressure}
The topological pressure of $\phi$ is
\begin{equation}\label{eq:pressure_definition}
P(\phi) = \lim_{\epsilon \to 0} \limsup_{n \to \infty} \frac{1}{n} \log Q_n(\phi, \epsilon).
\end{equation}
\end{definition}

When $\phi = 0$, $P(0) = h_{\text{top}}(T)$.

\begin{proposition}[Basic Properties of Pressure]\label{prop:pressure_basic}
The pressure functional $P: C(X) \to \R$ satisfies:
\begin{enumerate}
\item[(i)] Monotonicity: If $\phi \leq \psi$ pointwise, then $P(\phi) \leq P(\psi)$.
\item[(ii)] Lipschitz continuity: $|P(\phi) - P(\psi)| \leq \|\phi - \psi\|_\infty$.
\item[(iii)] Translation invariance: $P(\phi + c) = P(\phi) + c$ for any constant $c$.
\item[(iv)] Cocycle invariance: $P(\phi) = P(\phi + u \circ T - u)$ for any $u \in C(X)$.
\item[(v)] Bounds: $\inf_X \phi + h_{\text{top}}(T) \leq P(\phi) \leq \sup_X \phi + h_{\text{top}}(T)$.
\end{enumerate}
\end{proposition}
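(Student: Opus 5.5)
All five properties will be proved directly from Definitions~\ref{def:partition_sums} and~\ref{def:topological_pressure}, working at the level of the partition sums $Q_n(\phi,\epsilon)$ for a fixed $(n,\epsilon)$. We then pass through $\limsup_n \frac1n\log$ and the limit $\epsilon\to0$, and both operations preserve the inequalities we obtain. The common mechanism is a pointwise comparison of Birkhoff sums $S_n\phi$ and $S_n\psi$ on each $(n,\epsilon)$-separated set $E$. This comparison transfers to the supremum defining $Q_n$.

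\emph{Monotonicity and translation.} For (i): if $\phi\le\psi$, then $S_n\phi\le S_n\psi$ pointwise. Hence $\sum_{x\in E}e^{S_n\phi(x)}\le\sum_{x\in E}e^{S_n\psi(x)}$ for every separated $E$, so $Q_n(\phi,\epsilon)\le Q_n(\psi,\epsilon)$, and the inequality passes to the limits. For (iii): $S_n(\phi+c)=S_n\phi+nc$, so $Q_n(\phi+c,\epsilon)=e^{nc}Q_n(\phi,\epsilon)$ exactly. Thus $\frac1n\log Q_n$ shifts by exactly $c$, and this commutes with $\limsup$ and with the $\epsilon$-limit.

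\emph{Lipschitz and bounds.} For (ii), we use $\psi-\|\phi-\psi\|_\infty\le\phi\le\psi+\|\phi-\psi\|_\infty$. Applying (i) and then (iii) gives $|P(\phi)-P(\psi)|\le\|\phi-\psi\|_\infty$. For (v), we apply the same sandwich to $\inf_X\phi\le\phi\le\sup_X\phi$. Using (i), (iii) and $P(0)=h_{\mathrm{top}}(T)$, we obtain $P(c)=c+h_{\mathrm{top}}(T)$. Here we need $P(0)=h_{\mathrm{top}}(T)$, which holds because $Q_n(0,\epsilon)=s_n(\epsilon)$ directly from the definitions.

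\emph{Cocycle invariance.} This is the only step needing a genuine estimate. The sum telescopes: $S_n(u\circ T-u)=u\circ T^n-u$, so $|S_n(\phi+u\circ T-u)-S_n\phi|\le 2\|u\|_\infty$ uniformly in $x$ and $n$. Therefore
\[
e^{-2\|u\|_\infty}Q_n(\phi,\epsilon)\le Q_n(\phi+u\circ T-u,\epsilon)\le e^{2\|u\|_\infty}Q_n(\phi,\epsilon).
\]
After taking $\frac1n\log$, the constant factors contribute $\pm 2\|u\|_\infty/n\to0$, which yields $P(\phi+u\circ T-u)=P(\phi)$. The only subtlety in the whole argument is whether $P$ could take the value $+\infty$. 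In that case the manipulations are valid in the extended reals, but we should check finiteness. Finiteness follows from (v) once $h_{\mathrm{top}}(T)<\infty$ is granted, which the paper implicitly assumes by declaring $P:C(X)\to\R$. Otherwise every step is a routine passage to the limit, and I expect no real obstacle.
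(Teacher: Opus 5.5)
Your proposal is correct and follows essentially the same route as the paper: comparing Birkhoff sums pointwise and transferring this to $Q_n(\phi,\epsilon)$, the exact identity $Q_n(\phi+c,\epsilon)=e^{nc}Q_n(\phi,\epsilon)$, the telescoping bound $2\|u\|_\infty$ for (iv), and (v) obtained from (i) and (iii). The only difference is cosmetic: you derive (ii) from (i) and (iii) via $\psi-\|\phi-\psi\|_\infty\le\phi\le\psi+\|\phi-\psi\|_\infty$, whereas the paper bounds $Q_n(\phi,\epsilon)$ between $e^{\mp n\|\phi-\psi\|_\infty}Q_n(\psi,\epsilon)$ directly, and your remark that finiteness of $h_{\mathrm{top}}(T)$ is tacitly assumed is a fair one.
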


\begin{proof}
Part (i): If $\phi \leq \psi$, then $S_n \phi \leq S_n \psi$, so $Q_n(\phi, \epsilon) \leq Q_n(\psi, \epsilon)$.

Part (ii): We have $|S_n \phi(x) - S_n \psi(x)| \leq n \|\phi - \psi\|_\infty$, so $e^{-n\|\phi - \psi\|_\infty} Q_n(\psi, \epsilon) \leq Q_n(\phi, \epsilon) \leq e^{n\|\phi - \psi\|_\infty} Q_n(\psi, \epsilon)$. Taking logarithms, dividing by $n$, and passing to limits gives $|P(\phi) - P(\psi)| \leq \|\phi - \psi\|_\infty$.

Part (iii): $S_n(\phi + c) = S_n \phi + nc$, so $Q_n(\phi + c, \epsilon) = e^{nc} Q_n(\phi, \epsilon)$, giving $P(\phi + c) = P(\phi) + c$.

Part (iv): $S_n(\phi + u \circ T - u)(x) = S_n \phi(x) + u(T^n x) - u(x)$, so the Birkhoff sums differ by at most $2\|u\|_\infty$ independently of $n$. Thus $e^{-2\|u\|_\infty}Q_n(\phi,\epsilon) \leq Q_n(\phi + u \circ T - u, \epsilon) \leq e^{2\|u\|_\infty}Q_n(\phi, \epsilon)$, which does not affect the exponential growth rate.

Part (v): The upper bound follows from (i) and (iii): $P(\phi) \leq P(\sup_X\phi) = \sup_X\phi + P(0) = \sup_X\phi + h_{\text{top}}(T)$, since the constant function $\sup_X\phi$ dominates $\phi$ pointwise. The lower bound follows similarly: $P(\phi) \geq P(\inf_X\phi) = \inf_X\phi + h_{\text{top}}(T)$, since $\phi$ dominates $\inf_X\phi$ pointwise.
\end{proof}

\subsection{Convexity of Pressure}

The pressure inherits convexity from its definition as a supremum of affine functionals. Combined with Lipschitz continuity and normalization, this makes $P$ amenable to the full machinery of Section~\ref{sec:convex_analysis}.

\begin{proposition}[Convexity of Pressure]\label{thm:pressure_convexity}
The pressure functional $P: C(X) \to \R$ is convex: for all $\phi, \psi \in C(X)$ and $\lambda \in [0, 1]$,
\begin{equation}\label{eq:pressure_convex}
P(\lambda \phi + (1-\lambda) \psi) \leq \lambda P(\phi) + (1-\lambda) P(\psi).
\end{equation}
\end{proposition}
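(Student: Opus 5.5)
The convexity will be proved directly from the partition-sum definition of Definition~\ref{def:topological_pressure}, via Hölder's inequality applied to separated sets. The variational principle has not yet been established at this point in the paper, so I avoid the supremum representation. Fix $\phi,\psi\in C(X)$, $\lambda\in(0,1)$ (the endpoints are trivial), $\epsilon>0$, $n\ge1$, and write $\eta=\lambda\phi+(1-\lambda)\psi$. Birkhoff sums are linear, so $S_n\eta=\lambda S_n\phi+(1-\lambda)S_n\psi$.

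For any $(n,\epsilon)$-separated set $E$, Hölder's inequality with exponents $p=1/\lambda$ and $q=1/(1-\lambda)$ gives
\[
\sum_{x\in E}e^{S_n\eta(x)}=\sum_{x\in E}\bigl(e^{S_n\phi(x)}\bigr)^{\lambda}\bigl(e^{S_n\psi(x)}\bigr)^{1-\lambda}\le\Bigl(\sum_{x\in E}e^{S_n\phi(x)}\Bigr)^{\lambda}\Bigl(\sum_{x\in E}e^{S_n\psi(x)}\Bigr)^{1-\lambda}.
\]
The same set $E$ is admissible in the suprema defining $Q_n(\phi,\epsilon)$ and $Q_n(\psi,\epsilon)$, so the right side is bounded by $Q_n(\phi,\epsilon)^\lambda Q_n(\psi,\epsilon)^{1-\lambda}$. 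Taking the supremum over $E$ yields
\[
\log Q_n(\eta,\epsilon)\le\lambda\log Q_n(\phi,\epsilon)+(1-\lambda)\log Q_n(\psi,\epsilon).
\]

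I then divide by $n$ and take $\limsup_{n\to\infty}$. Subadditivity of $\limsup$ for sums, together with positive homogeneity, preserves the inequality. All quantities are finite, since $Q_n(\phi,\epsilon)\le e^{n\|\phi\|_\infty}s_n(\epsilon)$ and $s_n(\epsilon)<\infty$ by compactness, so no $\infty-\infty$ issue arises. Finally I let $\epsilon\to0$. The quantities $\limsup_n\frac1n\log Q_n(\cdot,\epsilon)$ are monotone nonincreasing in $\epsilon$, because shrinking $\epsilon$ enlarges the class of separated sets, so the limits exist in the extended sense. Since $P$ is real-valued (Proposition~\ref{prop:pressure_basic}(v)), the limits are finite and add correctly, which gives \eqref{eq:pressure_convex}.

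The main obstacle is the bookkeeping in passing to the limits: $\limsup$ is only subadditive, not additive. Fortunately the inequality runs in the favourable direction, so subadditivity suffices, and the finiteness needed for the $\epsilon\to0$ step is exactly the bound from Proposition~\ref{prop:pressure_basic}(v). If $h_{\mathrm{top}}(T)=+\infty$ were allowed, the statement would be vacuous, but the paper takes $P:C(X)\to\R$ throughout. A secondary check is that Hölder is applied with $E$ finite, which holds because separated sets in a compact space are finite.
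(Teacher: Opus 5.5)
Your proposal is correct and is essentially the paper's proof: Hölder's inequality with exponents $1/\lambda$ and $1/(1-\lambda)$ on each $(n,\epsilon)$-separated set gives $Q_n(\lambda\phi+(1-\lambda)\psi,\epsilon)\le Q_n(\phi,\epsilon)^{\lambda}Q_n(\psi,\epsilon)^{1-\lambda}$, and you then take logarithms, divide by $n$, and pass to the limits. Your extra bookkeeping fleshes out the paper's ``passing to limits,'' though the cleanest reason no $\infty-\infty$ arises is the lower bound $\frac{1}{n}\log Q_n(\phi,\epsilon)\ge-\|\phi\|_\infty$ (a single point is separated), rather than the finiteness of each $Q_n$.
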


\begin{proof}
Let $\phi_\lambda = \lambda \phi + (1-\lambda) \psi$. For any $(n, \epsilon)$-separated set $E$,
\begin{equation}
\sum_{x \in E} e^{S_n \phi_\lambda(x)} = \sum_{x \in E} \left( e^{S_n \phi(x)} \right)^\lambda \left( e^{S_n \psi(x)} \right)^{1-\lambda}.
\end{equation}
By H\"{o}lder's inequality with exponents $1/\lambda$ and $1/(1-\lambda)$,
\begin{equation}
\sum_{x \in E} \left( e^{S_n \phi(x)} \right)^\lambda \left( e^{S_n \psi(x)} \right)^{1-\lambda} \leq \left( \sum_{x \in E} e^{S_n \phi(x)} \right)^\lambda \left( \sum_{x \in E} e^{S_n \psi(x)} \right)^{1-\lambda}.
\end{equation}
Taking the supremum over $(n, \epsilon)$-separated sets: $Q_n(\phi_\lambda, \epsilon) \leq Q_n(\phi, \epsilon)^\lambda \cdot Q_n(\psi, \epsilon)^{1-\lambda}$. Taking logarithms, dividing by $n$, and passing to limits: $P(\phi_\lambda) \leq \lambda P(\phi) + (1-\lambda) P(\psi)$.
\end{proof}

\subsection{The Variational Principle}

The variational principle identifies the pressure as the Legendre-Fenchel transform of the negative entropy. The upper bound (Proposition~\ref{thm:variational_upper}) follows from the Gibbs inequality; the lower bound (Proposition~\ref{thm:variational_lower}) constructs measures that nearly achieve the supremum. Together they establish the duality $P = S^*$, which Theorem~\ref{thm:pressure_conjugate} then strengthens to the biconjugate recovery $S = P^*$.

\begin{lemma}[Gibbs Inequality]\label{lem:gibbs_inequality_abstract}
For any probability distribution $(p_1,\ldots,p_k)$ and real numbers $(a_1,\ldots,a_k)$,
\begin{equation}
\sum_{i=1}^k p_i(a_i - \log p_i) \leq \log\sum_{i=1}^k e^{a_i},
\end{equation}
with equality if and only if $p_i = e^{a_i}/\sum_j e^{a_j}$ for all $i$.
\end{lemma}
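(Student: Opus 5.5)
The plan is to treat the inequality as a concavity statement, namely that the function
\[
F(p) = \sum_i p_i a_i - \sum_i p_i \log p_i
\]
on the probability simplex is maximized at the Gibbs distribution $q_i = e^{a_i}/Z$, where $Z = \sum_j e^{a_j}$. Concretely, I would rewrite the difference $\log Z - F(p)$ as a relative entropy and then apply Jensen's inequality to the strictly concave function $\log$. Throughout I use the convention $0\log 0 = 0$, so indices with $p_i = 0$ contribute nothing to the left-hand side.

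\textbf{Step 1.} Let $I = \{i : p_i > 0\}$. Since $\log p_i - a_i = \log p_i - \log q_i - \log Z$ and $\sum_{i\in I} p_i = 1$, I would write
\[
\log Z - \sum_{i\in I} p_i(a_i - \log p_i) = \sum_{i\in I} p_i \log\frac{p_i}{q_i} = -\sum_{i\in I} p_i \log\frac{q_i}{p_i}.
\]
\textbf{Step 2.} Applying Jensen's inequality to $\log$ with weights $p_i$, $i \in I$, gives
\[
\sum_{i\in I} p_i \log\frac{q_i}{p_i} \leq \log\sum_{i\in I} q_i \leq \log 1 = 0.
\]
The right-hand side of Step 1 is therefore nonnegative, which is exactly the claimed inequality.

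\textbf{Step 3 (equality case).} This is the only delicate point, and the obstacle is really just bookkeeping of the zero entries. Equality forces two things:
\begin{enumerate}
\item[(a)] equality in $\sum_{i\in I} q_i \leq 1$; since every $q_i > 0$, this holds only if $I = \{1,\ldots,k\}$;
\item[(b)] equality in Jensen's inequality; by strict concavity of $\log$, this holds only if $q_i/p_i$ equals a constant $c$ on $I$.
\end{enumerate}
Summing $q_i = c\,p_i$ over $I$ gives $c = 1$, so $p = q$. Conversely, substituting $p_i = q_i$ gives $a_i - \log p_i = \log Z$ for every $i$, so the left-hand side equals $\log Z$ and equality holds.

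Alternatively, the same argument can be phrased through the elementary bound $\log x \leq x - 1$, which has equality exactly at $x = 1$. Applying it with $x = q_i/p_i$ avoids quoting Jensen's inequality.
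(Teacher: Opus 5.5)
Your proof is correct and follows the paper's route: you write $\log Z$ minus the left-hand side as the relative entropy $\sum_i p_i\log(p_i/q_i)$ against the Gibbs distribution $q_i = e^{a_i}/Z$, then apply Jensen's inequality to $\log$. Restricting to the support $I=\{i : p_i>0\}$ also makes the equality case rigorous, whereas the paper's step $\sum_i p_i(q_i/p_i)=1$ quietly ignores zero entries of $p$.
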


\begin{proof}
Define $q_i = e^{a_i}/\sum_j e^{a_j}$. Then $\sum_i p_i(a_i - \log p_i) = \sum_i p_i\log(q_i/p_i) + \log\sum_j e^{a_j}$. The first term is the negative Kullback-Leibler divergence $-D_{\mathrm{KL}}(p\|q) \leq 0$. Indeed, by Jensen's inequality applied to the strictly convex function $-\log$:
\begin{equation}
-\sum_i p_i \log\frac{q_i}{p_i} = \sum_i p_i \log\frac{p_i}{q_i} \geq -\log\sum_i p_i\frac{q_i}{p_i} = -\log 1 = 0,
\end{equation}
with equality if and only if $p_i/q_i$ is constant, i.e., $p_i = q_i$ for all $i$.
\end{proof}

\begin{proposition}[Variational Upper Bound]\label{thm:variational_upper}
For any $\phi \in C(X)$ and any $\mu \in \calM_T(X)$,
\begin{equation}\label{eq:variational_upper_bound}
h_\mu(T) + \int_X \phi \, d\mu \leq P(\phi).
\end{equation}
\end{proposition}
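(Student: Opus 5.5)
We follow the classical Misiurewicz argument, as in Walters \cite{Walters1982}, Theorem~9.10. The first step is a reduction: the weak-$*$ metric-independence of the inequality lets us fix a single finite Borel partition $\xi = \{A_1,\ldots,A_k\}$ of $X$ and show $h_\mu(T,\xi) + \int\phi\,d\mu \leq P(\phi)$. Taking the supremum over $\xi$ then gives \eqref{eq:variational_upper_bound}. By regularity of $\mu$, we choose compact sets $B_j \subset A_j$ with $\mu(A_j\setminus B_j)$ small. We then form the partition $\beta = \{B_0, B_1,\ldots,B_k\}$ with $B_0 = X\setminus\bigcup_j B_j$, arranged so that $H_\mu(\xi\mid\beta) \leq 1$. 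Since $h_\mu(T,\xi) \leq h_\mu(T,\beta) + H_\mu(\xi\mid\beta)$, it suffices to bound $h_\mu(T,\beta) + \int\phi\,d\mu$ by $P(\phi) + $ constant. Let $\delta>0$ be less than half the minimal distance between the compact sets $B_1,\ldots,B_k$. Each open $\delta$-ball then meets the closures of at most two elements of $\beta$.

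The second step is the core estimate. Write $\beta^n = \bigvee_{i=0}^{n-1}T^{-i}\beta$. For each atom $C\in\beta^n$, pick $x_C \in \overline{C}$ maximizing $S_n\phi$ on $\overline C$. Apply the Gibbs inequality (Lemma~\ref{lem:gibbs_inequality_abstract}) with $p_C = \mu(C)$ and $a_C = S_n\phi(x_C)$:
\begin{equation*}
H_\mu(\beta^n) + \int S_n\phi\,d\mu \leq \sum_C \mu(C)\bigl(S_n\phi(x_C) - \log\mu(C)\bigr) \leq \log\sum_{C\in\beta^n} e^{S_n\phi(x_C)}.
\end{equation*}
By $T$-invariance, $\int S_n\phi\,d\mu = n\int\phi\,d\mu$.

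The third step, which is the main obstacle, is comparing this sum with $Q_n(\phi,\epsilon)$. Let $\epsilon<\delta$ be small enough that $d(x,y)\le\epsilon$ implies $|\phi(x)-\phi(y)|\le 1$ (uniform continuity). Take a maximal $(n,\epsilon)$-separated set $E$; it is $(n,\epsilon)$-spanning. For each $C$ there is $y\in E$ with $d_n(x_C,y)\le\epsilon$, so $S_n\phi(x_C)\le S_n\phi(y)+n$. Conversely, the $d_n$-ball of radius $\epsilon$ about any $y\in E$ meets at most $2^n$ atoms of $\beta^n$, by the choice of $\delta$ at each time. Hence
\begin{equation*}
\sum_C e^{S_n\phi(x_C)} \leq 2^n e^{n}\sum_{y\in E}e^{S_n\phi(y)} \leq 2^n e^n Q_n(\phi,\epsilon).
\end{equation*}
The constant $\log 2 + 1$ is the nuisance term here. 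Using the oscillation bound $\mathrm{var}(\phi,\epsilon)$ in place of $1$, this becomes $\log 2 + \mathrm{var}(\phi,\epsilon)$.

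The final step puts everything together. Dividing by $n$ and letting $n\to\infty$, then $\epsilon\to0$, gives
\begin{equation*}
h_\mu(T,\xi)+\int\phi\,d\mu \leq P(\phi) + \log 2 + 1.
\end{equation*}
To remove the constant, apply this bound to $T^m$ with potential $S_m\phi$, using $h_\mu(T^m) = m\,h_\mu(T)$ and $P_{T^m}(S_m\phi) = m P_T(\phi)$. The latter is a standard comparison of separated sets: $(nm,\epsilon)$-separated sets for $T$ versus $(n,\epsilon')$-separated sets for $T^m$, using uniform continuity of $T,\ldots,T^{m-1}$. This yields
\begin{equation*}
m\bigl(h_\mu(T)+\int\phi\,d\mu\bigr) \leq mP(\phi) + \log 2 + 1.
\end{equation*}
Dividing by $m$ and letting $m\to\infty$ completes the proof.
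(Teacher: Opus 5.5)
Your proof is correct. It is the standard Misiurewicz argument (\cite{Walters1982}, Theorem~9.10), and it departs from the paper exactly at the step you flagged as the main obstacle.

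The paper applies the Gibbs inequality directly to a partition $\alpha$ with $\diam(\alpha)<\epsilon$. It then asserts $\log\sum_{A\in\alpha^{(n)}}e^{\sup_A S_n\phi}\le\log Q_n(\phi,\epsilon)+C$, with $C$ depending only on $\epsilon$, on the grounds that each atom of $\alpha^{(n)}$ fits in an $(n,\epsilon)$-ball. Finally it lets $\epsilon\to0$, and it never passes to $T^m$.

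That justification controls the size of each atom, but not the multiplicity, i.e.\ how many atoms are charged to a single point of a separated set. This multiplicity is not bounded in $n$ in general:
\begin{itemize}
\item for an irrational rotation with a partition into short arcs, it grows linearly in $n$;
\item for Borel partitions with irregular boundaries (already on the full shift), it can grow exponentially, and then the claimed bound fails even after dividing by $n$.
\end{itemize}
So the paper's shortcut is only safe when Bowen balls meet a uniformly bounded number of atoms, as for clopen cylinder partitions of a subshift.

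Your compact sets $B_j$ are what make the count tractable. Each $\delta$-ball meets at most two closures, so the multiplicity is at most $2^n$. The price is an additive $\log 2$ in the growth rate, plus fixed constants from $H_\mu(\xi\mid\beta)$ and the oscillation of $\phi$, and the power trick removes all of these. Your route costs an extra step, but it works for every continuous map of a compact metric space and every finite Borel partition $\xi$, with no generating or boundary-regularity assumption.

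Minor points:
\begin{itemize}
\item Once $H_\mu(\xi\mid\beta)\le1$ is added, your constant is $\log2+2$ rather than $\log2+1$. This is harmless, since it does not depend on $m$.
\item The power trick only needs $h_\mu(T^m)\ge m\,h_\mu(T)$ and $P_{T^m}(S_m\phi)\le mP_T(\phi)$. The latter is immediate: an $(n,\epsilon)$-separated set for $T^m$ is $(nm,\epsilon)$-separated for $T$, and $\sum_{k<n}S_m\phi\circ T^{mk}=S_{nm}\phi$. So no uniform continuity of $T,\ldots,T^{m-1}$ is needed.
\item The reduction to a fixed $\xi$ is simply $h_\mu(T)=\sup_\xi h_\mu(T,\xi)$; the phrase about ``weak-$*$ metric-independence'' plays no role.
\end{itemize}
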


\begin{proof}
Let $\alpha$ be a finite measurable partition of $X$ with $\diam(\alpha) < \epsilon$. For $n \geq 1$, consider the join $\alpha^{(n)} = \bigvee_{k=0}^{n-1}T^{-k}\alpha$. Each atom $A$ of $\alpha^{(n)}$ has diameter at most $\epsilon$ in the dynamical metric $d_n$. By the Gibbs inequality (Lemma~\ref{lem:gibbs_inequality_abstract}) applied to $p_A = \mu(A)$ and $a_A = \sup_{x \in A}S_n\phi(x)$:
\begin{equation}
\sum_{A \in \alpha^{(n)}} \mu(A)\left(\sup_A S_n\phi - \log\mu(A)\right) \leq \log\sum_{A\in\alpha^{(n)}}e^{\sup_A S_n\phi}.
\end{equation}
The left side equals $H_\mu(\alpha^{(n)}) + \sum_A \mu(A)\sup_A S_n\phi \geq H_\mu(\alpha^{(n)}) + \int S_n\phi\,d\mu$. The right side satisfies $\log\sum_A e^{\sup_A S_n\phi} \leq \log Q_n(\phi,\epsilon) + C$ for a constant $C$ depending on $\epsilon$ (since each atom of $\alpha^{(n)}$ can be covered by an $(n,\epsilon)$-ball). Therefore:
\begin{equation}
H_\mu(\alpha^{(n)}) + n\int\phi\,d\mu \leq \log Q_n(\phi,\epsilon) + C.
\end{equation}
Dividing by $n$ and taking $n \to \infty$: $h_\mu(T,\alpha) + \int\phi\,d\mu \leq \limsup_{n\to\infty}\frac{1}{n}\log Q_n(\phi,\epsilon)$. Taking $\epsilon \to 0$ (which allows $\alpha$ to generate) and using $h_\mu(T) = \sup_\alpha h_\mu(T,\alpha)$: $h_\mu(T) + \int\phi\,d\mu \leq P(\phi)$.
\end{proof}

\begin{definition}[Empirical Measures]\label{def:empirical_measures}
For $x \in X$ and $n \geq 1$, the empirical measure is $\delta_x^{(n)} = \frac{1}{n} \sum_{k=0}^{n-1} \delta_{T^k x}$.
\end{definition}

\begin{proposition}[Variational Lower Bound]\label{thm:variational_lower}
For any $\phi \in C(X)$,
\begin{equation}\label{eq:variational_principle}
P(\phi) = \sup_{\mu \in \calM_T(X)} \left\{ h_\mu(T) + \int_X \phi \, d\mu \right\}.
\end{equation}
\end{proposition}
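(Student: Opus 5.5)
The inequality $\sup_\mu\{h_\mu(T)+\int\phi\,d\mu\}\le P(\phi)$ is exactly Proposition~\ref{thm:variational_upper}, so only the reverse inequality needs proof. I will use Misiurewicz's construction. Fix $\epsilon>0$ and, for each $n$, choose an $(n,\epsilon)$-separated set $E_n$ with
\[
\sum_{x\in E_n}e^{S_n\phi(x)}\ge \tfrac12 Q_n(\phi,\epsilon).
\]
Define the Gibbs-weighted atomic measure
\[
\sigma_n=\frac{\sum_{x\in E_n}e^{S_n\phi(x)}\delta_x}{\sum_{x\in E_n}e^{S_n\phi(x)}}
\]
and its Ces\`aro average $\mu_n=\frac1n\sum_{k=0}^{n-1}\sigma_n\circ T^{-k}$; equivalently, $\mu_n$ is the $\sigma_n$-average of the empirical measures of Definition~\ref{def:empirical_measures}. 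Next choose a subsequence $n_j$ along which $\frac1{n_j}\log Q_{n_j}(\phi,\epsilon)$ tends to the $\limsup$ and $\mu_{n_j}\to\mu$ weak-$*$. Such a subsequence exists by compactness of $\calP(X)$ (Proposition~\ref{prop:weak_star_measures}). The limit $\mu$ is $T$-invariant, because $\mu_n\circ T^{-1}-\mu_n=\frac1n(\sigma_n\circ T^{-n}-\sigma_n)$ has total variation at most $2/n$. Also $\int\phi\,d\mu_n=\frac1n\int S_n\phi\,d\sigma_n$, and this passes to the limit since $\phi$ is continuous.

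The key step is the entropy estimate. Choose a finite partition $\alpha$ with $\diam(\alpha)<\epsilon$ and $\mu(\partial A)=0$ for every $A\in\alpha$. Such a partition exists by taking balls whose boundary spheres are $\mu$-null, which holds for all but countably many radii. Each atom of $\alpha^{(n)}$ then contains at most one point of $E_n$. The equality case of Lemma~\ref{lem:gibbs_inequality_abstract} therefore gives
\[
H_{\sigma_n}(\alpha^{(n)})+\int S_n\phi\,d\sigma_n=\log\sum_{x\in E_n}e^{S_n\phi(x)}.
\]
Now fix $q<n$. Splitting $\{0,\dots,n-1\}$ into $q$ residue classes and blocks of length $q$, and using subadditivity of $H$, gives for each offset $j$
\[
H_{\sigma_n}(\alpha^{(n)})\le \sum_{r}H_{\sigma_n\circ T^{-(rq+j)}}(\alpha^{(q)})+2q\log|\alpha|.
\]
Summing over $j=0,\dots,q-1$ and using concavity of $\nu\mapsto H_\nu$ then yields
\[
\frac{q}{n}\log\sum_{x\in E_n}e^{S_n\phi(x)}\le H_{\mu_n}(\alpha^{(q)})+q\int\phi\,d\mu_n+\frac{2q^2}{n}\log|\alpha|.
\]
Let $n=n_j\to\infty$. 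Because $\mu$ gives null mass to the boundaries of the atoms of $\alpha^{(q)}$, we have $H_{\mu_{n_j}}(\alpha^{(q)})\to H_\mu(\alpha^{(q)})$. Hence
\[
q\limsup_n\tfrac1n\log Q_n(\phi,\epsilon)\le H_\mu(\alpha^{(q)})+q\int\phi\,d\mu.
\]
Dividing by $q$ and letting $q\to\infty$ gives
\[
\limsup_n\tfrac1n\log Q_n(\phi,\epsilon)\le h_\mu(T,\alpha)+\int\phi\,d\mu\le \sup_\nu\Bigl\{h_\nu(T)+\int\phi\,d\nu\Bigr\}.
\]
Finally let $\epsilon\to0$.

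I expect the block-decomposition inequality to be the main obstacle. Care is needed with the boundary blocks (the $2q\log|\alpha|$ term) and with checking that concavity of entropy is applied to the correct averaged measures. A second subtle point is that the partition $\alpha$ must be chosen after $\mu$, so that its boundaries are $\mu$-null. This is legitimate because the separated sets, and hence $\mu$, do not depend on $\alpha$.
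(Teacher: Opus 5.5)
Your proof is correct. It is Misiurewicz's argument (as in Walters, Theorem~8.6), and the details hold up. This includes choosing $\alpha$ after $\mu$ so that the atoms of $\alpha^{(q)}$ have $\mu$-null boundaries, which uses the $T$-invariance of $\mu$. It also includes dropping the harmless $\frac{\log 2}{n}$ term that comes from $\sum_{x\in E_n}e^{S_n\phi(x)}\ge\frac12 Q_n(\phi,\epsilon)$.

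The paper builds the same measures $\mu_n$ but shortcuts the entropy step. It asserts $H_{\mu_n}(\alpha^{(n)})\ge -\sum_x w_x\log w_x$ directly for the averaged measure $\mu_n$, which is only guaranteed for the atomic measure $\sigma_n$. It then passes to the limit by citing upper semi-continuity of entropy (Proposition~\ref{thm:entropy_usc}).

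Your route differs exactly there. You use the Gibbs equality only for $\sigma_n$, where the separation makes it exact. You move the information to $\mu_n$ on a fixed partition $\alpha^{(q)}$ via the block decomposition and concavity of $\nu\mapsto H_\nu$. After that you need only continuity of $\nu\mapsto H_\nu(\alpha^{(q)})$ at $\mu$ for one finite partition.

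This buys two things. First, $\frac1n H_{\mu_n}(\alpha^{(n)})$ is not the entropy of an invariant measure, so upper semi-continuity of $\nu\mapsto h_\nu(T)$ does not control its limit. Second, the appendix result is proved only for expansive systems, whereas the proposition is stated for arbitrary continuous $T$ on compact $X$. Your block argument supplies the missing limit passage at the stated generality. It is the same grouping device the paper itself uses later in the proof of Theorem~\ref{thm:specification_equilibrium}(i).
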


\begin{proof}
The upper bound $\sup_\mu\{h_\mu(T)+\int\phi\,d\mu\} \leq P(\phi)$ is Theorem~\ref{thm:variational_upper}. For the lower bound, fix $\epsilon > 0$. For each $n \geq 1$, let $E_n$ be an $(n, \epsilon)$-separated set with $\sum_{x \in E_n}e^{S_n\phi(x)} \geq \frac{1}{2}Q_n(\phi,\epsilon)$. Define the probability measure
\begin{equation}
\mu_n = \frac{1}{Z_n} \sum_{x \in E_n} e^{S_n \phi(x)} \delta_x^{(n)}, \quad Z_n = \sum_{x \in E_n} e^{S_n \phi(x)}.
\end{equation}
By compactness of $\calP(X)$, some subsequence $\mu_{n_k} \to \mu$ weak-$*$. Since $\|\mu_n - T_*\mu_n\| \leq 2/n$ (the empirical average $\delta_x^{(n)}$ satisfies $\int f\,d\delta_x^{(n)} - \int f\circ T\,d\delta_x^{(n)} = n^{-1}(f(x)-f(T^nx))$, so $\|(\delta_x^{(n)})_* - T_*(\delta_x^{(n)})\| \to 0$), the limit $\mu$ is $T$-invariant.

For the entropy estimate, let $\alpha$ be a partition with $\diam(\alpha) < \epsilon/2$. Each point of $E_n$ lies in a distinct atom of $\alpha^{(n)}$ (since $E_n$ is $(n,\epsilon)$-separated and atoms have $d_n$-diameter $< \epsilon$). The weights $w_x = e^{S_n\phi(x)}/Z_n$ define a probability distribution on $E_n$, and the conditional entropy satisfies:
\begin{equation}
H_{\mu_n}(\alpha^{(n)}) \geq -\sum_{x \in E_n}w_x\log w_x = \log Z_n - \frac{1}{Z_n}\sum_{x\in E_n}e^{S_n\phi(x)}S_n\phi(x).
\end{equation}
Also $\int S_n\phi\,d\mu_n = \frac{1}{Z_n}\sum_x e^{S_n\phi(x)}\cdot\frac{1}{n}\sum_{j=0}^{n-1}\phi(T^jx)\cdot n = \frac{n}{Z_n}\sum_x e^{S_n\phi(x)}\int\phi\,d\delta_x^{(n)}$. Combining and dividing by $n$:
\begin{equation}
\frac{1}{n}H_{\mu_n}(\alpha^{(n)}) + \int\phi\,d\mu_n \geq \frac{1}{n}\log Z_n \geq \frac{1}{n}\log Q_n(\phi,\epsilon) - \frac{\log 2}{n}.
\end{equation}
Using the upper semi-continuity of entropy (Appendix~\ref{app:entropy}) and continuity of $\mu \mapsto \int\phi\,d\mu$, passing to the limit $n_k \to \infty$: $h_\mu(T) + \int\phi\,d\mu \geq \limsup_n\frac{1}{n}\log Q_n(\phi,\epsilon)$. Taking $\epsilon \to 0$: $\sup_\mu\{h_\mu(T)+\int\phi\,d\mu\} \geq P(\phi)$.
\end{proof}

\begin{theorem}[Pressure as Convex Conjugate]\label{thm:pressure_conjugate}
Define the entropy functional $S: \calM_T(X) \to [-\infty, 0]$ by $S(\mu) = -h_\mu(T)$. Then:
\begin{equation}\label{eq:pressure_as_conjugate_thm}
P(\phi) = S^*(\phi) = \sup_{\mu \in \calM_T(X)} \left\{ \int \phi \, d\mu + h_\mu(T) \right\},
\end{equation}
and the entropy is recovered as the conjugate of the pressure:
\begin{equation}\label{eq:entropy_as_conjugate_thm}
S(\mu) = P^*(\mu) = \sup_{\phi \in C(X)} \left\{ \int \phi \, d\mu - P(\phi) \right\}.
\end{equation}
\end{theorem}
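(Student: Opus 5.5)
The first identity \eqref{eq:pressure_as_conjugate_thm} is the variational principle rewritten. Extend $S$ to all of $\calM(X)$ by $S(\mu)=-h_\mu(T)$ on $\calM_T(X)$ and $S=+\infty$ elsewhere. Then, by Definition~\ref{def:legendre_fenchel} applied on the dual pair $(\calM(X),C(X))$,
\[
S^*(\phi)=\sup_{\mu\in\calM_T(X)}\bigl\{\langle\phi,\mu\rangle+h_\mu(T)\bigr\},
\]
and Proposition~\ref{thm:variational_lower} gives $S^*=P$. This step is routine.

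For \eqref{eq:entropy_as_conjugate_thm} I would use Fenchel--Moreau (Lemma~\ref{thm:LF_properties}(iv)) on the dual side, in the weak-$*$ topology. Since $P=S^*$, we have $P^*=S^{**}$, so it suffices to show that the extended $S$ is proper, convex and weak-$*$ lower semi-continuous on $\calM(X)$. The pairing $(C(X),\calM(X))$ with the weak-$*$ topology is a dual pair, so the Hahn--Banach separation argument in the proof of Lemma~\ref{thm:LF_properties}(iv) applies verbatim with the roles of the spaces exchanged. The required properties of $S$ are obtained as follows.
\begin{itemize}
\item \emph{Convexity.} The map $\mu\mapsto h_\mu(T)$ is affine on $\calM_T(X)$; this follows from the ergodic decomposition formula in Proposition~\ref{thm:ergodic_decomposition}, or directly from the standard estimate $0\le H_{t\mu+(1-t)\nu}(\alpha)-tH_\mu(\alpha)-(1-t)H_\nu(\alpha)\le\log 2$ applied to $\alpha^{(n)}$, divided by $n$. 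Together with convexity of $\calM_T(X)$, this makes the extended $S$ convex.
\item \emph{Lower semi-continuity.} We need $S$ weak-$*$ l.s.c., i.e.\ $\mu\mapsto h_\mu(T)$ upper semi-continuous on $\calM_T(X)$, together with closedness of $\calM_T(X)$ in $\calM(X)$.
\item \emph{Properness.} $S>-\infty$ provided $h_\mu(T)<\infty$, which holds when $h_{\text{top}}(T)<\infty$.
\end{itemize}

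The main obstacle is the lower semi-continuity, since upper semi-continuity of entropy fails for general continuous $T$: it needs expansiveness, or more generally asymptotic $h$-expansiveness. I would therefore invoke Appendix~\ref{app:entropy}, as the proof of Proposition~\ref{thm:variational_lower} does, and state the biconjugate identity under whatever hypothesis that appendix provides. Without such a hypothesis I would prove the weaker statement that $P^*$ equals the upper semi-continuous concave hull of $-S$, namely $-P^*(\mu)=\inf_{\phi}\{P(\phi)-\int\phi\,d\mu\}=\limsup_{\nu\to\mu}h_\nu(T)$, which coincides with $-S$ exactly when entropy is u.s.c. Finiteness of $h_{\text{top}}$ is needed in either case; otherwise $S\equiv-\infty$ somewhere and both sides degenerate.

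It remains to check directly that $P^*(\mu)=+\infty$ off $\calM_T(X)$, so that $S^{**}$ and the extended $S$ agree there as well. I would do this by the three cases used for the Universal Variational Principle.
\begin{itemize}
\item If $\mu(X)\ne1$, take $\phi=c$ constant. By Proposition~\ref{prop:pressure_basic}(iii), $\int c\,d\mu-P(c)=c(\mu(X)-1)-h_{\text{top}}(T)$, which is unbounded in $c$.
\item If $\mu\not\ge0$, choose $\psi\ge0$ with $\int\psi\,d\mu<0$ and take $\phi=-t\psi$. Monotonicity gives $P(-t\psi)\le P(0)$, while $\int\phi\,d\mu=-t\int\psi\,d\mu\to+\infty$.
\item If $\mu$ is not invariant, pick $u$ with $\int(u\circ T-u)\,d\mu\ne0$ and take $\phi=t(u\circ T-u)$. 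Cocycle invariance, Proposition~\ref{prop:pressure_basic}(iv), gives $P(\phi)=P(0)$, so the supremum is $+\infty$.
\end{itemize}
On $\calM_T(X)$, the inequality $P^*(\mu)\ge S(\mu)$ also follows directly from Proposition~\ref{thm:variational_upper} via Fenchel--Young.
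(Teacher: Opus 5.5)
Your argument follows the paper's proof. The first identity is the variational principle (Proposition~\ref{thm:variational_lower}) once $S$ is extended by $+\infty$ off $\calM_T(X)$. The second is a biconjugation argument whose easy half comes from Proposition~\ref{thm:variational_upper}.

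You depart from the paper in scope, and there you are right. The paper's Step~2 simply asserts that $\mu\mapsto h_\mu(T)$ is upper semi-continuous for an arbitrary continuous $T$, and then invokes a concave Fenchel--Moreau theorem. That assertion is false in general. In fact, when $h_{\text{top}}(T)<\infty$, upper semi-continuity at $\mu$ is equivalent to $h_\mu(T)=\inf_{\phi}\{P(\phi)-\int\phi\,d\mu\}$ (Walters \cite{Walters1982}, Chapter~9).

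You can see directly that the unrestricted statement cannot hold:
\begin{itemize}
\item $\partial P(\phi)\neq\emptyset$ for every $\phi$, because $P$ is real-valued, convex and continuous.
\item If $P^*=S$ held on $\calM_T(X)$, your three-case argument would make every element of $\partial P(\phi)$ an equilibrium state, so every continuous potential would have one.
\item But there are systems with finite topological entropy and no measure of maximal entropy (Misiurewicz's examples, or one-point compactifications of disjoint unions of subshifts with increasing entropies).
\end{itemize}
So the correct formulation of the theorem is the one you give: prove the identity under the upper semi-continuity supplied by Appendix~\ref{app:entropy} (expansiveness), and prove only the envelope formula $-P^*(\mu)=\limsup_{\nu\to\mu}h_\nu(T)$ otherwise.

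One slip: your last sentence has the inequality backwards. Proposition~\ref{thm:variational_upper} gives $\int\phi\,d\mu-P(\phi)\le -h_\mu(T)$ for every $\phi$. Hence $P^*(\mu)\le S(\mu)$; this is the paper's Step~1, equivalently the general fact $S^{**}\le S$. The inequality $P^*(\mu)\ge S(\mu)$ is the hard half, and it is exactly the one that needs upper semi-continuity. Your own envelope formula already shows this, since it only yields $-P^*(\mu)\ge h_\mu(T)$.
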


\begin{proof}
The first equality is the variational principle (Theorem~\ref{thm:variational_lower}).

For the second equality, we verify $S(\mu) = P^*(\mu)$ in two steps.

\textbf{Step 1} ($P^*(\mu) \leq S(\mu)$): For any $\phi \in C(X)$, the variational upper bound gives $h_\mu(T) + \int\phi\,d\mu \leq P(\phi)$, so $\int\phi\,d\mu - P(\phi) \leq -h_\mu(T) = S(\mu)$. Taking the supremum over $\phi$: $P^*(\mu) \leq S(\mu)$.

\textbf{Step 2} ($P^*(\mu) \geq S(\mu)$): We must show $\sup_\phi\{\int\phi\,d\mu - P(\phi)\} \geq -h_\mu(T)$. For any $\psi \in C(X)$ and $t \in \R$, the normalization $P(t\psi) \leq t\sup_X\psi + h_{\mathrm{top}}(T)$ (Proposition~\ref{prop:pressure_basic}(v)) gives $\int t\psi\,d\mu - P(t\psi) \geq t\int\psi\,d\mu - t\sup_X\psi - h_{\mathrm{top}}(T)$. Taking $\psi = 0$: $P^*(\mu) \geq 0 - P(0) = -h_{\mathrm{top}}(T)$. For a sharper bound, we use the Fenchel-Moreau theorem. The functional $-S(\mu) = h_\mu(T)$ is upper semi-continuous and concave (affine on ergodic decompositions). Its concave conjugate is $(- S)_*(\phi) = \inf_\mu\{\int\phi\,d\mu + h_\mu(T)\}$, and by the Fenchel-Moreau theorem for concave u.s.c.\ functions on compact convex sets (see Phelps \cite{Phelps1993}, Corollary~5.9), the biconjugate recovers the original: $h_\mu(T) = \inf_\phi\{P(\phi) - \int\phi\,d\mu\}$, which rearranges to $\sup_\phi\{\int\phi\,d\mu - P(\phi)\} = -h_\mu(T) = S(\mu)$.
\end{proof}

The duality between pressure and entropy has a clean geometric reading.
The pressure $P(\phi_0 + t\psi)$, viewed as a function of the real
parameter $t$, is convex (Theorem~\ref{thm:pressure_convexity}). At any
value of $t$ where this function is differentiable, the slope of the
unique tangent line equals $\int\psi\,d\mu_{\phi_0+t\psi}$, the
expectation of $\psi$ under the unique equilibrium state
(Theorem~\ref{thm:differentiability}). Convexity forces the entire
curve to lie above each tangent line. At a point of
non-differentiability, multiple supporting lines with different slopes
correspond to distinct equilibrium states coexisting at the same
potential: a first-order phase transition
(Theorem~\ref{thm:phase_transition_equiv}).

The negative entropy $S(\mu) = -h_\mu(T)$ is the convex conjugate of
$P$ restricted to the invariant measures
(Theorem~\ref{thm:pressure_conjugate}). As a conjugate of a convex function, $S$ is concave and attains its maximum at $-h_{\mathrm{top}}(T)$, where $h_{\mathrm{top}}(T)$ is the topological entropy. When the measure of maximal entropy is unique, we denote it by $\mu_{\mathrm{mme}}$; it corresponds to the potential $\phi = 0$. The biconjugate recovery $P = S^*$ closes the
duality: every property of the pressure is encoded in the entropy and
vice versa. Figure~\ref{fig:legendre_duality} illustrates this
correspondence.

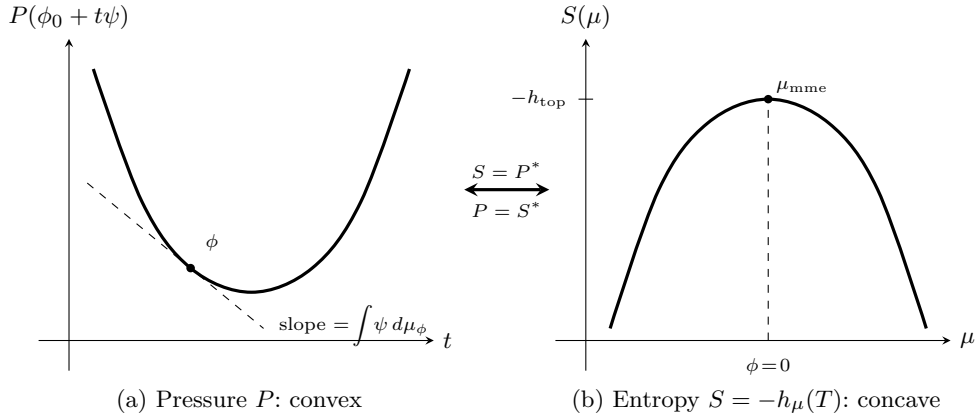
\begin{figure}[ht]
\centering
\begin{tikzpicture}[>=stealth, font=\small, scale=0.80]

\begin{scope}[xshift=0cm]
  \draw[->] (-0.5,0) -- (6.0,0) node[right] {$t$};
  \draw[->] (0,-0.5) -- (0,5.0) node[above] {$P(\phi_0 + t\psi)$};
  
  \draw[very thick] plot[smooth, tension=0.6] coordinates 
    {(0.4,4.5) (1.2,2.3) (2.0,1.2) (3.0,0.8) (4.0,1.2) (4.8,2.3) (5.6,4.5)};
  
  \draw[thin, dashed] (0.3,2.61) -- (3.2,0.20);
  
  \fill (2.0,1.2) circle (2pt);
  \node[above right=2pt, font=\footnotesize] at (2.0,1.25) {$\phi$};
  
  \node[right, font=\footnotesize] at (3.3,0.25) 
    {slope $= \!\int\!\psi\,d\mu_\phi$};
  
  
  \node[font=\small] at (2.8,-1.0) {(a) Pressure $P$: convex};
\end{scope}

\draw[<->, very thick] (6.5,2.5) -- (7.9,2.5);
\node[above=1pt, font=\footnotesize] at (7.2,2.5) {$S = P^*$};
\node[below=1pt, font=\footnotesize] at (7.2,2.5) {$P = S^*$};

\begin{scope}[xshift=8.5cm]
  \draw[->] (-0.5,0) -- (6.0,0) node[right] {$\mu$};
  \draw[->] (0,-0.5) -- (0,5.0) node[above] {$S(\mu)$};
  
  \draw[very thick] plot[smooth, tension=0.6] coordinates 
    {(0.4,0.2) (1.2,2.5) (2.0,3.6) (3.0,4.0) (4.0,3.6) (4.8,2.5) (5.6,0.2)};
  
  \fill (3.0,4.0) circle (2pt);
  
  \draw[thin, dashed] (3.0,0) -- (3.0,4.0);
  
  \node[right=4pt, font=\footnotesize] at (2.8,4.2) {$\mu_{\mathrm{mme}}$};
  
  \draw[thin] (-0.12,4.0) -- (0.12,4.0);
  \node[left, font=\footnotesize] at (-0.15,4.0) {$-h_{\mathrm{top}}$};
  
  \node[below=3pt, font=\footnotesize] at (3.0,0) {$\phi\!=\!0$};
  
  \node[font=\small] at (2.8,-1.0) {(b) Entropy $S = -h_\mu(T)$: concave};
\end{scope}

\end{tikzpicture}
\caption{The Legendre-Fenchel duality between pressure and entropy (Theorem~\ref{thm:pressure_conjugate}). (a)~The pressure $P(\phi_0 + t\psi)$ is convex; at a differentiable point, the unique tangent line has slope $\int\psi\,d\mu_\phi$, corresponding to the unique equilibrium state $\mu_\phi$. The curve lies entirely above the tangent, as required by convexity. (b)~The negative entropy $S(\mu) = -h_\mu(T)$ is concave and attains its maximum $-h_{\mathrm{top}}(T)$ at the measure of maximal entropy $\mu_{\mathrm{mme}}$ (corresponding to $\phi = 0$). The biconjugate recovery $S = P^*$, $P = S^*$ establishes the complete duality.}
\label{fig:legendre_duality}
\end{figure}


\subsection{Subdifferential Characterization and Structure}\label{sec:equilibrium_tangent}

With the duality $P = S^*$ established, the subdifferential calculus of Lemma~\ref{thm:subdiff_properties} immediately characterizes equilibrium states as elements of $\partial P(\phi)$. This subsection develops that characterization and its structural consequences: convexity, compactness, and the ergodic decomposition of the set of equilibrium states.

\begin{theorem}[Equilibrium States as Subdifferential]\label{thm:equilibrium_subdiff}
Let $(X, T)$ be a topological dynamical system with $X$ compact and $T$ continuous. For $\phi \in C(X)$, the following conditions on $\mu \in \calM_T(X)$ are equivalent:
\begin{enumerate}
\item[(i)] $\mu$ is an equilibrium state for $\phi$: $h_\mu(T) + \int \phi \, d\mu = P(\phi)$.
\item[(ii)] $\mu \in \partial P(\phi)$: for all $\psi \in C(X)$, $P(\psi) \geq P(\phi) + \int (\psi - \phi) \, d\mu$.
\item[(iii)] The pair $(\phi, \mu)$ satisfies the Fenchel-Young equality: $P(\phi) + S(\mu) = \int \phi \, d\mu$.
\item[(iv)] $\mu$ achieves the supremum in the variational formula.
\end{enumerate}
\end{theorem}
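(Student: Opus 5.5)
The plan is to run everything through the variational principle (Proposition~\ref{thm:variational_lower}) and the variational upper bound (Proposition~\ref{thm:variational_upper}); no convex duality beyond these is needed. The equivalences (i)$\Leftrightarrow$(iii)$\Leftrightarrow$(iv) are bookkeeping, and the real content is (i)$\Leftrightarrow$(ii).

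\textbf{Step 1: (i)$\Leftrightarrow$(iii)$\Leftrightarrow$(iv).} Condition (iii) reads $P(\phi) - h_\mu(T) = \int\phi\,d\mu$, which is just (i) rearranged, using $S(\mu) = -h_\mu(T)$. One caveat: if $h_\mu(T)=+\infty$, neither side can hold, since the variational upper bound forces $h_\mu(T) \le P(\phi) - \int \phi\,d\mu < \infty$. For (iv), Proposition~\ref{thm:variational_lower} gives that the supremum equals $P(\phi)$. So $\mu$ attains the supremum exactly when $h_\mu(T)+\int\phi\,d\mu = P(\phi)$, which is (i).

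\textbf{Step 2: (i)$\Rightarrow$(ii).} Assume $h_\mu(T) = P(\phi) - \int\phi\,d\mu$. For any $\psi \in C(X)$, Proposition~\ref{thm:variational_upper} applied to $\psi$ gives
\[
P(\psi) \ge h_\mu(T) + \int\psi\,d\mu = P(\phi) + \int(\psi-\phi)\,d\mu,
\]
which is the subgradient inequality.

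\textbf{Step 3: (ii)$\Rightarrow$(i).} This is the main obstacle, since it must produce an entropy lower bound from a purely functional inequality. Assume $\mu \in \partial P(\phi)$. Rearranging (ii), we get $\int\phi\,d\mu - P(\phi) \ge \int\psi\,d\mu - P(\psi)$ for all $\psi$. Hence $\int\phi\,d\mu - P(\phi) = P^*(\mu)$; this is also Lemma~\ref{thm:subdiff_properties}(iii), the Fenchel--Young equality for $P$. Now invoke the biconjugate recovery of Theorem~\ref{thm:pressure_conjugate}, $P^*(\mu) = S(\mu) = -h_\mu(T)$ for $\mu\in\calM_T(X)$. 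This gives $\int\phi\,d\mu - P(\phi) = -h_\mu(T)$, which is (i).

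The delicate point is that the recovery $S = P^*$ depends on upper semi-continuity of $\mu\mapsto h_\mu(T)$ through the Fenchel--Moreau argument. So I would state explicitly that Step 3 uses the second half of Theorem~\ref{thm:pressure_conjugate}, or add an entropy-upper-semicontinuity hypothesis (e.g.\ asymptotic $h$-expansiveness) where that theorem requires it. The implication (i)$\Rightarrow$(ii) needs only the variational upper bound and holds unconditionally.
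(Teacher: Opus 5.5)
Your proposal follows the paper's proof essentially step for step: (i)$\Leftrightarrow$(iii) by rearranging with $S(\mu)=-h_\mu(T)$, and (i)$\Leftrightarrow$(iv) from the variational principle. Likewise, (i)$\Rightarrow$(ii) comes from applying the variational upper bound to $\psi$, and (ii)$\Rightarrow$(i) from the Fenchel--Young equality of Lemma~\ref{thm:subdiff_properties}(iii) combined with $P^*=S$ from Theorem~\ref{thm:pressure_conjugate}.

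Your caveat is well founded and more careful than the paper, whose proof of Theorem~\ref{thm:pressure_conjugate} simply asserts that $\mu\mapsto h_\mu(T)$ is upper semi-continuous. Without that property (ii)$\Rightarrow$(i) can genuinely fail. Since $P$ is continuous and convex, $\partial P(\phi)$ is always a non-empty subset of $\calM_T(X)$. Yet there are systems with finite topological entropy and no equilibrium state at all, e.g.\ $\phi=0$ on a system with no measure of maximal entropy.
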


\begin{proof}
$(i) \Leftrightarrow (iii)$: The Fenchel-Young equality $P(\phi) + S(\mu) = \langle \phi, \mu \rangle$ becomes $P(\phi) - h_\mu(T) = \int \phi \, d\mu$, which is $h_\mu(T) + \int \phi \, d\mu = P(\phi)$.

$(i) \Leftrightarrow (iv)$: This is the definition of equilibrium state combined with the variational principle.

$(i) \Rightarrow (ii)$: Suppose $h_\mu(T) + \int \phi \, d\mu = P(\phi)$. For any $\psi \in C(X)$, Theorem~\ref{thm:variational_upper} gives $h_\mu(T) + \int \psi \, d\mu \leq P(\psi)$. Subtracting the equilibrium condition: $\int (\psi - \phi) \, d\mu \leq P(\psi) - P(\phi)$, which is the subdifferential inequality.

$(ii) \Rightarrow (i)$: Assume $\mu \in \partial P(\phi)$: $P(\psi) \geq P(\phi) + \int(\psi-\phi)\,d\mu$ for all $\psi \in C(X)$. By Theorem~\ref{thm:subdiff_properties}(iii), this is equivalent to the Fenchel-Young equality $P(\phi) + P^*(\mu) = \int\phi\,d\mu$. Since $P^*(\mu) = S(\mu) = -h_\mu(T)$ (Theorem~\ref{thm:pressure_conjugate}), this gives $P(\phi) - h_\mu(T) = \int\phi\,d\mu$, i.e., $h_\mu(T) + \int\phi\,d\mu = P(\phi)$, which is condition (i).
\end{proof}

\begin{corollary}[Convexity and Compactness]\label{cor:equilibrium_convex}
For any $\phi \in C(X)$, the set $\partial P(\phi)$ of equilibrium states is convex, weak-$*$ closed, weak-$*$ compact, and non-empty.
\end{corollary}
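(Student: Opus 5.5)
Convexity and weak-$*$ closedness come straight from Lemma~\ref{thm:subdiff_properties}(i) applied to the convex functional $P$ on $E = C(X)$; Proposition~\ref{thm:pressure_convexity} supplies the convexity and Proposition~\ref{prop:pressure_basic}(ii) the finiteness. Since $\partial P(\phi)$ is an intersection of the weak-$*$ closed half-spaces $\{\nu : \int(\psi-\phi)\,d\nu \le P(\psi)-P(\phi)\}$, it is convex and weak-$*$ closed. Because $P$ is $1$-Lipschitz, it is continuous everywhere, so Lemma~\ref{thm:subdiff_properties}(ii) gives that $\partial P(\phi)$ is non-empty, bounded and weak-$*$ compact.

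The statement concerns $\partial P(\phi)$ as \emph{the set of equilibrium states}, so the main task is to check that every element of $\partial P(\phi) \subset \calM(X)$ lies in $\calM_T(X)$. Theorem~\ref{thm:equilibrium_subdiff} is only phrased for $\mu \in \calM_T(X)$. I will prove this directly from the basic properties of $P$.

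\textbf{Positivity.} Take $\psi = \phi - g$ with $g \ge 0$. Monotonicity gives $P(\phi-g) \le P(\phi)$, and the subgradient inequality gives $-\int g\,d\nu \le P(\phi-g)-P(\phi) \le 0$. Hence $\nu \ge 0$.

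\textbf{Normalization.} Take $\psi = \phi + c$. Proposition~\ref{prop:pressure_basic}(iii) gives $c\,\nu(X) \le c$ for all $c\in\R$, so $\nu(X)=1$.

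\textbf{Invariance.} Take $\psi = \phi + t(u\circ T - u)$ with $t\in\R$. Cocycle invariance, Proposition~\ref{prop:pressure_basic}(iv), gives $t\int(u\circ T-u)\,d\nu \le 0$ for all $t$. Therefore $\int u\circ T\,d\nu = \int u\,d\nu$ for every $u\in C(X)$, i.e.\ $T_*\nu=\nu$.

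Thus $\partial P(\phi)\subset\calM_T(X)$, and by Theorem~\ref{thm:equilibrium_subdiff} it coincides exactly with the set of equilibrium states.

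Non-emptiness could alternatively be obtained from upper semi-continuity of $\mu\mapsto h_\mu(T)$ (Appendix~\ref{app:entropy}): $h_\mu + \int\phi\,d\mu$ is then u.s.c.\ on the compact set $\calM_T(X)$ and attains its supremum. That supremum is $P(\phi)$ by Proposition~\ref{thm:variational_lower}. However, entropy need not be u.s.c.\ without expansiveness, so I would rely on the Hahn--Banach route through Lemma~\ref{thm:subdiff_properties}(ii), which needs only continuity of $P$. Compactness then follows because $\partial P(\phi)$ is a weak-$*$ closed subset of the compact set $\calP(X)$ (Proposition~\ref{prop:weak_star_measures}).

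The main obstacle is this identification $\partial P(\phi)\subset\calM_T(X)$ together with the use of $P^*=S$ in Theorem~\ref{thm:equilibrium_subdiff}. The direction (ii)$\Rightarrow$(i) relies on the biconjugate identity $P^*(\mu) = -h_\mu(T)$ from Theorem~\ref{thm:pressure_conjugate}, which in general requires $h$ to be u.s.c. I would accept that identity as stated earlier in the paper and note the dependence explicitly.
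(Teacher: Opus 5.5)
Your argument follows the paper's proof: Lemma~\ref{thm:subdiff_properties}(i) gives convexity and weak-$*$ closedness, and Lemma~\ref{thm:subdiff_properties}(ii), via continuity of $P$, gives non-emptiness, boundedness and compactness. Your positivity/normalization/invariance check that every subgradient lies in $\calM_T(X)$ supplies a step the paper leaves implicit here; it runs the same three-case argument only in the proof of Theorem~\ref{thm:universal_variational}. Your flag on $P^*=S$ is well placed, and it also governs non-emptiness: the Hahn--Banach route gives $\partial P(\phi)\neq\emptyset$ unconditionally, but concluding that an \emph{equilibrium state} exists still passes through (ii)$\Rightarrow$(i) of Theorem~\ref{thm:equilibrium_subdiff}. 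So neither route avoids upper semi-continuity of $\mu\mapsto h_\mu(T)$, and without it the identification genuinely fails: systems with no measure of maximal entropy have $\partial P(0)\neq\emptyset$ but no equilibrium state for $\phi=0$.
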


\begin{proof}
Convexity and weak-$*$ closedness follow from Theorem~\ref{thm:subdiff_properties}(i). Weak-$*$ compactness follows from the boundedness of $\partial P(\phi)$ (Theorem~\ref{thm:subdiff_properties}(ii), since $P$ is Lipschitz hence continuous) combined with weak-$*$ closedness. Non-emptiness: the function $\mu \mapsto h_\mu(T) + \int\phi\,d\mu$ is upper semi-continuous on the weak-$*$ compact set $\calM_T(X)$ (the entropy is u.s.c.\ by Theorem~\ref{thm:entropy_usc} for expansive systems, and in general by Theorem~\ref{thm:subdiff_properties}(ii) which guarantees $\partial P(\phi) \neq \emptyset$ for continuous convex $P$).
\end{proof}

\begin{corollary}[Ergodic Decomposition of Equilibrium States]\label{cor:equilibrium_ergodic}
The extreme points of $\partial P(\phi)$ are precisely the ergodic equilibrium states. Every equilibrium state is a convex combination (or integral) of ergodic equilibrium states.
\end{corollary}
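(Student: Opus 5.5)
The plan is to show that $\partial P(\phi)$ is a \emph{face} of the simplex $\calM_T(X)$. Once that is established, both claims follow from the structure results of Section~\ref{sec:convex_analysis}.

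\textbf{Step 1: $\partial P(\phi)$ is a face.} Suppose $\mu\in\partial P(\phi)$ and $\mu=\lambda\nu_1+(1-\lambda)\nu_2$ with $\nu_i\in\calM_T(X)$ and $\lambda\in(0,1)$. By Theorem~\ref{thm:equilibrium_subdiff}, $\mu$ is an equilibrium state. The map $\nu\mapsto h_\nu(T)+\int\phi\,d\nu$ is affine on $\calM_T(X)$: linearity of the integral is clear, and the entropy is affine, for instance by Proposition~\ref{thm:ergodic_decomposition} or the standard finite-convex-combination identity. Hence
\[
P(\phi)=\lambda\Big(h_{\nu_1}(T)+\int\phi\,d\nu_1\Big)+(1-\lambda)\Big(h_{\nu_2}(T)+\int\phi\,d\nu_2\Big).
\]
By Proposition~\ref{thm:variational_upper} each bracket is at most $P(\phi)$, so both brackets equal $P(\phi)$. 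Therefore $\nu_1,\nu_2\in\partial P(\phi)$.

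\textbf{Step 2: extreme points.} For a face $F$ of a convex set $C$, the extreme points of $F$ are exactly the extreme points of $C$ that lie in $F$. One inclusion is the face property; the other is trivial. By Proposition~\ref{prop:ergodic_extreme}, the extreme points of $\calM_T(X)$ are the ergodic measures. So the extreme points of $\partial P(\phi)$ are precisely the ergodic equilibrium states.

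\textbf{Step 3: decomposition.} Given an equilibrium state $\mu$, I take its ergodic decomposition $\rho$ from Proposition~\ref{thm:ergodic_decomposition}. Using $h_\mu(T)=\int h_\nu(T)\,d\rho(\nu)$ and $\int\phi\,d\mu=\int\!\int\phi\,d\nu\,d\rho(\nu)$, I get
\[
0=\int\Big(P(\phi)-h_\nu(T)-\int\phi\,d\nu\Big)\,d\rho(\nu).
\]
The integrand is nonnegative by Proposition~\ref{thm:variational_upper}, so it vanishes $\rho$-a.e. Hence $\rho$ is concentrated on ergodic equilibrium states, and $\mu$ is an integral of them.

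An alternative route applies Choquet's theorem to the compact convex metrizable set $\partial P(\phi)$, using Corollary~\ref{cor:equilibrium_convex}, together with Step 2. When $\partial P(\phi)$ has finitely many extreme points this yields genuine finite convex combinations.

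\textbf{Main obstacle.} The delicate step is measurability and integrability in Step 3. The entropy may be $+\infty$, so I would assume $h_{\mathrm{top}}(T)<\infty$ (otherwise $P\equiv+\infty$ and the statement is vacuous). I also need $\nu\mapsto h_\nu(T)$ to be measurable on $\calM_T^{\mathrm{erg}}(X)$; it is a countable supremum/limit of continuous functions along refining partitions, so it is Borel. The affine-decomposition formula itself I will take from Proposition~\ref{thm:ergodic_decomposition}.
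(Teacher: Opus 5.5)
Your proposal is correct and follows the paper's route: you show $\partial P(\phi)$ is a face of $\calM_T(X)$ using the affinity of entropy and the variational upper bound, read off its extreme points from Proposition~\ref{prop:ergodic_extreme}, and then use the ergodic decomposition. Your Step~3 (integrating the nonnegative defect $P(\phi)-h_\nu(T)-\int\phi\,d\nu$ against $\rho$) fills in the detail the paper compresses into ``applied to $\partial P(\phi)$''; one small correction there is that $\nu\mapsto H_\nu(\alpha^{(n)})$ is not continuous in general, but it is Borel because $\nu\mapsto\nu(A)$ is Borel for every Borel set $A$, and that is all your measurability claim requires.
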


\begin{proof}
We show $\partial P(\phi)$ is a face of $\calM_T(X)$. Suppose $\mu = \lambda\mu_1 + (1-\lambda)\mu_2$ with $\mu \in \partial P(\phi)$, $\mu_1,\mu_2 \in \calM_T(X)$, and $\lambda \in (0,1)$. Then:
\begin{align}
P(\phi) &= h_\mu(T) + \int\phi\,d\mu \\
&= \lambda\!\left(h_{\mu_1}(T)+\int\phi\,d\mu_1\right) + (1-\lambda)\!\left(h_{\mu_2}(T)+\int\phi\,d\mu_2\right),
\end{align}
using the affinity of $\mu \mapsto \int\phi\,d\mu$ and the affinity of entropy on ergodic decompositions (Theorem~\ref{thm:ergodic_decomposition}). Since $h_{\mu_i}(T)+\int\phi\,d\mu_i \leq P(\phi)$ for $i=1,2$ (variational upper bound), and their convex combination equals $P(\phi)$, both must equal $P(\phi)$. Thus $\mu_1,\mu_2 \in \partial P(\phi)$, confirming the face property.

Since $\partial P(\phi)$ is a face of the Choquet simplex $\calM_T(X)$, its extreme points are extreme points of $\calM_T(X)$, which are the ergodic measures (Proposition~\ref{prop:ergodic_extreme}). The integral representation follows from Theorem~\ref{thm:ergodic_decomposition} applied to $\partial P(\phi)$.
\end{proof}

\subsection{Differentiability of Pressure}

Differentiability of the pressure at $\phi$ is equivalent to uniqueness of the equilibrium state. When the equilibrium state is unique, the derivative of $P$ at $\phi$ is the linear functional $\psi \mapsto \int\psi\,d\mu_\phi$. This connection between convex analysis and ergodic theory is the mechanism behind both the generic uniqueness result and the phase transition characterization.

\begin{theorem}[Differentiability and Uniqueness]\label{thm:differentiability}
The following are equivalent:
\begin{enumerate}
\item[(i)] $P$ is G\^{a}teaux differentiable at $\phi$.
\item[(ii)] $\partial P(\phi)$ is a singleton.
\item[(iii)] There exists a unique equilibrium state for $\phi$.
\end{enumerate}
When these hold, $DP(\phi)(\psi) = \int_X \psi \, d\mu_\phi$.
\end{theorem}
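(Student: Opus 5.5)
The plan is to obtain the theorem by combining two earlier results. Lemma~\ref{thm:subdiff_properties}(iv) is purely convex-analytic. Theorem~\ref{thm:equilibrium_subdiff} identifies $\partial P(\phi)$ with the set of equilibrium states.

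\textbf{Step 1: (i)$\Leftrightarrow$(ii).} By Proposition~\ref{thm:pressure_convexity} and Proposition~\ref{prop:pressure_basic}(ii), $P$ is a finite, convex, $1$-Lipschitz function on the Banach space $C(X)$. It is therefore proper, and continuous at every point. Lemma~\ref{thm:subdiff_properties}(iv) then gives the equivalence directly. Part (ii) of the same lemma guarantees that $\partial P(\phi)$ is non-empty, so ``singleton'' is not vacuous.

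\textbf{Step 2: (ii)$\Leftrightarrow$(iii).} Here I must first check that every element of $\partial P(\phi)\subset \calM(X)$ is automatically a $T$-invariant probability measure. Theorem~\ref{thm:equilibrium_subdiff} is stated only for $\mu\in\calM_T(X)$, so this step is needed. It is the one genuinely non-formal point, and I expect it to be the main obstacle.

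Let $\mu\in\partial P(\phi)$. I would test the subgradient inequality $P(\psi)\ge P(\phi)+\int(\psi-\phi)\,d\mu$ against three families of perturbations.
\begin{itemize}
\item Take $\psi=\phi+c$ with $c$ constant. Proposition~\ref{prop:pressure_basic}(iii) gives $c\ge c\,\mu(X)$ for all $c\in\R$, hence $\mu(X)=1$.
\item Take $\psi=\phi-g$ with $g\ge 0$. Monotonicity gives $0\ge P(\phi-g)-P(\phi)\ge -\int g\,d\mu$, hence $\int g\,d\mu\ge 0$, so $\mu\ge 0$.
\item Take $\psi=\phi+t(u\circ T-u)$ with $t\in\R$. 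Cocycle invariance (Proposition~\ref{prop:pressure_basic}(iv)) gives $0\ge t\int(u\circ T-u)\,d\mu$ for all $t$, so $\int u\circ T\,d\mu=\int u\,d\mu$ for all $u\in C(X)$, i.e.\ $\mu$ is $T$-invariant.
\end{itemize}
Thus $\partial P(\phi)\subset\calM_T(X)$. Theorem~\ref{thm:equilibrium_subdiff}, (i)$\Leftrightarrow$(ii), then shows that $\partial P(\phi)$ equals the set of equilibrium states of $\phi$. Consequently, $\partial P(\phi)$ is a singleton exactly when the equilibrium state is unique.

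\textbf{Step 3: the derivative formula.} Suppose $\partial P(\phi)=\{\mu_\phi\}$. The proof of Lemma~\ref{thm:subdiff_properties}(iv) shows that the one-sided directional derivative $P'(\phi;\psi)$ equals the support function $\sup_{\nu\in\partial P(\phi)}\int\psi\,d\nu=\int\psi\,d\mu_\phi$. This is linear in $\psi$ and bounded by $\|\psi\|_\infty$. Hence $DP(\phi)(\psi)=\int\psi\,d\mu_\phi$.

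Apart from the invariance check in Step~2, the argument is bookkeeping. One should also note that expansiveness is not needed here: existence of an element of $\partial P(\phi)$ comes from continuity of $P$, not from upper semi-continuity of entropy.
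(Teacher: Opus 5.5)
Your Steps 1 and 3 are the paper's proof. (i)$\Leftrightarrow$(ii) is Lemma~\ref{thm:subdiff_properties}(iv), and the derivative comes from the support-function formula $P'(\phi;\psi)=\sup_{\nu\in\partial P(\phi)}\int\psi\,d\nu$. For (ii)$\Leftrightarrow$(iii) the paper simply cites Theorem~\ref{thm:equilibrium_subdiff}. Your preliminary check that every subgradient lies in $\calM_T(X)$ is correct: you test against $\phi+c$, against $\phi-g$ with $g\ge 0$, and against $\phi+t(u\circ T-u)$. It supplies a step the paper leaves implicit, since that theorem only concerns $\mu\in\calM_T(X)$. It is the same three-case argument the paper uses later, in Cases~1--3 of the proof of Theorem~\ref{thm:universal_variational}.

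You should, however, withdraw the closing claim that expansiveness is not needed. Continuity of $P$ gives $\partial P(\phi)\neq\emptyset$. But your Step 2 also uses the direction (ii)$\Rightarrow$(i) of Theorem~\ref{thm:equilibrium_subdiff}, which rests on $P^*(\mu)=-h_\mu(T)$ for $\mu\in\calM_T(X)$ (Theorem~\ref{thm:pressure_conjugate}). That identity holds where $\mu\mapsto h_\mu(T)$ is upper semi-continuous, e.g.\ for expansive systems (Proposition~\ref{thm:entropy_usc}). This is why Main Theorem~\ref{thm:differentiability_main} assumes expansiveness. In general $\partial P(\phi)$ is only the set of tangent functionals, and these need not be equilibrium states.

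Here is a counterexample. Let a fixed point $p$ be accumulated by pairwise disjoint closed invariant subsets $X_n$ (e.g.\ copies of subshifts) shrinking to $p$. Assume $0<h_{\mathrm{top}}(T|_{X_n})$ increases strictly to $h:=h_{\mathrm{top}}(T)$.
\begin{itemize}
\item Every ergodic measure lives on $\{p\}$ or on some $X_n$, so every invariant measure has entropy $<h$. Hence $\phi=0$ has no equilibrium state.
\item For $\psi(p)=0$, the variational principle gives $h\le P(t\psi)\le h+o(|t|)$. So $P$ is G\^{a}teaux differentiable at $0$ with $\partial P(0)=\{\delta_p\}$.
\item Consistently, $P^*(\delta_p)=-h\neq 0=S(\delta_p)$, so Theorem~\ref{thm:pressure_conjugate} fails here.
\end{itemize}
Thus (i) and (ii) hold while (iii) fails. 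Your argument proves the theorem under expansiveness (or upper semi-continuity of entropy), which is the setting of Main Theorem~\ref{thm:differentiability_main}, but not in the generality you assert.
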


\begin{proof}
$(i) \Leftrightarrow (ii)$: Theorem~\ref{thm:subdiff_properties}(iv). $(ii) \Leftrightarrow (iii)$: Theorem~\ref{thm:equilibrium_subdiff}. For the derivative formula: when $\partial P(\phi) = \{\mu_\phi\}$, the directional derivative is $P'(\phi;\psi) = \sup_{x^* \in \partial P(\phi)}\langle\psi,x^*\rangle = \int\psi\,d\mu_\phi$.
\end{proof}

\begin{corollary}[Generic Uniqueness]\label{cor:generic_uniqueness}
The set of potentials $\phi \in C(X)$ with a unique equilibrium state is a dense $G_\delta$ subset of $C(X)$.
\end{corollary}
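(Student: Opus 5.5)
The plan is to obtain the corollary from Mazur's theorem combined with Theorem~\ref{thm:differentiability}. First, $C(X)$ is a separable Banach space, since $X$ is compact metric (as already used in Proposition~\ref{prop:weak_star_measures}). Second, $P\colon C(X)\to\R$ is convex by Proposition~\ref{thm:pressure_convexity} and $1$-Lipschitz by Proposition~\ref{prop:pressure_basic}(ii), hence continuous everywhere. Mazur's theorem then says that a continuous convex function on a separable Banach space is G\^{a}teaux differentiable on a dense $G_\delta$ set $D\subset C(X)$. By Theorem~\ref{thm:differentiability}, $\phi\in D$ exactly when $\partial P(\phi)$ is a singleton, which happens exactly when $\phi$ has a unique equilibrium state. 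So $D$ is precisely the set in the statement.

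To keep the argument self-contained rather than merely citing Mazur, I would exhibit the $G_\delta$ structure directly.
\begin{itemize}
\item Fix a countable dense set $\{\psi_k\}$ in $C(X)$.
\item For $k,m\ge1$, set
\[
U_{k,m}=\Bigl\{\phi: \exists\, t>0 \text{ with } P(\phi+t\psi_k)+P(\phi-t\psi_k)-2P(\phi)<t/m\Bigr\}.
\]
Each $U_{k,m}$ is open, because $P$ is continuous.
\item The intersection $\bigcap_{k,m}U_{k,m}$ is exactly the set where $P'(\phi;\psi_k)=-P'(\phi;-\psi_k)$ for all $k$. This uses that the symmetric difference quotient decreases as $t\downarrow0$, which follows from convexity as in the proof of Lemma~\ref{thm:subdiff_properties}(iv).
\item Since $P'(\phi;\cdot)$ is sublinear and $1$-Lipschitz, two-sidedness on a dense set forces $P'(\phi;\cdot)$ to be linear. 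By Lemma~\ref{thm:subdiff_properties}(iv) this is G\^{a}teaux differentiability.
\end{itemize}

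The main obstacle is density, meaning that each $U_{k,m}$ is dense. I would handle it by restricting to a line. Fix $\phi_0$ and $k$, and consider the convex real function $g(s)=P(\phi_0+s\psi_k)$. It is differentiable except at countably many $s$, so one can take $s$ arbitrarily small with $g$ differentiable at $s$. For such $s$, the second difference of $g$ divided by $t$ tends to $0$, so $\phi_0+s\psi_k\in U_{k,m}$, and this point is close to $\phi_0$. The Baire category theorem in the complete space $C(X)$ then makes the countable intersection dense.

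Finally, note that no expansiveness is needed here. Theorem~\ref{thm:differentiability} and Corollary~\ref{cor:equilibrium_convex} already give $\partial P(\phi)\neq\emptyset$ for every $\phi$, so the singleton set is exactly the uniqueness set in full generality.
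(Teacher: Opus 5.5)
Your main argument is the paper's proof: Mazur's theorem applied to the continuous convex $P$ on the separable space $C(X)$, combined with Theorem~\ref{thm:differentiability}. The Baire-category argument you add is a correct version of the standard proof of Mazur's theorem, which the paper only cites from Phelps. Its steps are openness of $U_{k,m}$, monotonicity of the symmetric difference quotient, density via differentiability points of the convex function $s\mapsto P(\phi_0+s\psi_k)$, and linearity of the sublinear $1$-Lipschitz map $P'(\phi;\cdot)$ once it is odd on a dense set.

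Your closing remark that no expansiveness is needed should be dropped, however. Nonemptiness of $\partial P(\phi)$ does hold in general, but it is not the point. The singleton set equals the uniqueness set only if $\partial P(\phi)$ \emph{coincides} with the set of equilibrium states. The inclusion of $\partial P(\phi)$ into the equilibrium states, which is step (ii)$\Rightarrow$(i) of Theorem~\ref{thm:equilibrium_subdiff}, uses $P^*(\mu)=-h_\mu(T)$ on $\calM_T(X)$.

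By Fenchel--Moreau, $P^*(\mu)$ is in general minus the upper semicontinuous regularization $\limsup_{\nu\to\mu}h_\nu(T)$ of the entropy map. So this identity, and the identification built on it, requires upper semicontinuity of $\mu\mapsto h_\mu(T)$. Without it, only the inclusion of equilibrium states into $\partial P(\phi)$ survives. For example, take a compact system with finite topological entropy but no measure of maximal entropy (such as Misiurewicz's examples): $\partial P(0)\neq\emptyset$ because $P$ is continuous and convex, yet $0$ has no equilibrium state at all.

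This is why Main Theorem~\ref{thm:differentiability_main} assumes expansiveness, which gives upper semicontinuity (Proposition~\ref{thm:entropy_usc}). Under that hypothesis, your proof is complete.
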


\begin{proof}
By the Mazur theorem (see Phelps \cite{Phelps1993}, Theorem~1.20), a continuous convex function on a separable Banach space is G\^{a}teaux differentiable on a dense $G_\delta$ set. The pressure $P: C(X) \to \R$ is continuous and convex (Proposition~\ref{prop:pressure_basic} and Theorem~\ref{thm:pressure_convexity}), and $C(X)$ is separable (since $X$ is compact metric). By Theorem~\ref{thm:differentiability}, G\^{a}teaux differentiability at $\phi$ is equivalent to uniqueness of the equilibrium state.
\end{proof}

The relationship between differentiability, uniqueness, and phase
transitions is visible in the graph of the pressure along a
one-parameter family $\phi_0 + t\psi$. At a value of $t$ where the
graph is smooth, convexity guarantees a unique tangent line whose slope
equals $\int\psi\,d\mu_{\phi_0+t\psi}$; the equilibrium state is
unique (Theorem~\ref{thm:differentiability}). At a value of $t$ where
the graph has a corner, multiple supporting lines with different slopes
fit beneath the curve; each slope corresponds to a distinct equilibrium
state, and the potential exhibits a first-order phase transition
(Theorem~\ref{thm:phase_transition_equiv}). The jump in slope across
the corner equals the difference
$\int\psi\,d\mu_1 - \int\psi\,d\mu_2$ between the two coexisting
phases (Example~\ref{ex:two_phase}).
Figure~\ref{fig:pressure_tangent} illustrates this correspondence.

\begin{figure}[ht]
\centering
\hspace{-0.8cm}%
\begin{tikzpicture}[>=stealth, font=\small, scale=0.82]

  \draw[->] (-0.3,0) -- (10.2,0) node[right] {$t$};
  \draw[->] (0,-0.3) -- (0,6.0) node[above] {$P(\phi_0 + t\psi)$};


  \draw[very thick, domain=0.3:5, samples=60] 
    plot (\x, {1 - 0.3*(\x-5) + 0.08*(\x-5)*(\x-5)});
  \draw[very thick, domain=5:9.5, samples=60] 
    plot (\x, {1 + 0.6*(\x-5) + 0.08*(\x-5)*(\x-5)});

  \draw[thin, gray, dashed] 
    plot[domain=0.5:4.5] (\x, {4.18 - 0.78*\x});
  \fill (2, 2.62) circle (2pt);

  \draw[thin, ->] (0.6,4.8) -- (1.85,2.85);
  \node[above, font=\footnotesize] at (0.6,4.8) {$\phi_1$};

  \fill (5, 1.0) circle (3pt);

  \draw[thick, dashed, domain=1.5:7.5] 
    plot (\x, {2.5 - 0.3*\x});
  \draw[thick, dashed, domain=3.5:9.3] 
    plot (\x, {-2.0 + 0.6*\x});

  \draw[thin, ->] (5.0,-1.0) -- (5.0,0.85);
  \node[below, font=\footnotesize, align=center] at (5.0,-1.1) 
    {$\phi_c$: corner\\(phase transition)};

  \draw[thin, ->] (9,0.9) -- (7.5,0.3);
  \node[right, font=\footnotesize] at (8.5,1.0) 
    {slope $= \!\int\!\psi\,d\mu_1$};

  \draw[thin, ->] (9.8,4.8) -- (9.15,3.55);
  \node[above, font=\footnotesize] at (11,4.6) 
    {slope $= \!\int\!\psi\,d\mu_2$};

  \draw[thin, gray, dashed] 
    plot[domain=5.5:9.5] (\x, {-5.12 + 1.08*\x});
  \fill (8, 3.52) circle (2pt);

  \draw[thin, ->] (9.5,2.5) -- (8.15,3.45);
  \node[right, font=\footnotesize] at (9.5,2.5) {$\phi_2$};

\end{tikzpicture}
\caption{Pressure $P(\phi_0 + t\psi)$ along a one-parameter family 
(Theorems~\ref{thm:equilibrium_subdiff},~\ref{thm:differentiability}, 
and~\ref{thm:phase_transition_equiv}). At the smooth points $\phi_1$ 
and $\phi_2$, the unique tangent line (thin gray) has slope 
$\int\psi\,d\mu_\phi$, corresponding to a unique equilibrium state. 
At the corner $\phi_c$, two supporting lines (thick dashed) with 
different slopes pass through the same point, corresponding to two 
coexisting equilibrium states 
$\mu_1, \mu_2 \in \partial P(\phi_c)$: a first-order phase 
transition.}
\label{fig:pressure_tangent}
\end{figure}
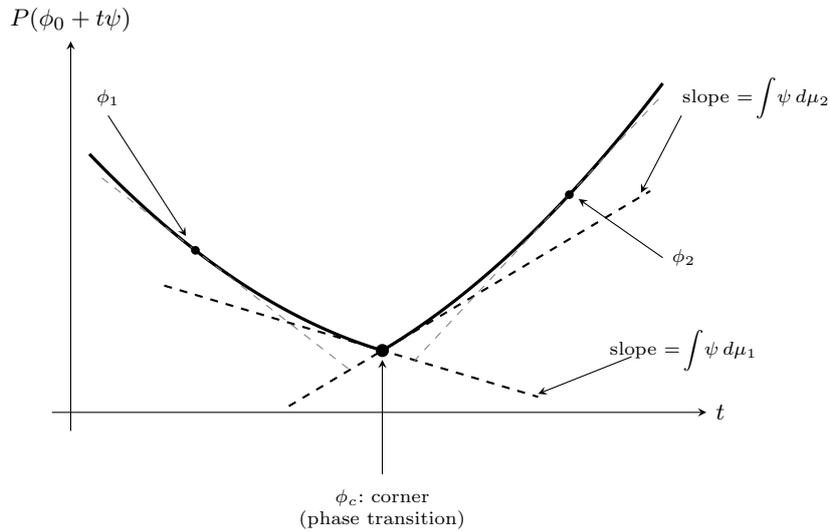

\subsection{Fr\'{e}chet Differentiability and the Variance Formula}

\begin{theorem}[Fr\'{e}chet Differentiability]\label{thm:frechet}
Let $(X, T)$ be an expansive dynamical system with specification. If $\phi \in C^\alpha(X)$ has a unique equilibrium state $\mu_\phi$ with exponential decay of correlations, then $P: C^\alpha(X) \to \R$ is Fr\'{e}chet differentiable at $\phi$ in the $\|\cdot\|_\alpha$ norm, with $DP(\phi)(\psi) = \int \psi \, d\mu_\phi$. On $C(X)$ with the $\|\cdot\|_\infty$ norm, the pressure is G\^{a}teaux differentiable at $\phi$ with the same derivative.
\end{theorem}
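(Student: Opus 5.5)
The plan is to prove the second assertion first and then reduce Fréchet differentiability to a uniform quadratic bound on the remainder.

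\textbf{Step 1: the $C(X)$ claim.} G\^{a}teaux differentiability on $C(X)$ with derivative $\psi\mapsto\int\psi\,d\mu_\phi$ follows at once from uniqueness of $\mu_\phi$ via Theorem~\ref{thm:differentiability}. No new argument is needed.

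\textbf{Step 2: the linear lower bound.} On $C^\alpha(X)$, the continuous inclusion $C^\alpha(X)\hookrightarrow C(X)$ turns the subgradient inequality of Theorem~\ref{thm:equilibrium_subdiff}(ii) into
\[
R(\psi):=P(\phi+\psi)-P(\phi)-\int\psi\,d\mu_\phi\geq 0 \quad\text{for all }\psi\in C^\alpha(X).
\]
Fréchet differentiability in $\|\cdot\|_\alpha$ is therefore equivalent to $R(\psi)=o(\|\psi\|_\alpha)$. The target estimate is a uniform quadratic bound $R(\psi)\leq C\|\psi\|_\alpha^2$ for $\|\psi\|_\alpha$ small.

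\textbf{Step 3: the transfer operator.} To obtain this bound I would use the transfer operator $\calL_{\phi+\psi}$ acting on $C^\alpha(X)$, taking from Part~I \cite{Thiam2026a} the following facts. For expansive systems with specification and Hölder $\phi$, $\calL_\phi$ has a simple leading eigenvalue $e^{P(\phi)}$ with a spectral gap. The exponential decay of correlations of $\mu_\phi$ is exactly what encodes this gap on $C^\alpha$. For a symbolic model one can pass to it through a Markov partition, or work with the conformal/Gibbs construction of Part~I.

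\textbf{Step 4: analyticity of the leading eigenvalue.} The map $\psi\mapsto\calL_{\phi+\psi}=\calL_\phi(e^{\psi}\,\cdot)$ is analytic from $C^\alpha(X)$ into the bounded operators on $C^\alpha(X)$. This uses that multiplication by $e^\psi$ is an analytic function of $\psi$ in the Banach algebra $C^\alpha$, with $\|e^{\psi}-1-\psi\|_\alpha\leq C\|\psi\|_\alpha^2$. Kato's analytic perturbation theory for an isolated simple eigenvalue then gives an analytic family $\lambda(\psi)$ on a $\|\cdot\|_\alpha$-ball, with $\lambda(\psi)=e^{P(\phi+\psi)}$. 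Analyticity in the Banach-space sense already implies Fréchet differentiability, and hence the quadratic remainder bound. The first-order formula $D\lambda(0)\psi=\lambda(0)\int\psi\,d\mu_\phi$ follows from $\mu_\phi=h_\phi\nu_\phi$ with eigenprojection $v\mapsto h_\phi\int v\,d\nu_\phi$. Taking logarithms gives $DP(\phi)(\psi)=\int\psi\,d\mu_\phi$, consistent with the Gâteaux derivative from Step~1.

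\textbf{Main obstacle.} The hardest step is justifying the spectral gap on $C^\alpha$ from the stated hypotheses, that is, deriving quasi-compactness from exponential decay of correlations plus specification. My first approach would be to invoke the Ruelle–Perron–Frobenius theorem of Part~I directly, on a Lasota–Yorke inequality for $\calL_\phi$. As a fallback I would give a direct route that avoids operators: via the Gibbs property, write $P(\phi+\psi)-P(\phi)=\lim_n\frac1n\log\int e^{S_n\psi}\,d\mu_\phi$. Expanding to second order, the cumulant bound $\Var_{\mu_\phi}(S_n\psi)\leq Cn\|\psi\|_\alpha^2$ is exactly what summable exponential decay of correlations provides. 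The higher cumulants are controlled by the same decay for $\|\psi\|_\alpha$ small, which again yields $R(\psi)\leq C\|\psi\|_\alpha^2$.
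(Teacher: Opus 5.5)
Your main route is essentially the paper's proof. G\^{a}teaux differentiability on $C(X)$ comes from uniqueness via the singleton subdifferential (Theorem~\ref{thm:differentiability}), and Fr\'{e}chet differentiability on $C^\alpha(X)$ comes from the Part~I spectral gap plus Kato perturbation of the simple leading eigenvalue $e^{P(\phi+\psi)}$, giving a quadratic remainder in $\|\cdot\|_\alpha$. The only real difference is that you perturb analytically in the Banach parameter $\psi\in C^\alpha(X)$ rather than along lines $t\mapsto\phi+t\psi$, which directly gives the uniformity over directions that the paper extracts from its resolvent bound on $D^2P(\phi)$; your cumulant fallback, by contrast, would need uniform control of all higher cumulants, which pair-correlation decay alone does not supply.
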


\begin{proof}
\textbf{Step 1 (Analytic perturbation).} Through the Markov partition (guaranteed by specification), we work in the symbolic setting. The Ruelle transfer operator $\calL_{\phi+t\psi}$ for $\psi \in C(X)$ and $t$ in a complex neighborhood of $0$ forms an analytic family of bounded operators on the H\"{o}lder space $C^\alpha(X)$. By the spectral theory (Part~I \cite{Thiam2026a}, Theorem~2.2), $\calL_\phi$ has a simple dominant eigenvalue $\lambda_0 = e^{P(\phi)}$ with spectral gap $\gamma < 1$. By Kato's analytic perturbation theory \cite{Kato1980}, the dominant eigenvalue $\lambda(t)$ of $\calL_{\phi+t\psi}$ is analytic in $t$ for $|t| < \epsilon$, where $\epsilon$ depends on the spectral gap and $\|\psi\|_\alpha$.

\textbf{Step 2 (Pressure is analytic).} Since $P(\phi+t\psi) = \log\lambda(t)$ and $\lambda(t) > 0$ is analytic, the map $t \mapsto P(\phi+t\psi)$ is real-analytic for each $\psi \in C^\alpha(X)$. By the Taylor expansion at $t = 0$:
\begin{equation}
P(\phi+t\psi) = P(\phi) + t\int\psi\,d\mu_\phi + \frac{t^2}{2}D^2P(\phi)(\psi,\psi) + O(t^3\|\psi\|_\alpha^3).
\end{equation}

\textbf{Step 3 (Fr\'{e}chet estimate).} Setting $t = 1$ with $\psi$ replaced by $\psi$ of small norm: $|P(\phi+\psi) - P(\phi) - \int\psi\,d\mu_\phi| \leq C\|\psi\|_\alpha^2$, where the bound on $|D^2P(\phi)(\psi,\psi)|$ follows from the resolvent estimate $\|(\lambda_0 - \calL_0)^{-1}Q\| \leq (1-\gamma)^{-1}\lambda_0^{-1}$, giving $|D^2P(\phi)(\psi,\psi)| \leq C_0\|\psi\|_\alpha^2$ with $C_0$ depending on $\phi$ and the spectral gap. This is the Fr\'{e}chet condition on $C^\alpha(X)$.

\textbf{Step 4 (G\^{a}teaux differentiability on $C(X)$).} For general $\psi \in C(X)$ and $t \in \R$ small, the map $t \mapsto P(\phi + t\psi)$ is convex. The one-sided derivative satisfies $P'(\phi;\psi) = \sup_{\mu \in \partial P(\phi)}\int\psi\,d\mu = \int\psi\,d\mu_\phi$ since $\partial P(\phi) = \{\mu_\phi\}$ (Theorem~\ref{thm:differentiability}). As $P'(\phi;\psi) = -P'(\phi;-\psi) = \int\psi\,d\mu_\phi$ for all $\psi \in C(X)$, the pressure is G\^{a}teaux differentiable at $\phi$ on $C(X)$. The Fr\'{e}chet condition $|P(\phi+\psi)-P(\phi)-\int\psi\,d\mu_\phi| = o(\|\psi\|_\infty)$ does not hold uniformly for all $\psi \in C(X)$, since continuous functions with small supremum norm can have arbitrarily poor moduli of continuity; the Fr\'{e}chet estimate is restricted to $C^\alpha(X)$.
\end{proof}

\begin{theorem}[Second Derivative of Pressure]\label{thm:second_derivative}
Suppose $(X, T)$ has specification and $\phi \in C^\alpha(X)$ has a unique equilibrium state $\mu_\phi$ with exponential mixing. For $\psi \in C^\alpha(X)$,
\begin{equation}\label{eq:second_derivative}
D^2 P(\phi)(\psi, \psi) = \lim_{n \to \infty} \frac{1}{n} \Var_{\mu_\phi}(S_n \psi) = \xi^2_\psi \geq 0,
\end{equation}
with $\xi^2_\psi = 0$ if and only if $\psi$ is cohomologous to a constant.
\end{theorem}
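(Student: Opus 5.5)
Following the proof of Theorem~\ref{thm:frechet}, I would use the Nagaev--Guivarc'h perturbation argument and work in the symbolic setting, where the transfer operator has the spectral gap quoted from Part~I.

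\textbf{Step 1 (normalization).} Replace $\phi$ by a cohomologous normalized potential $\tilde\phi = \phi + u - u\circ T - P(\phi)$ with $u \in C^\alpha(X)$, so that $\calL_{\tilde\phi}1 = 1$ and $\calL_{\tilde\phi}^*\mu_\phi = \mu_\phi$. By Proposition~\ref{prop:pressure_basic}(iii)--(iv), neither the pressure nor $\mu_\phi$ changes. By replacing $\psi$ with $\psi - \int\psi\,d\mu_\phi$, which only adds a linear term to $P(\phi+t\psi)$, I may also assume $\int\psi\,d\mu_\phi = 0$.

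\textbf{Step 2 (derivatives of $\lambda(t)$).} Set $\calL_t = \calL_{\tilde\phi + t\psi}$. By Kato, as in Step~1 of Theorem~\ref{thm:frechet}, there is an analytic leading eigenvalue $\lambda(t)$ with $\lambda(0)=1$. There is also an analytic eigenprojection $\Pi_t$, and $\calL_t^n = \lambda(t)^n\Pi_t + N_t^n$ with $\|N_t^n\|_\alpha \le C\gamma'^n$ for some $\gamma' < 1$, uniformly for small $|t|$. Using $\calL_t^n f = \calL_0^n(e^{tS_n\psi}f)$ and $\mu_\phi$-duality, I get
\[
\int e^{tS_n\psi}\,d\mu_\phi = \lambda(t)^n\int \Pi_t 1\,d\mu_\phi + O(\gamma'^n).
\]
Since $\int\Pi_t1\,d\mu_\phi$ is analytic and equals $1$ at $t=0$, I take logarithms, divide by $n$, and differentiate twice at $t=0$. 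The left side gives $\frac1n\Var_{\mu_\phi}(S_n\psi)$, because the mean is zero. The right side gives
\[
(\log\lambda)''(0) + \frac1n\,\frac{d^2}{dt^2}\Big|_{t=0}\log\int\Pi_t1\,d\mu_\phi + O(\gamma'^n).
\]
Differentiating in $t$ is legitimate because everything is analytic on a fixed complex disc, so Cauchy estimates control the derivatives of the $O(\gamma'^n)$ remainder. Letting $n\to\infty$ and using $P(\phi+t\psi) = P(\phi) + \log\lambda(t)$ yields $D^2P(\phi)(\psi,\psi) = \lim_n \frac1n\Var_{\mu_\phi}(S_n\psi)$.

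\textbf{Step 3 (the variance as a series).} Independently, exponential mixing gives $|\int\psi\cdot\psi\circ T^k\,d\mu_\phi| \le C\|\psi\|_\alpha^2\gamma^k$. Expanding the variance,
\[
\frac1n\Var(S_n\psi) = \int\psi^2\,d\mu_\phi + 2\sum_{k=1}^{n-1}\Bigl(1-\frac kn\Bigr)\int\psi\,\psi\circ T^k\,d\mu_\phi,
\]
which converges to the absolutely convergent Green--Kubo sum $\xi_\psi^2$. This series is a limit of variances, so $\xi_\psi^2 \ge 0$. It also follows from convexity of $t \mapsto P(\phi+t\psi)$.

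\textbf{Step 4 (the degenerate case).} If $\psi = c + v\circ T - v$, then $S_n\psi - nc = v\circ T^n - v$ is bounded, so $\xi_\psi^2 = 0$. For the converse, which I expect to be the main obstacle, I would use the Gordin martingale decomposition. Set $w = \sum_{k\ge1}\calL_{\tilde\phi}^k\psi$; it converges in $C^\alpha$ because $\psi$ has mean zero and there is a spectral gap. Then $\psi = m + w\circ T - w$, where $m$ satisfies $\calL_{\tilde\phi}m = 0$, so the $m\circ T^k$ are reverse martingale differences. This gives $\xi_\psi^2 = \int m^2\,d\mu_\phi$. Hence $\xi_\psi^2 = 0$ forces $m = 0$ $\mu_\phi$-a.e., and so $\psi = w\circ T - w$ $\mu_\phi$-a.e.

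The remaining delicate point is to upgrade this almost-everywhere identity to an everywhere one. Both sides are continuous, and $\mu_\phi$ has full support, which follows from the Gibbs property under specification. So the identity holds everywhere, and $\psi$ is cohomologous to a constant after undoing the centering in Step~1.
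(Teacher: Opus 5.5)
Your proof is correct, but it takes a different route from the paper in both halves.

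For the identity $D^2P(\phi)(\psi,\psi)=\lim_n n^{-1}\Var_{\mu_\phi}(S_n\psi)$, the paper does not use the moment generating function. It differentiates the eigen-equation $\calL_t h_t=\lambda_t h_t$ twice at $t=0$ and solves for $h'(0)$ with the reduced resolvent $(\lambda_0-\calL_0)^{-1}Q$. Pairing with $\nu_0$, it expresses $\lambda''(0)/\lambda_0$ as the Green--Kubo series $\Var_{\mu_\phi}(\psi)+2\sum_{n\ge1}\mathrm{Cov}_{\mu_\phi}(\psi,\psi\circ T^n)$. Only then does it match this series with $\lim_n n^{-1}\Var(S_n\psi)$, using the Ces\`aro expansion of your Step~3.

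Your Nagaev--Guivarc'h route normalizes so that $\int e^{tS_n\psi}\,d\mu_\phi=\int\calL_t^n1\,d\mu_\phi$, splits off $\lambda(t)^n\Pi_t$, takes logarithms and applies Cauchy estimates. It never differentiates eigenfunctions or eigenmeasures. That avoids exactly the bookkeeping where the paper's computation is loose: the first-order term of $\frac{d}{dt}\calL_th_t$ is $\calL_0(\psi h_0)$, not $\psi\calL_0h_0$. The cost is that you need the spectral splitting uniformly for complex $t$, and a bound $|\lambda(t)|\ge\rho>\gamma'$ on the disc. With that bound, writing $c(t)=\int\Pi_t1\,d\mu_\phi$, you get $\int e^{tS_n\psi}\,d\mu_\phi=\lambda(t)^n c(t)\bigl(1+r_n(t)\bigr)$ with $r_n=O((\gamma'/\rho)^n)$ rather than literally $O(\gamma'^n)$, and the logarithm is well defined and analytic. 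In exchange, the paper's computation gives an explicit resolvent formula for $\lambda''(0)$.

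For the degenerate case, the paper only cites Parry--Pollicott and obtains $\psi-c=u\circ T-u$ with $u\in L^2(\mu_\phi)$. Your Gordin decomposition, with $w=\sum_{k\ge1}\calL_{\tilde\phi}^k\psi\in C^\alpha$, $\calL_{\tilde\phi}m=0$ and $\xi_\psi^2=\int m^2\,d\mu_\phi$, combined with full support of $\mu_\phi$, produces a H\"older transfer function. That is what ``cohomologous to a constant'' should mean here, so on this point your proof is more complete than the paper's.

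Two cosmetic corrections:
\begin{enumerate}
\item[(a)] Normalization shifts the pressure by $-P(\phi)$; it does not leave it unchanged.
\item[(b)] Invariance of the equilibrium state under adding $u-u\circ T-P(\phi)$ comes from the variational characterization, since $\int(u-u\circ T)\,d\mu=0$ for invariant $\mu$. It does not follow from Proposition~\ref{prop:pressure_basic} itself.
\end{enumerate}
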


\begin{proof}
Write $\calL_t = \calL_{\phi+t\psi}$ with eigendata $\lambda_t, h_t, \nu_t$. Without loss of generality assume $\int\psi\,d\mu_\phi = 0$ (so $\lambda'(0)=0$). Differentiating $\calL_t h_t = \lambda_t h_t$ at $t=0$:
\begin{equation}
\calL_0 h'(0) + \psi\calL_0 h_0 = \lambda'(0)h_0 + \lambda_0 h'(0).
\end{equation}
Since $\lambda'(0) = 0$, we get $(\lambda_0 - \calL_0)h'(0) = \psi\calL_0 h_0$. Let $Q = I - h_0\nu_0$ be the complementary spectral projection. Then $h'(0) = (\lambda_0-\calL_0)^{-1}Q(\psi h_0)$, where the resolvent $(\lambda_0-\calL_0)^{-1}Q = \sum_{n=0}^\infty\lambda_0^{-(n+1)}\calL_0^n Q$ converges by the spectral gap.

Differentiating $\calL_t h_t = \lambda_t h_t$ twice at $t=0$ and integrating against $\nu_0$:
\begin{equation}
\lambda_0\!\int h''(0)\,d\nu_0 + 2\!\int\psi\calL_0 h'(0)\,d\nu_0 + \int\psi^2 h_0\,d\nu_0 = \lambda''(0) + \lambda_0\!\int h''(0)\,d\nu_0.
\end{equation}
The $\int h''(0)\,d\nu_0$ terms cancel, giving $\lambda''(0) = 2\int\psi\calL_0 h'(0)\,d\nu_0 + \int\psi^2\,d\mu_0$. Substituting the resolvent expression:
\begin{equation}
\int\psi\calL_0 h'(0)\,d\nu_0 = \sum_{n=1}^\infty\int\psi\cdot(\psi\circ T^n)h_0\,d\nu_0 = \sum_{n=1}^\infty\mathrm{Cov}_{\mu_\phi}(\psi,\psi\circ T^n),
\end{equation}
where the series converges absolutely by exponential mixing: $|\mathrm{Cov}_{\mu_\phi}(\psi,\psi\circ T^n)| \leq C\gamma^n\|\psi\|_\alpha^2$. Therefore:
\begin{equation}
P''(\phi;\psi) = \frac{\lambda''(0)}{\lambda_0} = \mathrm{Var}_{\mu_\phi}(\psi) + 2\sum_{n=1}^\infty\mathrm{Cov}_{\mu_\phi}(\psi,\psi\circ T^n).
\end{equation}
Expanding $\mathrm{Var}_{\mu_\phi}(S_n\psi) = \sum_{j,k=0}^{n-1}\mathrm{Cov}_{\mu_\phi}(\psi\circ T^j,\psi\circ T^k) = n\mathrm{Var}_{\mu_\phi}(\psi) + 2\sum_{m=1}^{n-1}(n-m)\mathrm{Cov}_{\mu_\phi}(\psi,\psi\circ T^m)$, dividing by $n$, and using dominated convergence: $\lim_{n\to\infty}n^{-1}\mathrm{Var}_{\mu_\phi}(S_n\psi) = \mathrm{Var}_{\mu_\phi}(\psi) + 2\sum_{m=1}^\infty\mathrm{Cov}_{\mu_\phi}(\psi,\psi\circ T^m) = P''(\phi;\psi)$.

Non-negativity follows since $P$ is convex. For the zero characterization: $\xi^2_\psi = 0$ implies $\mathrm{Var}_{\mu_\phi}(S_n\psi) = o(n)$, hence $n^{-1/2}S_n\psi \to 0$ in $L^2(\mu_\phi)$. By the theory of stationary processes with spectral gap, this holds if and only if $\psi = u \circ T - u + c$ for some $u \in L^2(\mu_\phi)$ and $c = \int\psi\,d\mu_\phi = 0$ (see Parry and Pollicott \cite{ParryPollicott1990}, Chapter~4).
\end{proof}

\section{Phase Transitions and Non-Uniqueness}\label{sec:phase_transitions}

A first-order phase transition occurs when the pressure $P$ is not differentiable at a potential $\phi$, which by Theorem~\ref{thm:differentiability} is equivalent to the coexistence of multiple equilibrium states. This section analyzes the geometry of phase coexistence: the set of equilibrium states at a transition point forms a simplex (Theorem~\ref{thm:coexistence_structure}), every ergodic equilibrium state is an exposed point (Proposition~\ref{thm:exposed_equilibrium}), and the directional derivative selects which phase is preferred under perturbation. We also collect criteria for uniqueness and phase transitions, with examples illustrating both possibilities.

\subsection{Characterization and Geometry of Phase Coexistence}

\begin{definition}[First-Order Phase Transition]\label{def:phase_transition}
A potential $\phi \in C(X)$ exhibits a first-order phase transition if $|\partial P(\phi)| > 1$.
\end{definition}

\begin{theorem}[Equivalent Characterizations of Phase Transitions]\label{thm:phase_transition_equiv}
The following conditions are equivalent:
\begin{enumerate}
\item[(i)] $\phi$ exhibits a first-order phase transition.
\item[(ii)] $P$ is not G\^{a}teaux differentiable at $\phi$.
\item[(iii)] There exist directions $\psi$ with $P'(\phi;\psi) + P'(\phi;-\psi) > 0$.
\item[(iv)] $\partial P(\phi)$ is not a singleton.
\end{enumerate}
\end{theorem}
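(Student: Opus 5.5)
The plan is to arrange the four conditions around (iv) and reduce everything to results already proved. By Definition~\ref{def:phase_transition}, (i) says $|\partial P(\phi)|>1$. Corollary~\ref{cor:equilibrium_convex} (equivalently Lemma~\ref{thm:subdiff_properties}(ii), since $P$ is $1$-Lipschitz by Proposition~\ref{prop:pressure_basic}(ii)) gives $\partial P(\phi)\neq\emptyset$. Hence ``not a singleton'' and ``more than one point'' coincide, and (i)$\Leftrightarrow$(iv) is immediate. Next, (ii)$\Leftrightarrow$(iv) is the contrapositive of Lemma~\ref{thm:subdiff_properties}(iv), or equivalently of (i)$\Leftrightarrow$(ii) in Theorem~\ref{thm:differentiability}.

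The remaining work is (iii)$\Leftrightarrow$(iv). I will use the max formula quoted in the proof of Lemma~\ref{thm:subdiff_properties}(iv): for a continuous convex function,
\[
P'(\phi;\psi)=\max_{\mu\in\partial P(\phi)}\int\psi\,d\mu ,
\]
where the supremum is attained by weak-$*$ compactness. Applying it to $\psi$ and to $-\psi$ gives
\[
P'(\phi;\psi)+P'(\phi;-\psi)=\max_{\mu\in\partial P(\phi)}\int\psi\,d\mu-\min_{\mu\in\partial P(\phi)}\int\psi\,d\mu ,
\]
which is the width of $\partial P(\phi)$ in the direction $\psi$. This quantity is always nonnegative, matching the inequality $-P'(\phi;-\psi)\le P'(\phi;\psi)$ already noted.

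For (iv)$\Rightarrow$(iii), take $\mu_1\neq\mu_2$ in $\partial P(\phi)$. Since $C(X)$ separates points of $\calM(X)$ (Riesz representation), there is $\psi\in C(X)$ with $\int\psi\,d\mu_1>\int\psi\,d\mu_2$, and then the width above is strictly positive. For (iii)$\Rightarrow$(iv), if $\partial P(\phi)=\{\mu\}$, then both the max and the min equal $\int\psi\,d\mu$, so the sum vanishes for every $\psi$. This contradicts (iii).

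The only nontrivial ingredient is the max formula itself, which the paper cites from Phelps. If I had to justify it, I would do so by Hahn--Banach. The inequality $\le$ is easy. For $\ge$, one applies Hahn--Banach to extend the linear functional $t\psi\mapsto tP'(\phi;\psi)$ from $\R\psi$, dominated by the sublinear map $P'(\phi;\cdot)$. Since $P$ is Lipschitz, $P'(\phi;h)\le\|h\|_\infty$, so the extension is continuous. This continuous extension is an element of $\partial P(\phi)$ attaining $P'(\phi;\psi)$. I expect this step to be the only potential obstacle, and it is standard.
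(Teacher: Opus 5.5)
Your proof is correct. It differs from the paper's only in how condition (iii) is connected to the others.

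The paper links (iii) to (ii). It reads G\^{a}teaux differentiability at $\phi$ as $P'(\phi;\psi) = -P'(\phi;-\psi)$ for all $\psi$ and then negates. This uses only that $P'(\phi;\cdot)$ is sublinear, so $P'(\phi;\psi)+P'(\phi;-\psi)\geq 0$ always, together with the implicit fact that an odd sublinear functional is linear; continuity comes from the Lipschitz bound.

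You instead link (iii) to (iv) through the max formula $P'(\phi;\psi)=\max_{\mu\in\partial P(\phi)}\int\psi\,d\mu$. This turns $P'(\phi;\psi)+P'(\phi;-\psi)$ into the width of $\partial P(\phi)$ in the direction $\psi$, and distinct subgradients are then separated by some $\psi\in C(X)$.

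What each route buys:
\begin{itemize}
\item The paper's route is more elementary at this step, since it needs no Hahn--Banach. However, its (ii)$\Leftrightarrow$(iv) step already rests on the same support-function identity through Lemma~\ref{thm:subdiff_properties}(iv).
\item Your route gives a quantitative statement: the ``jump'' equals the spread of $\int\psi\,d\mu$ over the equilibrium states, which is exactly what Example~\ref{ex:two_phase} computes.
\item You are also more careful on (i)$\Leftrightarrow$(iv). The paper cites Theorem~\ref{thm:equilibrium_subdiff} there, but the real point is that $\partial P(\phi)\neq\emptyset$, so ``not a singleton'' and ``more than one point'' coincide. You supply this.
\end{itemize}

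One cosmetic slip in your sketch of the max formula: your labels are reversed. With the formula written as $P'(\phi;\psi)=\max_\mu\int\psi\,d\mu$, the easy inequality is $P'\geq\max$, and the one needing Hahn--Banach is $P'\leq\max$. Your argument (Hahn--Banach produces a subgradient attaining $P'(\phi;\psi)$) is the right one for that hard direction. Two details in it are worth stating explicitly:
\begin{itemize}
\item For $t<0$, the domination of $t\psi\mapsto tP'(\phi;\psi)$ by $P'(\phi;\cdot)$ uses exactly $P'(\phi;\psi)+P'(\phi;-\psi)\geq 0$.
\item The extension lies in $\partial P(\phi)$ because $P'(\phi;h)\leq P(\phi+h)-P(\phi)$, by monotonicity of the difference quotients.
\end{itemize}
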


\begin{proof}
$(i) \Leftrightarrow (iv)$: definition combined with Theorem~\ref{thm:equilibrium_subdiff}. $(ii) \Leftrightarrow (iv)$: Theorem~\ref{thm:differentiability}. $(ii) \Leftrightarrow (iii)$: G\^{a}teaux differentiability at $\phi$ means $P'(\phi;\psi) = -P'(\phi;-\psi)$ for all $\psi$ (i.e., the directional derivative is linear). Negating: $P$ is not differentiable if and only if there exists $\psi$ with $P'(\phi;\psi) > -P'(\phi;-\psi)$, i.e., $P'(\phi;\psi)+P'(\phi;-\psi) > 0$. (For convex $P$, we always have $P'(\phi;\psi) \geq -P'(\phi;-\psi)$.)
\end{proof}

\begin{theorem}[Structure of Coexisting Phases]\label{thm:coexistence_structure}
Suppose $\phi$ exhibits a first-order phase transition with ergodic equilibrium states $\mu_1,\ldots,\mu_k$ ($k \geq 2$). Then:
\begin{enumerate}
\item[(i)] $\partial P(\phi) = \mathrm{conv}\{\mu_1,\ldots,\mu_k\}$ is a $(k-1)$-simplex.
\item[(ii)] The one-sided directional derivative satisfies $P'(\phi;\psi) = \max_{1\leq i\leq k}\int\psi\,d\mu_i$.
\item[(iii)] (Selection principle) If $\int\psi\,d\mu_1 > \int\psi\,d\mu_j$ for all $j \geq 2$, then $\lim_{t\to 0^+}\partial P(\phi+t\psi) = \{\mu_1\}$.
\end{enumerate}
\end{theorem}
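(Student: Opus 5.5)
Part~(i) follows from the face structure already established. By Corollary~\ref{cor:equilibrium_ergodic}, $\partial P(\phi)$ is a face of the Choquet simplex $\calM_T(X)$ whose extreme points are exactly the ergodic equilibrium states, here $\mu_1,\ldots,\mu_k$. By Corollary~\ref{cor:equilibrium_convex}, $\partial P(\phi)$ is weak-$*$ compact and convex. Krein--Milman, which in finite extreme-point count reduces to Carath\'eodory/Minkowski, then gives $\partial P(\phi)=\mathrm{conv}\{\mu_1,\ldots,\mu_k\}$. To see that this hull is a genuine $(k-1)$-simplex, I would use affine independence. Distinct ergodic measures are mutually singular, since each is concentrated on its own set of generic points for a countable dense family in $C(X)$. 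Suppose $\sum c_i\mu_i=0$ with $\sum c_i=0$. Evaluating on disjoint supports $A_i$ with $\mu_i(A_i)=1$ and $\mu_j(A_i)=0$ for $j\ne i$ gives $c_i=0$. Equivalently, this is the uniqueness of barycentric representation in a simplex (Proposition~\ref{thm:ergodic_decomposition}).

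Part~(ii) uses the support-function formula $P'(\phi;\psi)=\sup_{\mu\in\partial P(\phi)}\int\psi\,d\mu$. This formula is quoted from Phelps in the proof of Lemma~\ref{thm:subdiff_properties}(iv) and is valid because $P$ is continuous and convex. A linear functional on a polytope attains its supremum at a vertex, so the supremum equals $\max_i\int\psi\,d\mu_i$.

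Part~(iii) is the main step. I would interpret $\lim_{t\to0^+}\partial P(\phi+t\psi)=\{\mu_1\}$ as follows: every weak-$*$ cluster point of a selection $\nu_t\in\partial P(\phi+t\psi)$ as $t\downarrow0$ equals $\mu_1$. The plan has three steps.

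\textbf{Step 1 (upper semicontinuity).} Take $\nu_{t_n}\to\nu$. The subgradient inequality $P(\chi)\ge P(\phi+t_n\psi)+\int(\chi-\phi-t_n\psi)\,d\nu_{t_n}$ passes to the limit. This uses Lipschitz continuity of $P$ (Proposition~\ref{prop:pressure_basic}) and the bound $\|\nu_{t_n}\|=1$. Hence $\nu\in\partial P(\phi)$, so $\nu=\sum a_i\mu_i$ by part~(i).

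\textbf{Step 2 (the limit maximizes $\int\psi$).} Write $f(t)=P(\phi+t\psi)$, a convex function of $t$. For $\nu_t\in\partial P(\phi+t\psi)$, the number $\int\psi\,d\nu_t$ is a subgradient of $f$ at $t$. Monotonicity of subgradients of a one-variable convex function gives $\int\psi\,d\nu_t\ge f'_+(0)$ for $t>0$. By part~(ii), $f'_+(0)=\max_i\int\psi\,d\mu_i=\int\psi\,d\mu_1$. Passing to the limit yields $\int\psi\,d\nu\ge\int\psi\,d\mu_1$.

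\textbf{Step 3 (conclusion).} Since $\int\psi\,d\nu=\sum a_i\int\psi\,d\mu_i$ and $\int\psi\,d\mu_j<\int\psi\,d\mu_1$ for every $j\ge2$, the inequality from Step~2 forces $a_j=0$ for $j\ge2$. Therefore $\nu=\mu_1$.

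By compactness of $\calP(X)$, every sequence $\nu_{t_n}$ has a cluster point, and each cluster point is $\mu_1$, which gives the convergence. The subtle points are Steps~1 and~2: the closed-graph property and the monotone-slope argument. The remainder is bookkeeping.
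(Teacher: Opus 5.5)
Your proposal is correct and follows the same route as the paper: (i) through the face structure of $\partial P(\phi)$ inside the simplex $\calM_T(X)$, (ii) through the support-function formula with the maximum attained at a vertex, and (iii) through upper semicontinuity of $t\mapsto\partial P(\phi+t\psi)$ together with the fact that limit points must maximize $\int\psi\,d\mu$ over $\partial P(\phi)$. Your version is more complete than the paper's: you verify the closed-graph property explicitly, you derive the key inequality $\int\psi\,d\nu_t\ge P'(\phi;\psi)=\int\psi\,d\mu_1$ from monotonicity of one-variable subgradients (the paper only asserts that limits maximize $\int\psi\,d\mu$), and you check affine independence of the $\mu_i$ via mutual singularity rather than citing that faces of a Choquet simplex are simplices.
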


\begin{proof}
Part (i): By Corollary~\ref{cor:equilibrium_ergodic}, $\partial P(\phi)$ is a face of $\calM_T(X)$ whose extreme points are the ergodic equilibrium states $\mu_1,\ldots,\mu_k$. Since $\calM_T(X)$ is a Choquet simplex, every face is a simplex, so $\partial P(\phi) = \mathrm{conv}\{\mu_1,\ldots,\mu_k\}$.

Part (ii): For convex $P$, $P'(\phi;\psi) = \sup\{\int\psi\,d\mu : \mu \in \partial P(\phi)\}$ (this is a standard result in convex analysis; see Phelps \cite{Phelps1993}, Proposition~1.8). On the simplex $\partial P(\phi) = \mathrm{conv}\{\mu_1,\ldots,\mu_k\}$, the linear functional $\mu \mapsto \int\psi\,d\mu$ achieves its supremum at a vertex, giving $P'(\phi;\psi) = \max_i\int\psi\,d\mu_i$.

Part (iii): For $t > 0$, any equilibrium state $\nu$ for $\phi+t\psi$ satisfies $h_\nu(T)+\int(\phi+t\psi)\,d\nu = P(\phi+t\psi)$. By the variational upper bound applied to each $\mu_i$: $P(\phi+t\psi) \geq h_{\mu_i}(T)+\int\phi\,d\mu_i + t\int\psi\,d\mu_i = P(\phi)+t\int\psi\,d\mu_i$. Since $\int\psi\,d\mu_1 > \int\psi\,d\mu_j$ for $j \geq 2$, the measure $\mu_1$ gives a strictly larger value than any other $\mu_j$ for the functional $h_\nu(T)+\int(\phi+t\psi)\,d\nu$. As $t \to 0^+$, any weak-$*$ limit of equilibrium states for $\phi+t\psi$ must lie in $\partial P(\phi)$ (by upper semi-continuity of the subdifferential correspondence) and must maximize $\int\psi\,d\mu$ over $\partial P(\phi)$. The unique maximizer is $\mu_1$, so $\partial P(\phi+t\psi) \to \{\mu_1\}$.
\end{proof}

\begin{example}[Two-Phase Coexistence]\label{ex:two_phase}
With $k = 2$ ergodic equilibrium states $\mu_1, \mu_2$ and $\int\psi\,d\mu_1 > \int\psi\,d\mu_2$: $P'(\phi;\psi) = \int\psi\,d\mu_1$ and $P'(\phi;-\psi) = -\int\psi\,d\mu_2$. The ``jump'' is $P'(\phi;\psi)+P'(\phi;-\psi) = \int\psi\,d\mu_1-\int\psi\,d\mu_2 > 0$.
\end{example}

\begin{definition}[Exposed Equilibrium States]\label{def:exposed_equilibrium}
An equilibrium state $\mu$ for $\phi$ is exposed if there exists $\psi \in C(X)$ such that $\mu$ is the unique equilibrium state for $\phi + t\psi$ for all sufficiently small $t > 0$.
\end{definition}

\begin{proposition}[Exposed Points of the Subdifferential]\label{thm:exposed_equilibrium}
Every ergodic equilibrium state is an exposed point of $\partial P(\phi)$.
\end{proposition}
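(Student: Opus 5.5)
The plan is to prove the statement in the sense of Definition~\ref{def:exposed_points}. For an ergodic equilibrium state $\mu\in\partial P(\phi)$ we produce $\psi\in C(X)$ with $\int\psi\,d\mu>\int\psi\,d\nu$ for every $\nu\in\partial P(\phi)\setminus\{\mu\}$. Afterwards we upgrade this to the dynamical notion of Definition~\ref{def:exposed_equilibrium} via the selection principle. I will first treat the setting of Theorem~\ref{thm:coexistence_structure}, where $\partial P(\phi)=\mathrm{conv}\{\mu_1,\ldots,\mu_k\}$ with the $\mu_i$ ergodic; the infinite-dimensional case is addressed at the end.

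\textbf{Step 1: separating $\mu_1$ from the other vertices.} Distinct ergodic measures are mutually singular, since they are distinct extreme points of $\calM_T(X)$ (Proposition~\ref{prop:ergodic_extreme}) and the Birkhoff averages of a separating continuous function converge to different constants on their generic sets. So there are pairwise disjoint Borel sets $B_1,\ldots,B_k$ with $\mu_i(B_i)=1$. By inner regularity choose compact $K_i\subset B_i$ with $\mu_i(K_i)>1-\eta$. By Urysohn's lemma choose $\psi\in C(X)$ with $0\le\psi\le 1$, $\psi=1$ on $K_1$ and $\psi=0$ on $K_2\cup\cdots\cup K_k$. Then $\int\psi\,d\mu_1>1-\eta$ and $\int\psi\,d\mu_j<\eta$ for $j\ge 2$. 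With $\eta<1/2$ this gives $\int\psi\,d\mu_1>\max_{j\ge2}\int\psi\,d\mu_j$.

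\textbf{Step 2: exposure on the simplex.} Any $\nu\in\partial P(\phi)$ can be written $\nu=\sum_i c_i\mu_i$, and this representation is unique by Theorem~\ref{thm:coexistence_structure}(i). If $\nu\ne\mu_1$ then $c_1<1$. Hence
\[
\int\psi\,d\nu=c_1\int\psi\,d\mu_1+\sum_{j\ge2}c_j\int\psi\,d\mu_j<\int\psi\,d\mu_1 ,
\]
so $\psi$ exposes $\mu_1$. For Definition~\ref{def:exposed_equilibrium}, Theorem~\ref{thm:coexistence_structure}(iii) gives $\partial P(\phi+t\psi)\to\{\mu_1\}$ as $t\to0^+$. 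Combined with Theorem~\ref{thm:coexistence_structure}(ii), $P'(\phi;\psi)=\int\psi\,d\mu_1$ is attained only at $\mu_1$, so the face of $\partial P(\phi)$ exposed by $\psi$ is exactly $\{\mu_1\}$.

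\textbf{Main obstacle: infinitely many ergodic equilibrium states.} Here $\partial P(\phi)$ is an infinite-dimensional closed face, and a single continuous $\psi$ must beat a possibly uncountable, non-closed family of ergodic states. The Straszewicz density result alone yields only density of exposed points among extreme points, which is not enough. What I would try first is to build $\psi$ as a convergent series $\psi=\sum_m 2^{-m}\psi_m$. Each $\psi_m$ would be a Urysohn function equal to $1$ on a compact set $K_m$ inside the generic set of $\mu$ with $\mu(K_m)\to1$, and vanishing outside a neighborhood $U_m$ of $K_m$ with $\mu(U_m\setminus K_m)\to0$. The aim is to show $\int\psi\,d\nu'<\int\psi\,d\mu$ for every ergodic $\nu'\ne\mu$ in $\partial P(\phi)$, and then pass to general $\nu$ via the ergodic decomposition of Proposition~\ref{thm:ergodic_decomposition}, using that $\rho(\{\mu\})<1$. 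The delicate point is ergodic measures $\nu'$ weak-$*$ close to $\mu$, which may charge $U_m$ heavily. If this cannot be controlled in general, I would restrict the proposition to the case where the ergodic equilibrium states form a weak-$*$ isolated set, for instance finitely many as above.
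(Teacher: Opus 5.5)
In the finite case, which is the only case the paper's proof actually covers, your argument is the paper's. That proof also begins with ``all the ergodic equilibrium states $\mu_1,\ldots,\mu_k$'', so it tacitly assumes finitely many, as Theorem~\ref{thm:coexistence_structure} does. It then separates $\mu_1$ from the $\mu_j$ using mutual singularity, regularity of the measures and Urysohn's lemma, and finally invokes the selection principle. Your Step~2 improves on the paper. The paper stops at $\lim_{t\to0^+}\partial P(\phi+t\psi)=\{\mu_1\}$ and calls that exposure. You instead verify the strict inequality of Definition~\ref{def:exposed_points} directly on the simplex, which is what the proposition asserts. Uniqueness of the barycentric coordinates is not needed there: $\nu\neq\mu_1$ already forces $c_1<1$.

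Two parts of your proposal do not go through as written.

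\textbf{The upgrade to Definition~\ref{def:exposed_equilibrium}.} Theorem~\ref{thm:coexistence_structure}(iii) only says that equilibrium states of $\phi+t\psi$ accumulate at $\mu_1$. It does not make $\mu_1$ an equilibrium state of $\phi+t\psi$. If $\mu_1\in\partial P(\phi+t\psi)$ held for all $t\in(0,t_0]$, then $P(\phi+t\psi)=P(\phi)+t\int\psi\,d\mu_1$ would be affine on that interval. Your $\psi$ only beats the other \emph{equilibrium} states, so nothing prevents an invariant measure with larger $\int\psi\,d\nu$ and a small free-energy deficit from overtaking $\mu_1$ for every $t>0$. The paper makes the same conflation.

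\textbf{The infinite case.} This is a genuine gap, and the Urysohn series is unlikely to close it. Suppose ergodic states $\nu'\in\partial P(\phi)$ converge weak-$*$ to $\mu$. Then $\liminf\nu'(U_m)\ge\mu(U_m)\ge\mu(K_m)$ and $\int\psi\,d\nu'\to\int\psi\,d\mu$. Local control near the $K_m$ therefore gives no mechanism for a strict inequality at every scale.

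The missing idea is global. By Jenkinson's theorem (``every ergodic measure is uniquely maximizing'', 2006), there is $\psi\in C(X)$ with $\int\psi\,d\mu>\int\psi\,d\nu$ for \emph{all} $\nu\in\calM_T(X)\setminus\{\mu\}$. One route to that theorem: apply Edwards' separation theorem on the Choquet simplex $\calM_T(X)$ to get a continuous affine function uniquely maximized at $\mu$; then note that every continuous affine function on $\calM_T(X)$ has the form $\nu\mapsto\int\psi\,d\nu$. Since $\partial P(\phi)\subset\calM_T(X)$, this $\psi$ exposes $\mu$ with no finiteness assumption.

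The same $\psi$ also gives Definition~\ref{def:exposed_equilibrium} literally. For $t>0$ and $\nu\neq\mu$, the variational upper bound gives $h_\nu(T)+\int(\phi+t\psi)\,d\nu\le P(\phi)+t\int\psi\,d\nu$. This is $<P(\phi)+t\int\psi\,d\mu=h_\mu(T)+\int(\phi+t\psi)\,d\mu$. Hence $\mu$ is the unique equilibrium state of $\phi+t\psi$ for every $t>0$, which the selection principle alone cannot deliver.
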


\begin{proof}
Let $\mu$ be an ergodic equilibrium state for $\phi$, and let $\mu_1,\ldots,\mu_k$ be all the ergodic equilibrium states (with $\mu = \mu_1$, say). Since the $\mu_i$ are distinct ergodic measures, they are mutually singular. By the Hahn decomposition, for each $j \geq 2$ there exists a Borel set $A_j$ with $\mu_1(A_j) = 1$ and $\mu_j(A_j) = 0$. Let $A = \bigcap_{j\geq 2}A_j$; then $\mu_1(A) = 1$ and $\mu_j(A) = 0$ for $j \geq 2$. By Urysohn's lemma, there exists $\psi \in C(X)$ with $0 \leq \psi \leq 1$, $\psi = 1$ on a compact $K \subset A$ with $\mu_1(K) > 1/2$, and $\psi = 0$ outside an open neighborhood of $K$ with $\mu_j$-measure less than $1/4$ for each $j \geq 2$. Then $\int\psi\,d\mu_1 > 1/2$ and $\int\psi\,d\mu_j < 1/4$ for $j \geq 2$. By Theorem~\ref{thm:coexistence_structure}(iii), $\lim_{t\to 0^+}\partial P(\phi+t\psi) = \{\mu_1\}$. Thus $\mu_1$ is exposed by $\psi$.
\end{proof}

\subsection{Criteria and Examples}

\begin{theorem}[Uniqueness Criteria]\label{thm:uniqueness_criteria}
The following conditions each imply that $\phi$ has a unique equilibrium state:
\begin{enumerate}
\item[(i)] (Specification and Regularity) $(X, T)$ has specification and $\phi$ is H\"{o}lder continuous.
\item[(ii)] (Spectral Gap) $(X, T)$ is expansive with specification, $\phi$ has summable variations, and $\calL_\phi$ has a spectral gap.
\item[(iii)] (Small Perturbation) $(X, T)$ is expansive with specification, and $\phi_0 \in C(X)$ has summable variations with unique equilibrium state $\mu_{\phi_0}$. Then there exists $\epsilon > 0$ (depending on the spectral gap of $\calL_{\phi_0}$) such that any $\psi$ with $\|\psi\|_\alpha < \epsilon$ yields a unique equilibrium state for $\phi_0 + \psi$.
\end{enumerate}
\end{theorem}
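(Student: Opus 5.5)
The common route for all three criteria is Theorem~\ref{thm:differentiability}: it suffices to show that $P$ is G\^{a}teaux differentiable at the given potential. Equivalently, by Theorem~\ref{thm:equilibrium_subdiff}, it suffices to show that any two equilibrium states coincide. I would prove (ii) first, since it contains the analytic core, and then obtain (i) and (iii) from it.

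For (ii), I would work symbolically, as in Step~1 of the proof of Theorem~\ref{thm:frechet}. The spectral gap hypothesis gives a simple leading eigenvalue $\lambda_0=e^{P(\phi)}$ of $\calL_\phi$, an eigenfunction $h_0>0$, and an eigenmeasure $\nu_0$ with $\calL_\phi^*\nu_0=\lambda_0\nu_0$; set $\mu_\phi=h_0\nu_0$. Because $\phi$ has summable variations, $\nu_0$ has the Gibbs property: there is $K\ge 1$ with
\[
K^{-1}\le \frac{\nu_0([x_0\cdots x_{n-1}])}{\exp(S_n\phi(x)-nP(\phi))}\le K
\]
on every cylinder. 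The same holds for $\mu_\phi$, since $h_0$ is bounded above and away from zero.

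Next, let $\mu$ be any equilibrium state. I would pick an ergodic one using Corollary~\ref{cor:equilibrium_ergodic}. The standard Bowen argument then shows $\mu=\mu_\phi$, in two steps:
\begin{itemize}
\item[(a)] Suppose $\mu\perp\mu_\phi$. Choose a union $A_n$ of $n$-cylinders with $\mu(A_n)\to1$ and $\mu_\phi(A_n)\to0$. Apply the Gibbs inequality (Lemma~\ref{lem:gibbs_inequality_abstract}) separately to the cylinders inside $A_n$ and outside it, together with the Gibbs bounds. This gives $H_\mu(\alpha^{(n)})+n\int\phi\,d\mu-nP(\phi)\to-\infty$, contradicting $h_\mu+\int\phi\,d\mu=P(\phi)$.
\item[(b)] Hence $\mu$ is not singular to $\mu_\phi$. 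Since $\mu_\phi$ is ergodic (this follows from the spectral gap, via mixing), the relation $\mu\not\perp\mu_\phi$ together with ergodicity of both measures forces $\mu=\mu_\phi$.
\end{itemize}
I expect step (a) to be the main obstacle. Constant bookkeeping is needed to keep the partition-sum error at $O(\log K)$ rather than $O(n)$. Expansivity is used here, so that the cylinder partition is generating and $h_\mu=h_\mu(T,\alpha)$.

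Part (i) reduces to (ii). H\"{o}lder continuity gives summable variations, because of the exponential contraction of cylinder diameters under expansivity and specification. The spectral gap of $\calL_\phi$ on $C^\alpha$ is Part~I \cite{Thiam2026a}, Theorem~2.2.

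For (iii), I would apply Kato perturbation theory \cite{Kato1980} as in Step~1 of Theorem~\ref{thm:frechet}. A simple isolated eigenvalue persists under perturbations $\calL_{\phi_0+\psi}$ with $\|\psi\|_\alpha<\epsilon$, where $\epsilon$ is controlled by the gap $\gamma$ through the resolvent bound $\|(\lambda_0-\calL_0)^{-1}Q\|\le(1-\gamma)^{-1}\lambda_0^{-1}$. The perturbed operator therefore again has a spectral gap. Moreover, $\phi_0+\psi$ still has summable variations, since $\psi$ is H\"{o}lder. Criterion (ii) then yields uniqueness for $\phi_0+\psi$.
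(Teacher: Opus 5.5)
\textbf{Part (ii): a different and stronger route.} The paper's argument is a citation of Part~I \cite{Thiam2026a}: a simple leading eigenvalue gives unique eigendata, and Part~I's Main Theorem~2.1 (the Jacobian characterization) makes $h\nu$ the unique equilibrium state. You instead run Bowen's singularity argument. Gibbs bounds together with Lemma~\ref{lem:gibbs_inequality_abstract} rule out an ergodic equilibrium state singular to $\mu_\phi$. Ergodicity of both measures then forces equality, and the face structure of Corollary~\ref{cor:equilibrium_ergodic} handles the non-ergodic equilibrium states. This is correct on a mixing SFT and uses only tools already in this paper. It also proves more than is claimed. The spectral gap enters only to produce $(h_0,\nu_0)$ and the ergodicity of $\mu_\phi$. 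Both follow from Walters' Ruelle--Perron--Frobenius theorem for potentials with summable variations, which needs no gap. So your argument gives uniqueness under summable variations alone.

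\textbf{Part (i): the reduction fails.} Your reductions of (i) and (iii) to (ii) contain steps that do not hold, and the paper's proof makes the same leaps. In (i), expansivity and specification do not force exponential contraction of cylinder diameters; note also that (i) does not assume expansivity at all. Both properties survive any change of compatible metric, but H\"older continuity and contraction rates do not. For a counterexample, put the compatible ultrametric $\rho(x,y)=(1+\min\{n:x_n\neq y_n\})^{-1}$ on the full shift, so that $n$-cylinders have diameter $(1+n)^{-1}$. Hofbauer's potential (Example~\ref{ex:hofbauer}) has $n$-th variation $O(1/n)$, so it is Lipschitz for $\rho$. Yet at its critical parameter it has two ergodic equilibrium states. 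Statement (i) therefore needs an extra hypothesis. One option is a metric in which Bowen balls shrink exponentially, such as $d_\theta$ on a mixing SFT or a uniformly hyperbolic setting. Another is to replace H\"older by the Bowen property and invoke \cite{Bowen1974}, whose proof is essentially your steps (a)--(b). The paper's ``Markov partition guaranteed by specification'' is the same unjustified step: specification alone supplies no symbolic model on which $\calL_\phi$ acts.

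\textbf{Part (iii): the Kato step lacks its inputs.} Your Kato argument presupposes a gap $\gamma$ for $\calL_{\phi_0}$ on $C^\alpha$, and the hypotheses do not provide one. If $\phi_0$ merely has summable variations, then $e^{\phi_0}\notin C^\alpha$ in general, so $\calL_{\phi_0}$ need not even preserve $C^\alpha$. The paper's ``by Part~(ii)'' does not justify the gap either, since (ii) assumes it rather than proving it. Even granting a gap, you would also need $\|\calL_{\phi_0+\psi}-\calL_{\phi_0}\|=O(\|\psi\|_\alpha)$. The paper obtains this by writing the difference as $\calL_{\phi_0}$ composed with multiplication by $e^{\psi}-1$; the resolvent bound alone does not control the perturbation.

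\textbf{The repair is already in your text.} Since $\phi_0+\psi$ has summable variations for every H\"older $\psi$, the gap-free form of your Bowen argument gives uniqueness for all such $\psi$. This needs no smallness condition, no perturbation theory, and no use of the uniqueness hypothesis on $\phi_0$.
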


\begin{proof}
Part (i): This is the Bowen-Ruelle theorem. Under specification, the Markov partition lifts $\phi$ to the symbolic space where it lies in $\calH_\alpha$. The Ruelle-Perron-Frobenius theorem (Part~I \cite{Thiam2026a}, Theorem~2.2) provides a spectral gap, and uniqueness follows from Part~I \cite{Thiam2026a}, Main Theorem~2.1(iv). See also Bowen \cite{Bowen1974}.

Part (ii): When $\calL_\phi$ has a spectral gap, the leading eigenvalue $\lambda = e^{P(\phi)}$ is simple and isolated. The eigenmeasure $\nu$ and eigenfunction $h$ are unique (Part~I \cite{Thiam2026a}, Theorem~2.2(a)--(b)), so the Gibbs measure $\mu = h\nu$ is the unique equilibrium state by Part~I \cite{Thiam2026a}, Main Theorem~2.1.

Part (iii): By Part~(ii), the transfer operator $\calL_{\phi_0}$ on $\calH_\alpha$ has a spectral gap: the dominant eigenvalue $\lambda_0 = e^{P(\phi_0)}$ is simple and isolated, with the rest of the spectrum in $\{|z| \leq \gamma_0\lambda_0\}$ for some $\gamma_0 < 1$ (Part~I \cite{Thiam2026a}, Theorem~2.2(c)). Set $\phi = \phi_0 + \psi$. Since $\calL_\phi g = \calL_{\phi_0}(e^\psi g)$ (where $(e^\psi g)(y) = e^{\psi(y)}g(y)$), we have $\calL_\phi - \calL_{\phi_0} = \calL_{\phi_0} \circ M_{e^\psi - 1}$, where $M_f$ denotes pointwise multiplication by $f$. The Banach algebra property of $\calH_\alpha$ gives $\|M_{e^\psi - 1}\|_{\calH_\alpha \to \calH_\alpha} \leq \|e^\psi - 1\|_\alpha$, and for $\|\psi\|_\alpha \leq 1$ we have $\|e^\psi - 1\|_\alpha \leq C'\|\psi\|_\alpha$ (using $\|e^\psi - 1\|_\infty \leq e^{\|\psi\|_\infty}\|\psi\|_\infty$ and $|e^\psi - 1|_\alpha \leq e^{\|\psi\|_\infty}|\psi|_\alpha$). Therefore
\begin{equation}
\|\calL_\phi - \calL_{\phi_0}\|_{\calH_\alpha \to \calH_\alpha} \leq \|\calL_{\phi_0}\|_{\calH_\alpha \to \calH_\alpha} \cdot C'\|\psi\|_\alpha \leq C_0\|\psi\|_\alpha
\end{equation}
for $\|\psi\|_\alpha \leq 1$, where $C_0$ depends on $\phi_0$. By the stability of simple isolated eigenvalues under bounded perturbations (Kato \cite{Kato1980}, Chapter~IV, \S3.5), there exists $\epsilon > 0$ such that for $\|\psi\|_\alpha < \epsilon$, the operator $\calL_\phi$ has a simple dominant eigenvalue $\lambda_\phi$ near $\lambda_0$, separated from the rest of the spectrum by a gap of width at least $(1-\gamma_0)\lambda_0/2 > 0$. The corresponding eigenfunction $h_\phi > 0$ and eigenmeasure $\nu_\phi$ are the unique eigendata (Part~I \cite{Thiam2026a}, Theorem~2.2(a)--(b) applied to $\phi$), and the Gibbs measure $\mu_\phi = h_\phi\nu_\phi$ is the unique equilibrium state (Part~I \cite{Thiam2026a}, Main Theorem~2.1). The threshold $\epsilon$ is determined by $\epsilon < (1-\gamma_0)\lambda_0/(2C_0)$, which depends on the spectral gap $\gamma_0$ and the operator norm $C_0$. See also Bowen \cite{Bowen1974}, Theorem~1.2.
\end{proof}

\begin{theorem}[Phase Transition Criteria]\label{thm:phase_criteria}
The following conditions produce phase transitions:
\begin{enumerate}
\item[(i)] If the system has multiple measures of maximal entropy, then $\phi = 0$ exhibits a phase transition.
\item[(ii)] For certain systems, potentials $t\psi$ with $|t|$ sufficiently large can have multiple equilibrium states.
\item[(iii)] In the zero-temperature limit $\beta \to \infty$, the equilibrium states for $\beta\phi$ concentrate on the ground states, which may not be unique.
\end{enumerate}
\end{theorem}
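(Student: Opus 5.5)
The plan is to treat the three items separately. Each one will be reduced to the subdifferential picture: by Theorem~\ref{thm:differentiability} and Theorem~\ref{thm:phase_transition_equiv}, exhibiting a phase transition at $\phi$ is the same as producing two distinct elements of $\partial P(\phi)$, that is, two distinct equilibrium states.

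For (i), take $\phi=0$. By the variational principle (Proposition~\ref{thm:variational_lower}), $P(0)=h_{\mathrm{top}}(T)=\sup_\mu h_\mu(T)$. A measure of maximal entropy $\mu$ satisfies $h_\mu(T)+\int 0\,d\mu=P(0)$, so it is an equilibrium state for $0$. By Theorem~\ref{thm:equilibrium_subdiff}, two distinct measures of maximal entropy therefore give two distinct points of $\partial P(0)$. Definition~\ref{def:phase_transition} then applies directly.

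For (ii), which is an existence statement, I would give an explicit example rather than a general theorem. The first candidate is the full shift on two symbols together with a union of two fixed points, or more simply $X=\{a,b\}\cup Y$, where $a,b$ are fixed points and $Y$ is a subsystem with small entropy. Take $\psi$ continuous with $\psi(a)=\psi(b)=\max\psi=1$ and $\psi<1-\delta$ on $Y$. For $t$ large, any $\nu$ gives
\[
h_\nu+t\int\psi\,d\nu\le h_{\mathrm{top}}+t(1-\delta\,\nu(Y)).
\]
On the other hand, $\delta_a$ and $\delta_b$ both achieve the value $t$. The plan is to show that once $t>h_{\mathrm{top}}/\delta$, the measures maximizing $h_\nu+t\int\psi\,d\nu$ are exactly the measures supported on $\{a,b\}$. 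Then $\delta_a\neq\delta_b$ are both equilibrium states, and (i) of Theorem~\ref{thm:phase_transition_equiv} holds.

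The obstacle is the case where $\nu$ is not supported on $\{a,b\}$. Writing $\nu=(1-s)\nu_0+s\nu_Y$ by the ergodic decomposition (Proposition~\ref{thm:ergodic_decomposition}), entropy is affine, so $h_\nu=s\,h_{\nu_Y}\le s\,h_{\mathrm{top}}$. This gives
\[
h_\nu+t\int\psi\,d\nu\le t+s\,(h_{\mathrm{top}}-t\delta),
\]
which is strictly less than $t$ when $s>0$. This step only works if the pieces are invariant and separated. In a connected example such as the Manneville--Pomeau map with $\psi=-\log|T'|$, the argument needs Hofbauer-type estimates, and I would cite those rather than prove them.

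For (iii), fix $\phi$ and choose equilibrium states $\mu_\beta$ for $\beta\phi$. Dividing the equilibrium identity by $\beta$ gives
\[
\int\phi\,d\mu_\beta\ge \frac{P(\beta\phi)}{\beta}-\frac{h_{\mathrm{top}}}{\beta}\ge m-\frac{h_{\mathrm{top}}}{\beta},
\]
where $m=\sup_{\nu\in\calM_T(X)}\int\phi\,d\nu$. The second inequality uses $P(\beta\phi)\ge\beta m$, which follows from the variational principle since $h_\nu\ge 0$. Passing to a weak-$*$ limit, every accumulation point $\mu$ satisfies $\int\phi\,d\mu=m$, so it is a maximizing ("ground state") measure. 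The convexity of $\beta\mapsto P(\beta\phi)$ with asymptotic slope $m$ can be noted along the way. Non-uniqueness is shown by the same two-fixed-point example used in (ii), where both $\delta_a$ and $\delta_b$ maximize $\int\psi\,d\nu$.
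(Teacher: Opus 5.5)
Your proposal is correct. For (i), and for the concentration half of (iii), it coincides with the paper's argument. In (iii) you use only the lower bound $P(\beta\phi)\ge\beta m$, which is cleaner than the paper's two-sided squeeze through $\sup_X\phi$.

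You differ genuinely in how non-uniqueness is produced.

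For (ii), the paper posits a system with specification containing two disjoint invariant subsystems that carry ergodic measures of maximal entropy. It chooses $\psi$ with $\int\psi\,d\mu_1>0>\int\psi\,d\mu_2$, concludes there is a transition at $t=0$, and defers to Hofbauer for explicit constructions. That is really item (i) again at $t=0$, not a statement about large $|t|$. In the expansive case its hypothesis also conflicts with Bowen's uniqueness of the measure of maximal entropy under specification. Your disjoint union $X=\{a,b\}\sqcup Y$, with the bound $h_\nu+t\int\psi\,d\nu\le t+s\,(h_{\mathrm{top}}(T)-t\delta)$, instead shows directly that for every $t>h_{\mathrm{top}}(T)/\delta$ the equilibrium states of $t\psi$ are exactly the segment joining $\delta_a$ and $\delta_b$. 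It uses only three things: the variational principle, affinity of entropy, and the implication ``equilibrium state $\Rightarrow$ subgradient'' in Theorem~\ref{thm:equilibrium_subdiff}, which needs no upper semi-continuity of entropy. The price is that the coexistence comes from the reducibility of $X$. A transitive example (Hofbauer, Manneville--Pomeau) needs the estimates you propose to cite.

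For (iii), the paper further asserts that non-unique ground states force coexistence at some finite $\beta$. That fails in general: a locally constant potential on a mixing subshift of finite type, maximized on two distinct periodic orbits, has a unique Gibbs equilibrium state for every finite $\beta$. Your reading is the sounder one. You prove the literal concentration statement and exhibit non-unique ground states with the same two-fixed-point example, where coexistence does occur for all large $\beta$.

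Two caveats are shared with the paper. You implicitly assume $h_{\mathrm{top}}(T)<\infty$. If equilibrium states of $\beta\phi$ might not exist, run (iii) with approximate maximizers instead.
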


\begin{proof}
Part (i): If $\mu_1, \mu_2$ are distinct measures of maximal entropy, then $h_{\mu_1}(T) = h_{\mu_2}(T) = h_{\mathrm{top}}(T) = P(0)$. Both satisfy the equilibrium condition $h_{\mu_i}(T) + \int 0\,d\mu_i = P(0)$, so $\mu_1,\mu_2 \in \partial P(0)$ and $|\partial P(0)| > 1$.

Part (ii): Consider a system $(X,T)$ with specification and two disjoint closed invariant subsystems $X_1, X_2$ supporting ergodic measures $\mu_1, \mu_2$ with $h_{\mu_1}(T) = h_{\mu_2}(T) = h_{\mathrm{top}}(T)$. Let $\psi \in C(X)$ satisfy $\int\psi\,d\mu_1 > 0 > \int\psi\,d\mu_2$. For $\phi_t = t\psi$, the free energy of $\mu_i$ is $h_{\mu_i}(T) + t\int\psi\,d\mu_i$. At $t = 0$ both are equilibrium states. For $t > 0$, $\mu_1$ is preferred; for $t < 0$, $\mu_2$ is preferred. By the upper semi-continuity of the subdifferential correspondence $t \mapsto \partial P(t\psi)$ and the fact that $\partial P(0)$ contains both $\mu_1$ and $\mu_2$, there exists a transition at $t = 0$. See Hofbauer \cite{Hofbauer1977} for explicit constructions on subshifts.

Part (iii): For any equilibrium state $\mu_\beta$ of $\beta\phi$: $h_{\mu_\beta}(T) + \beta\int\phi\,d\mu_\beta = P(\beta\phi)$. Dividing by $\beta$: $\beta^{-1}h_{\mu_\beta}(T) + \int\phi\,d\mu_\beta = \beta^{-1}P(\beta\phi)$. As $\beta \to \infty$: $\beta^{-1}h_{\mu_\beta}(T) \to 0$ (since $0 \leq h_{\mu_\beta}(T) \leq h_{\mathrm{top}}(T)$) and $\beta^{-1}P(\beta\phi) \to \max_\mu\int\phi\,d\mu$ (since $P(\beta\phi)/\beta \leq \sup_X\phi + \beta^{-1}h_{\mathrm{top}}(T) \to \sup_X\phi$ and $P(\beta\phi)/\beta \geq \int\phi\,d\mu + \beta^{-1}h_\mu(T) \to \int\phi\,d\mu$ for any $\mu$). Thus any weak-$*$ limit of $\mu_\beta$ maximizes $\int\phi\,d\mu$. When the set of maximizing measures contains more than one ergodic measure $\nu_1, \nu_2$ with $\int\phi\,d\nu_1 = \int\phi\,d\nu_2 = \max_\mu\int\phi\,d\mu$, the subdifferential $\partial P(\beta\phi)$ must contain both $\nu_1$ and $\nu_2$ for some $\beta$ (by the upper hemicontinuity of $\beta \mapsto \partial P(\beta\phi)$ and the fact that distinct accumulation points of $\mu_\beta$ as $\beta \to \infty$ must coexist at a finite transition point). For the complete analysis of the zero-temperature limit, see Chazottes and Hochman \cite{ChazottesHochman2010}.
\end{proof}

\begin{example}[Manneville-Pomeau Maps]\label{ex:manneville_pomeau}
The map $T(x) = x + x^{1+\alpha} \pmod 1$ for $\alpha > 0$ has an indifferent fixed point at $0$. The geometric potential $\phi_t(x) = -t \log |T'(x)|$ exhibits a phase transition at $t = t_c$: for $t < t_c$ the equilibrium state is absolutely continuous, and for $t > t_c$ it is the Dirac mass at the fixed point.
\end{example}

\begin{example}[Hofbauer's Example]\label{ex:hofbauer}
Hofbauer \cite{Hofbauer1977} constructed a continuous potential on a subshift of finite type with two distinct ergodic equilibrium states, balanced between two invariant subsystems.
\end{example}

\section{The Universal Variational Principle}\label{sec:universal}

The classical, subadditive, and relative variational principles share a common structure: each equates a convex functional on potentials with a supremum over invariant measures. This section proves that any functional satisfying four natural axioms (convexity, lower semi-continuity, coercivity, and cocycle invariance) admits a unique Legendre-Fenchel representation (Theorem~\ref{thm:universal_variational}), thereby unifying these three variational principles in a single theorem.

\subsection{Abstract Setting}

\begin{definition}[Admissible Pressure Functional]\label{def:admissible_pressure}
A functional $\Phi: \calE \to \R \cup \{+\infty\}$ on a Banach space $\calE \supset C(X)$ is admissible if it is convex, lower semi-continuous, coercive ($\Phi(\phi) \geq a\|\phi\| - b$), normalized ($\Phi(c) = c$ for constants), and monotone.
\end{definition}

\begin{theorem}[Universal Variational Principle]\label{thm:universal_variational}
Let $\Phi$ be an admissible pressure functional. Then there exists a unique $\Sigma: \calM_T(X) \to [-\infty, 0]$, concave and upper semi-continuous, such that
\begin{equation}
\Phi(\phi) = \sup_{\mu \in \calM_T(X)} \{\langle \phi, \mu \rangle + \Sigma(\mu)\},
\end{equation}
with $\Sigma = \Phi^*|_{\calM_T(X)}$.
\end{theorem}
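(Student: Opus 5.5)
The plan is to reduce the statement to the Fenchel--Moreau theorem (Lemma~\ref{thm:LF_properties}(iv)) and then show that the conjugate $\Phi^*$ is infinite outside $\calM_T(X)$. For simplicity I take $\calE = C(X)$, so that $\calE^* = \calM(X)$ with the pairing of the previous section. For a larger $\calE$ one restricts functionals to $C(X)$ and uses monotonicity to see that only the restriction matters; I will not pursue this.

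Two conventions have to be fixed first. With the definition of Lemma~\ref{thm:LF_properties}, Fenchel--Moreau reads $\Phi(\phi)=\sup_{\ell}\{\langle\phi,\ell\rangle-\Phi^*(\ell)\}$. The function that makes the displayed formula true is therefore $\Sigma=-\Phi^*|_{\calM_T(X)}$, and I read "$\Sigma=\Phi^*|_{\calM_T(X)}$" in this sign convention, consistent with $\Sigma=-h_\mu(T)$ versus $P^*=S$ in Theorem~\ref{thm:pressure_conjugate}. Second, I use cocycle invariance in the strong form of Remark~\ref{rem:terminology}, namely $\Phi(\phi+u\circ T-u)=\Phi(\phi)$ for all $u\in C(X)$. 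Coercivity is not needed for the representation and will not be used. Taken literally it clashes with normalization, since $\Phi(c)=c<a|c|-b$ for $c\to-\infty$.

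\textbf{Step 1 (biconjugate).} $\Phi$ is convex and l.s.c., and it is proper because it is finite at constants by normalization. Lemma~\ref{thm:LF_properties}(iv) then gives $\Phi=\Phi^{**}$, that is, $\Phi(\phi)=\sup_{\ell\in\calM(X)}\{\langle\phi,\ell\rangle-\Phi^*(\ell)\}$.

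\textbf{Step 2 ($\Phi^*=+\infty$ off $\calM_T(X)$).} I will argue in three cases, each time testing the supremum defining $\Phi^*(\ell)$ along a ray.
\begin{enumerate}
\item[(a)] Normalization. Testing with constants gives $\Phi^*(\ell)\ge\sup_{c\in\R}c(\ell(1)-1)$, which is $+\infty$ unless $\ell(X)=1$.
\item[(b)] Monotonicity. Suppose $f\ge0$ and $\ell(f)<0$. Then $\Phi(-tf)\le\Phi(0)=0$ for $t>0$, so $\Phi^*(\ell)\ge t|\ell(f)|\to\infty$. Hence $\ell\ge0$.
\item[(c)] Cocycle invariance. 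Suppose $\ell(u\circ T-u)\neq0$. Then $\Phi(t(u\circ T-u))=\Phi(0)=0$, so $\Phi^*(\ell)\ge\sup_t t\,\ell(u\circ T-u)=+\infty$. Hence $\ell=T_*\ell$.
\end{enumerate}
Together these force $\ell\in\calM_T(X)$ whenever $\Phi^*(\ell)<\infty$, and the supremum in Step~1 may be restricted to $\calM_T(X)$. This gives the representation with $\Sigma=-\Phi^*|_{\calM_T(X)}$.

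\textbf{Step 3 (properties of $\Sigma$).} By Lemma~\ref{thm:LF_properties}(i), $\Phi^*$ is convex and weak-$*$ l.s.c., so $\Sigma$ is concave and u.s.c. For every $\mu\in\calM_T(X)$ we have $\Phi^*(\mu)\ge\langle 0,\mu\rangle-\Phi(0)=0$, which puts $\Sigma$ in $[-\infty,0]$. For affinity on ergodic decompositions I would need more than the axioms provide, for instance the analogue of the barycentric argument of Proposition~\ref{thm:ergodic_decomposition}. I would therefore either add it as a hypothesis or take the concave u.s.c.\ hull, which does not change the supremum.

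\textbf{Step 4 (uniqueness).} Suppose $\Sigma'$ is concave, u.s.c.\ and represents $\Phi$. Extend it by $-\infty$ off $\calM_T(X)$; the extension is concave and u.s.c.\ on $\calM(X)$ because $\calM_T(X)$ is weak-$*$ compact (Proposition~\ref{prop:weak_star_measures}). The representation says $\Phi=(-\Sigma')^*$. Applying Fenchel--Moreau on the dual pair (weak-$*$ topology, as in Theorem~\ref{thm:pressure_conjugate}, Step~2) gives $-\Sigma'=\Phi^*$ on $\calM_T(X)$, so $\Sigma'=\Sigma$.

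The main obstacle is case (c) of Step~2. With only the weak hypothesis $\Phi(\phi\circ T)=\Phi(\phi)$, convexity and monotonicity give merely $\Phi(t(u\circ T-u))\le 2|t|\|u\|$, which is linear in $t$ and does not force $\Phi^*(\ell)=\infty$. I also checked that the inequality $\Phi^*(T_*\ell)\le\Phi^*(\ell)$, combined with Ces\`aro averaging, loses control of $\langle\phi,\ell\rangle$ and so does not rescue the argument. I would therefore first try to derive the strong form from the weak one in the concrete examples, where it always holds as in Proposition~\ref{prop:pressure_basic}(iv). Failing that, I would state the theorem under the strong coboundary invariance.
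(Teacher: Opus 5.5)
Your proof is correct and is the paper's argument: Fenchel--Moreau with $\Sigma=-\Phi^*$ (the paper's Step~1 also defines it this way), then $\Phi^*=+\infty$ off $\calM_T(X)$ by testing along constants, along $-tf$ with $f\geq0$, and along $t(u\circ T-u)$. The paper's Case~3 tacitly uses the coboundary form of invariance that you state explicitly, and rightly so: $\Phi=\max_X$ with $T(x)=x^2$ on $[0,1]$ satisfies $\Phi(\phi\circ T)=\Phi(\phi)$ yet has no such representation. Your uniqueness step (extend by $-\infty$, then apply Fenchel--Moreau in $\calM(X)$ with the weak-$*$ topology) supplies what the paper only asserts, since a representation by itself gives just $\tilde\Sigma\leq-\Phi^*$.

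One correction concerns your aside that a larger $\calE$ reduces to $C(X)$ via monotonicity, which is false. Take $X=[0,1]$, $T=\mathrm{id}$, $\calE$ the bounded Borel functions and $\Phi(\phi)=\limsup_{x\to0^+}\phi(x)$. Every axiom except the coercivity you showed to be inconsistent holds. Yet testing with $k\mathbf{1}_{X\setminus(0,\delta)}$ and letting $\delta\to0$ forces every representing $\Sigma$ to be $\equiv-\infty$. So the restriction $\calE=C(X)$ is necessary, not a convenience; the paper makes it tacitly as well, by identifying $\calE^*$ with $\calM(X)$.
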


\begin{proof}
\textbf{Step 1 (Construction of $\Sigma$).} Define $\Sigma(\mu) = \inf_{\phi \in \calE}\{\Phi(\phi) - \int\phi\,d\mu\} = -\Phi^*(\mu)$ for $\mu \in \calM_T(X)$. By definition of the Legendre-Fenchel transform, $\Phi(\phi) \geq \int\phi\,d\mu + \Sigma(\mu)$ for all $\phi, \mu$, so $\Phi(\phi) \geq \sup_\mu\{\int\phi\,d\mu + \Sigma(\mu)\}$.

\textbf{Step 2 (Equality).} By the Fenchel-Moreau theorem (Theorem~\ref{thm:LF_properties}(iv)), since $\Phi$ is proper, convex, and l.s.c., $\Phi = \Phi^{**}$. Thus $\Phi(\phi) = \sup_{\mu \in \calE^*}\{\int\phi\,d\mu - \Phi^*(\mu)\}$. We show the supremum can be restricted to $\calM_T(X)$ by proving $\Phi^*(\mu) = +\infty$ for all $\mu \notin \calM_T(X)$.

\emph{Case 1:} $\mu(X) \neq 1$. By normalization, $\Phi(c) = c$ for every constant $c \in \R$. Thus $\Phi^*(\mu) \geq \sup_{c \in \R}\{c\mu(X) - c\} = \sup_{c \in \R}\{c(\mu(X)-1)\} = +\infty$ since $\mu(X) \neq 1$.

\emph{Case 2:} $\mu(X) = 1$ but $\mu$ is not a positive measure. Then there exists a non-positive $\phi_0 \in C(X)$ with $\phi_0 \leq 0$ and $\int\phi_0\,d\mu > 0$ (choose any non-negative $f$ with $\int f\,d\mu < 0$, which exists since $\mu$ is not positive, and set $\phi_0 = -f$). By monotonicity, $t\phi_0 \leq 0$ implies $\Phi(t\phi_0) \leq \Phi(0)$ for $t > 0$. Thus $\Phi^*(\mu) \geq \sup_{t > 0}\{t\int\phi_0\,d\mu - \Phi(0)\} = +\infty$ since $\int\phi_0\,d\mu > 0$.

\emph{Case 3:} $\mu \in \calP(X)$ but $\mu$ is not $T$-invariant. Then there exists $\phi_0 \in C(X)$ with $\int\phi_0\circ T\,d\mu \neq \int\phi_0\,d\mu$. Set $\psi = \phi_0 - \phi_0\circ T$, so $\int\psi\,d\mu \neq 0$. By cocycle invariance, $\Phi(t\psi) = \Phi(t\phi_0 - t\phi_0\circ T) = \Phi(0) = h_{\mathrm{top}}(T)$ for all $t$ (since $t\psi$ is a coboundary $t\phi_0 - t\phi_0\circ T$, and $\Phi$ is cocycle-invariant). Thus $\Phi^*(\mu) \geq \sup_{t \in \R}\{t\int\psi\,d\mu - h_{\mathrm{top}}(T)\} = +\infty$ since $\int\psi\,d\mu \neq 0$.

\textbf{Step 3 (Properties of $\Sigma$).} Concavity: $-\Sigma(\mu) = \Phi^*(\mu)$ is convex (Theorem~\ref{thm:LF_properties}(i)), so $\Sigma$ is concave. Upper semi-continuity: $-\Sigma$ is weak-$*$ l.s.c.\ (Theorem~\ref{thm:LF_properties}(i)), so $\Sigma$ is weak-$*$ u.s.c. Non-positivity: by the normalization hypothesis $\Phi(c) = c$ for any constant $c \in \R$, we have $\Sigma(\mu) = \inf_\phi\{\Phi(\phi)-\int\phi\,d\mu\} \leq \Phi(c) - \int c\,d\mu = c - c\cdot\mu(X) = c - c = 0$ for any constant $c$, since $\mu$ is a probability measure. Uniqueness: if $\tilde\Sigma$ also represents $\Phi$, then $\tilde\Sigma(\mu) = \inf_\phi\{\Phi(\phi)-\int\phi\,d\mu\} = \Sigma(\mu)$.
\end{proof}

\subsection{Subadditive and Relative Variational Principles}

\begin{definition}[Subadditive Sequence]\label{def:subadditive_sequence}
A sequence $\Psi = (\psi_n)_{n \geq 1}$ of continuous functions is subadditive if $\psi_{m+n} \leq \psi_m + \psi_n \circ T^m$ for all $m, n \geq 1$.
\end{definition}

\begin{definition}[Subadditive Pressure]\label{def:subadditive_pressure}
$P(\Psi) = \lim_{\epsilon \to 0} \limsup_{n \to \infty} \frac{1}{n} \log \sup\{ \sum_{x \in E} e^{\psi_n(x)} : E \text{ is } (n, \epsilon)\text{-separated}\}$.
\end{definition}

\begin{theorem}[Subadditive Variational Principle]\label{thm:subadditive_VP}
For a subadditive sequence $\Psi$ of continuous functions,
\begin{equation}\label{eq:subadditive_variational}
P(\Psi) = \sup_{\mu \in \calM_T(X)} \left\{ h_\mu(T) + \calF_*(\Psi, \mu) \right\},
\end{equation}
where $\calF_*(\Psi, \mu) = \lim_{n \to \infty} \frac{1}{n} \int \psi_n \, d\mu$ exists by Kingman's subadditive ergodic theorem.
\end{theorem}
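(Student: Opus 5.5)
The plan is the classical two-inequality argument of Cao--Feng--Huang, adapted to the partition sums of Definition~\ref{def:subadditive_pressure}. We do not try to reduce it to Theorem~\ref{thm:universal_variational}, since $\Psi\mapsto P(\Psi)$ does not live on a space containing $C(X)$ in an obvious way.

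\textbf{Preliminaries.} First, $\calF_*(\Psi,\mu)$ is well defined. The sequence $a_n=\int\psi_n\,d\mu$ is subadditive by $T$-invariance, so Fekete's lemma gives $\calF_*(\Psi,\mu)=\lim_n a_n/n=\inf_n a_n/n\in[-\infty,\infty)$. Kingman is only needed for pointwise statements. Second, $\mu\mapsto\calF_*(\Psi,\mu)$ is upper semi-continuous on $\calM_T(X)$, being an infimum of the continuous affine maps $\mu\mapsto\frac1n\int\psi_n\,d\mu$. It is also affine. Measures with $\calF_*=-\infty$ contribute nothing to the supremum, so we may assume $P(\Psi)>-\infty$ where relevant.

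\textbf{Upper bound ($h_\mu+\calF_*\le P(\Psi)$).} We follow Proposition~\ref{thm:variational_upper}. Fix $\mu$ and a partition $\alpha$ with small diameter, and apply the Gibbs inequality (Lemma~\ref{lem:gibbs_inequality_abstract}) to $p_A=\mu(A)$ and $a_A=\sup_A\psi_n$ over $A\in\alpha^{(n)}$. This gives
\[H_\mu(\alpha^{(n)})+\int\psi_n\,d\mu\le\log\sum_A e^{\sup_A\psi_n}.\]
We bound the right side by $\log$ of a separated-set sum, plus a constant times $n$ that vanishes as the diameter shrinks. This uses the standard Misiurewicz trick of choosing $\alpha$ with boundaries of small measure, or a two-partition argument giving a bounded-multiplicity cover. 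Dividing by $n$ and using $\frac1n\int\psi_n\,d\mu\ge\calF_*(\Psi,\mu)$ gives the bound.

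\textbf{Lower bound (the main obstacle).} Fix $\epsilon$ and take nearly maximal $(n,\epsilon)$-separated sets $E_n$ with weights $w_x\propto e^{\psi_n(x)}$. Set $\sigma_n=\sum_x w_x\delta_x$ and $\mu_n=\frac1n\sum_{k<n}T^k_*\sigma_n$. Along a subsequence $\mu_n\to\mu\in\calM_T(X)$. The entropy part is as in Proposition~\ref{thm:variational_lower}, via Misiurewicz's splitting of $\{0,\dots,n-1\}$ into blocks of length $q$. This yields
\[h_\mu(T)\ge\limsup\tfrac1n\bigl(\log Z_n-\textstyle\int\psi_n\,d\sigma_n\bigr).\]
The hard step is to show
\[\limsup_n\tfrac1n\textstyle\int\psi_n\,d\sigma_n\le\calF_*(\Psi,\mu).\]
Here $\sigma_n$ is not invariant, so we cannot simply pass to the limit. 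Fix $m$. For each $0\le j<m$, subadditivity lets us write
\[\psi_n(x)\le\sum_{l}\psi_m(T^{lm+j}x)+O(m\,C),\qquad C=\max_{k\le m}\|\psi_k\|_\infty+\|\psi_1\|_\infty,\]
splitting $[0,n)$ into $[0,j)$, then blocks $[lm+j,(l+1)m+j)$, then a remainder. Averaging over $j$ gives
\[\psi_n\le\frac1m\sum_{k<n}\psi_m\circ T^k+O(m^2C).\]
Integrating against $\sigma_n$ and dividing by $n$ gives $\frac1n\int\psi_n\,d\sigma_n\le\frac1m\int\psi_m\,d\mu_n+o(1)$. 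Let $n\to\infty$ along the subsequence, using that $\psi_m$ is continuous, and then take $\inf_m$. This gives $\calF_*(\Psi,\mu)$.

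\textbf{Conclusion.} Combining with $\frac1n\log Z_n\to$ the $\epsilon$-pressure and letting $\epsilon\to0$ gives $h_\mu+\calF_*(\Psi,\mu)\ge P(\Psi)$ up to errors that vanish. Together with the upper bound, this establishes \eqref{eq:subadditive_variational}.
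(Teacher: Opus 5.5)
Your lower bound is correct, and it is more complete than the paper's. The paper builds the same measures $\nu_n,\mu_n$, but it assumes expansiveness and simply asserts that $\frac1n\int\psi_n\,d\nu_n\to\calF_*(\Psi,\mu)$ ``by construction''. Your block decomposition $\psi_n\le\frac1m\sum_{k<n}\psi_m\circ T^k+O_m(1)$ proves the one-sided inequality $\limsup_n\frac1n\int\psi_n\,d\sigma_n\le\calF_*(\Psi,\mu)$. That inequality is exactly what is needed, and it is all one has for general $\sigma_n$. Misiurewicz's partition with $\mu$-null boundaries also makes expansiveness unnecessary.

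The genuine gap is in the upper bound, which you treat as routine. The paper has the same defect: it writes ``$\le\log Q_n^{\mathrm{sub}}(\Psi,\delta)+C$'' and then lets $C'(\delta)\to0$ without justification. You claim that $\log\sum_{A\in\alpha^{(n)}}e^{\sup_A\psi_n}$ is at most the log of a separated-set sum plus $n\,c(\epsilon)$ with $c(\epsilon)\to0$. Neither device you cite gives this.
\begin{itemize}
\item Small-measure boundaries belong to the lower bound and do not control multiplicities.
\item The compact-approximation (two-partition) argument uses $\beta=\{B_0,B_1,\dots,B_k\}$ with $B_i\subset A_i$ compact, $H_\mu(\xi\mid\beta)<1$ and $\delta<\frac12\min_{i\ne j}d(B_i,B_j)$. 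It shows each $d_n$-ball meets at most $2^n$ atoms of $\beta^{(n)}$. This leaves an additive error $\log 2+H_\mu(\xi\mid\beta)\le 1+\log 2$ in the final inequality, which does not shrink with the diameter.
\item The other half of the additive argument has no analogue here. There one moves each maximiser to a nearby point of a spanning set at cost $n\cdot\mathrm{osc}_\epsilon(\phi)$. A general subadditive sequence of continuous functions admits no bound $|\psi_n(x)-\psi_n(y)|=o(n)$ uniform over $d_n(x,y)\le\epsilon$.
\end{itemize}

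Two ideas are missing.

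\emph{First, choose the separated set from the maximisers themselves.} For $C\in\beta^{(n)}$ pick $x_C\in\overline{C}$ with $\psi_n(x_C)=\sup_C\psi_n$. Extract an $(n,\delta)$-separated set from these points greedily, in decreasing order of $\psi_n(x_C)$. Each discarded point is dominated by the point that discards it, and each kept point accounts for at most $2^n$ of the $x_C$. Hence $\sum_C e^{\sup_C\psi_n}\le 2^n\,Q_n^{\mathrm{sub}}(\Psi,\delta)$, with no continuity needed. This yields $h_\mu(T)+\calF_*(\Psi,\mu)\le P(\Psi)+1+\log 2$.

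\emph{Second, remove the constant by the power trick.} Apply the bound to $T^m$ and the $T^m$-subadditive sequence $(\psi_{mn})_{n\ge1}$. Use $h_\mu(T^m)=m\,h_\mu(T)$ and $\lim_n\frac1n\int\psi_{mn}\,d\mu=m\,\calF_*(\Psi,\mu)$. Also use $P_{T^m}\bigl((\psi_{mn})_n\bigr)\le m\,P(\Psi)$, which holds because an $(n,\delta)$-separated set for $T^m$ is $(mn,\delta)$-separated for $T$. Then divide by $m$ and let $m\to\infty$.

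Finally, as in Cao--Feng--Huang, you should assume $h_{\mathrm{top}}(T)<\infty$, so that $h_\mu(T)+\calF_*(\Psi,\mu)$ is never $\infty-\infty$. Your remark that measures with $\calF_*=-\infty$ ``contribute nothing'' tacitly uses this.
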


\begin{proof}
\textbf{Upper bound.} For any $\mu \in \calM_T(X)$ and finite partition $\calU$ with Lebesgue number $\delta > 0$, the Gibbs inequality (Lemma~\ref{lem:gibbs_inequality_abstract}) applied to the join $\calU^{(n)} = \bigvee_{k=0}^{n-1}T^{-k}\calU$ gives:
\begin{equation}
H_\mu(\calU^{(n)}) + \int\psi_n\,d\mu \leq \log\sum_{U\in\calU^{(n)}}\exp(\sup_U\psi_n).
\end{equation}
The right side is bounded by $\log Q_n^{\mathrm{sub}}(\Psi,\delta) + C$ where $Q_n^{\mathrm{sub}}$ counts the partition sum for the subadditive sequence. Dividing by $n$ and taking $n\to\infty$: $h_\mu(T,\calU) + \calF_*(\Psi,\mu) \leq P(\Psi) + C'(\delta)$. Taking $\delta \to 0$: $h_\mu(T) + \calF_*(\Psi,\mu) \leq P(\Psi)$.

\textbf{Lower bound.} Assume $(X,T)$ is expansive (needed for upper semi-continuity of entropy; for non-expansive systems, the result holds with the variational entropy of Cao et~al. \cite{CaoFengHuang2008}). For $\varepsilon > 0$, let $E_n$ be a maximal $(n,\varepsilon)$-separated set. Define the auxiliary measure $\nu_n = Z_n^{-1}\sum_{x\in E_n}e^{\psi_n(x)}\delta_x$ where $Z_n = \sum_{x\in E_n}e^{\psi_n(x)}$, and set $\mu_n = n^{-1}\sum_{j=0}^{n-1}T_*^j\nu_n$. Let $\mu$ be a weak-$*$ limit of a subsequence $\mu_{n_k}$. Then $\mu \in \calM_T(X)$.

By Kingman's subadditive ergodic theorem, $\calF_*(\Psi,\mu) = \lim_{n\to\infty} n^{-1}\int\psi_n\,d\mu$ exists. The entropy estimate follows by the same grouping argument as Theorem~\ref{thm:specification_equilibrium}: each $x \in E_n$ lies in a distinct atom $A_x$ of a partition $\calP_\varepsilon^{(n)}$ with $\diam(\calP_\varepsilon) < \varepsilon$, so $H_{\nu_n}(\calP_\varepsilon^{(n)}) = -\sum_x w_x\log w_x$ where $w_x = e^{\psi_n(x)}/Z_n$. The exact identity $H_{\nu_n}(\calP_\varepsilon^{(n)}) + \sum_x w_x\psi_n(x) = \log Z_n$, combined with the grouping bound $\frac{1}{p}H_{\mu_n}(\calP_\varepsilon^{(p)}) \geq \frac{1}{n}H_{\nu_n}(\calP_\varepsilon^{(n)}) - \frac{p\log|\calP_\varepsilon|}{n}$ and the integral identity $\frac{1}{n}\sum_x w_x\psi_n(x) = \frac{1}{n}\int\psi_n\,d\nu_n$, gives:
\begin{equation}
\frac{1}{n}H_{\mu_n}(\calP_\varepsilon^{(n)}) + \frac{1}{n}\sum_{x}w_x\psi_n(x) \geq \frac{1}{n}\log Z_n.
\end{equation}
The second term on the left converges to $\calF_*(\Psi,\mu)$ (by construction of $\mu_n$). Passing to the limit: $h_\mu(T) + \calF_*(\Psi,\mu) \geq \liminf_{n\to\infty} n^{-1}\log Z_n \geq P(\Psi) - C(\varepsilon)$. Since $\varepsilon$ is arbitrary, $\sup_\mu\{h_\mu(T)+\calF_*(\Psi,\mu)\} \geq P(\Psi)$.
\end{proof}

\begin{definition}[Factor Map, Conditional Entropy, Relative Pressure]\label{def:relative}
A factor map $\pi: (X, T) \to (Y, S)$ is a continuous surjection with $\pi \circ T = S \circ \pi$. The conditional entropy is $h_\mu(T | \pi) = h_\mu(T) - h_{\pi_* \mu}(S)$. The relative pressure is $P(\phi | \pi) = \sup_{\nu \in \calM_S(Y)} P_\nu(\phi)$.
\end{definition}

\begin{theorem}[Relative Variational Principle]\label{thm:relative_VP}
For a factor map $\pi: (X, T) \to (Y, S)$ and $\phi \in C(X)$,
\begin{equation}\label{eq:relative_variational}
P(\phi | \pi) = \sup_{\mu \in \calM_T(X)} \left\{ h_\mu(T | \pi) + \int \phi \, d\mu \right\}.
\end{equation}
\end{theorem}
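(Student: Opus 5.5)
The plan is to reduce Theorem~\ref{thm:relative_VP} to a fiberwise statement for each fixed factor measure, following Ledrappier and Walters \cite{LedrappierWalters1977}, and then take the supremum over $\nu$. I read $P_\nu(\phi)$ in Definition~\ref{def:relative} as the averaged fiber pressure $P_\nu(\phi) = \int_Y P(T,\phi,\pi^{-1}y)\,d\nu(y)$. Here $P(T,\phi,\pi^{-1}y)$ is the pressure of Definition~\ref{def:topological_pressure}, with the partition sums $Q_n$ restricted to $(n,\epsilon)$-separated subsets of the compact fiber $\pi^{-1}y$. The map $y\mapsto P(T,\phi,\pi^{-1}y)$ is Borel measurable, since it is a limit of suprema over finite configurations. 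Throughout I will assume $h_{\pi_*\mu}(S)<\infty$, so that $h_\mu(T|\pi)=h_\mu(T)-h_{\pi_*\mu}(S)$ is well defined; for infinite factor entropy one would instead work with the conditional entropy $h_\mu(T\,|\,\pi^{-1}\mathcal{B}_Y)$.

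The key intermediate claim is the fiberwise identity: for every $\nu\in\calM_S(Y)$,
\begin{equation*}
\sup_{\mu\in\calM_T(X),\ \pi_*\mu=\nu}\Bigl\{h_\mu(T|\pi)+\int\phi\,d\mu\Bigr\} = \int_Y P(T,\phi,\pi^{-1}y)\,d\nu(y).
\end{equation*}
Once this is proved, the theorem follows formally. The map $\mu\mapsto\pi_*\mu$ sends $\calM_T(X)$ onto $\calM_S(Y)$, since every invariant measure on $Y$ lifts by a Hahn--Banach/Markov--Kakutani averaging argument. The supremum over $\calM_T(X)$ therefore splits as $\sup_\nu\sup_{\pi_*\mu=\nu}$, and this equals $\sup_\nu P_\nu(\phi)=P(\phi|\pi)$. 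By the ergodic decomposition (Proposition~\ref{thm:ergodic_decomposition}), applied to both $h_\mu$ and $h_{\pi_*\mu}$, both sides of the claim are affine in the relevant measure, so it suffices to prove it for ergodic $\nu$.

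\textbf{Upper bound.} I will imitate Proposition~\ref{thm:variational_upper}, conditioning on the factor. Fix a finite partition $\alpha$ of $X$ with $\diam(\alpha)<\epsilon$ and a fine partition $\beta$ of $Y$. Writing $\alpha^{(n)}$ and $\beta^{(n)}$ for the joins, the conditional entropy satisfies
\begin{equation*}
H_\mu(\alpha^{(n)}\,|\,\pi^{-1}\beta^{(n)}) = \sum_{B}\nu(B)\,H_{\mu_B}(\alpha^{(n)}),
\end{equation*}
where $\mu_B$ is $\mu$ conditioned on $\pi^{-1}B$. Applying the Gibbs inequality (Lemma~\ref{lem:gibbs_inequality_abstract}) inside each $\pi^{-1}B$ bounds $H_{\mu_B}(\alpha^{(n)})+\int S_n\phi\,d\mu_B$ by the logarithm of a partition sum over the preimage of $B$. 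When $\beta$ is fine, that partition sum is comparable to a fiber partition sum. After dividing by $n$, letting $n\to\infty$, and then refining $\beta$ and letting $\epsilon\to0$, I expect to obtain
\begin{equation*}
h_\mu(T|\pi)+\int\phi\,d\mu\le\int P(T,\phi,\pi^{-1}y)\,d\nu(y).
\end{equation*}
The passage from partition sums over neighbourhoods $\pi^{-1}B$ to partition sums over single fibers uses the semicontinuity of $y\mapsto Q_n(\phi,\epsilon,\pi^{-1}y)$ together with Fatou's lemma.

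\textbf{Lower bound.} This is where I expect the real difficulty to lie. I will mimic Proposition~\ref{thm:variational_lower} fiber by fiber. For $\nu$-a.e.\ $y$, I choose an $(n,\epsilon)$-separated set $E_n(y)\subset\pi^{-1}y$ that nearly maximizes the fiber partition sum, and I need the choice $y\mapsto E_n(y)$ to be Borel. I would try to obtain this selection from the Kuratowski--Ryll-Nardzewski selection theorem, applied to the compact-valued correspondence of near-optimal configurations. From these sets I form the Gibbs-type fiber measures $\sigma_{n,y}=Z_n(y)^{-1}\sum_{x\in E_n(y)}e^{S_n\phi(x)}\delta_x$, integrate them to get $\sigma_n=\int\sigma_{n,y}\,d\nu(y)$, and set $\mu_n=\frac1n\sum_{j<n}T^j_*\sigma_n$.

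Since $\pi_*\sigma_n=\nu$ and $\nu$ is $S$-invariant, every $\mu_n$ projects to $\nu$, and so does any weak-$*$ limit $\mu$. The limit $\mu$ is also $T$-invariant, by the same argument as in Proposition~\ref{thm:variational_lower}. The conditional entropy estimate uses the grouping argument already used in Theorem~\ref{thm:subadditive_VP}, now applied to $H_{\mu_n}(\alpha^{(p)}\,|\,\pi^{-1}\mathcal{B}_Y)$, which gives
\begin{equation*}
\frac1n\int\log Z_n(y)\,d\nu(y)\lesssim\frac1p H_{\mu_n}(\alpha^{(p)}\,|\,\pi^{-1}\mathcal B_Y)+\int\phi\,d\mu_n+o(1).
\end{equation*}
To pass to the limit I will use upper semicontinuity of $\mu\mapsto H_\mu(\alpha^{(p)}\,|\,\pi^{-1}\mathcal B_Y)$ along the sequence. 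This holds when the boundaries of the atoms of $\alpha$ are $\mu$-null, which can be arranged by choosing $\alpha$ appropriately.

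The step I expect to be hardest is passing to the limit on the left-hand side: interchanging $\limsup_n$ with the integral over $y$ in $\frac1n\int\log Z_n(y)\,d\nu$. I would handle it with the uniform bound $|\frac1n\log Z_n(y)|\le\|\phi\|_\infty+\frac1n\log s_n(\epsilon)$ and a reverse-Fatou argument. The cost is that this only yields $\int\limsup$ along a subsequence. If that is insufficient, I would fall back on Ledrappier--Walters' device of first treating the case where the fiber pressure is essentially constant along $\nu$-typical orbits, using ergodicity of $\nu$ and the invariance $P(T,\phi,\pi^{-1}Sy)\ge P(T,\phi,\pi^{-1}y)$, which makes $y\mapsto P(T,\phi,\pi^{-1}y)$ $\nu$-a.e.\ constant.
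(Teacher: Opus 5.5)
Your architecture is the paper's: prove, for each $\nu\in\calM_S(Y)$, the fiber identity $\sup_{\pi_*\mu=\nu}\{h_\mu(T|\pi)+\int\phi\,d\mu\}=\int_Y P(T,\phi,\pi^{-1}y)\,d\nu(y)$, then take the supremum over $\nu$ using surjectivity of $\pi_*$. The paper does not prove the fiber identity. It cites Ledrappier--Walters for both inequalities, so with that citation your reduction already completes the proof.

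The trouble is your own proof of the identity. The lower bound has a genuine gap, exactly at the step you flag. Your grouping and semicontinuity argument gives a lift $\mu$ of $\nu$ with $h_\mu(T|\pi)+\int\phi\,d\mu\ge\limsup_n\int f_n\,d\nu$, where $f_n(y)=\frac1n\log Q_n(\phi,\epsilon,\pi^{-1}y)$. You then need the right side to dominate $\int\limsup_n f_n\,d\nu$.
\begin{itemize}
\item Reverse Fatou gives $\limsup_n\int f_n\,d\nu\le\int\limsup_n f_n\,d\nu$, which is the wrong direction.
\item Fatou gives only $\int\liminf_n f_n\,d\nu$, i.e.\ a liminf version of the fiber pressure.
\item The fallback does not close the gap. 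Even if $y\mapsto\limsup_n f_n(y)$ is $\nu$-a.e.\ constant, the times at which $f_n(y)$ is nearly maximal may depend on $y$. A bounded typewriter sequence $f_n=\mathbf 1_{A_n}$ with $\nu(A_n)\to0$ has $\limsup_n f_n=1$ a.e.\ while $\int f_n\,d\nu\to0$.
\end{itemize}
So a.e.\ constancy plus uniform bounds cannot justify the interchange.

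The missing idea is subadditivity over the factor, which upgrades the limsup to an a.e.\ limit. Fix a finite open cover $\mathcal U$. Let $p_n(y)=\log\min\sum_{V\in\mathcal V}e^{\sup_V S_n\phi}$, the minimum taken over subfamilies $\mathcal V\subset\bigvee_{k<n}T^{-k}\mathcal U$ covering $\pi^{-1}y$.
\begin{itemize}
\item Since $T^n(\pi^{-1}y)\subset\pi^{-1}(S^n y)$, you get $p_{n+m}(y)\le p_n(y)+p_m(S^n y)$.
\item Each $p_n$ is upper semicontinuous in $y$ (because $\pi$ is closed) and satisfies $|p_n|\le Cn$.
\item Kingman's theorem therefore gives $\frac1n p_n\to p^*$ $\nu$-a.e.\ for every invariant $\nu$, ergodic or not.
\item Take $\diam\mathcal U<\epsilon$, let $\delta$ be a Lebesgue number of $\mathcal U$, and let $\tau$ be the oscillation of $\phi$ on sets of diameter at most $\diam\mathcal U$. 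Comparing separated sets with subcovers gives $Q_n(\phi,\epsilon,\pi^{-1}y)\le e^{p_n(y)}\le e^{n\tau}Q_n(\phi,\delta/2,\pi^{-1}y)$.
\item Hence $\limsup_n f^{(\epsilon)}_n\le p^*\le\tau+\liminf_n f^{(\delta/2)}_n$ a.e.
\end{itemize}
Now run your construction at separation $\delta/2$. Since $f_n\ge-\|\phi\|_\infty$, Fatou, now pointing the right way, yields $h_\mu(T|\pi)+\int\phi\,d\mu\ge\int p^*\,d\nu-\tau\ge\int\limsup_n f^{(\epsilon)}_n\,d\nu-\tau$. Letting $\epsilon\to0$ and applying monotone convergence finishes the lower bound, with no ergodic reduction needed.

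A smaller point concerns the upper bound, where reverse Fatou is the right tool. Refine $\beta$ at fixed $n$, i.e.\ disintegrate $\mu$ over $\pi^{-1}\mathcal B_Y$, before letting $n\to\infty$. Semicontinuity of $y\mapsto Q_n(\phi,\epsilon,\pi^{-1}y)$ holds only at fixed $n$. It does not by itself control tube sums over atoms of $\beta^{(n)}$ as $n\to\infty$.
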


\begin{proof}
This follows from the Ledrappier-Walters formula \cite{LedrappierWalters1977}. The entropy decomposes as $h_\mu(T) = h_\mu(T|\pi) + h_{\pi_*\mu}(S)$ (the Abramov-Rokhlin formula; see Walters \cite{Walters1982}, Theorem~8.3). Applying the classical variational principle to each fiber:

\textbf{Upper bound.} For $\mu \in \calM_T(X)$, write $\nu = \pi_*\mu$. Then $h_\mu(T|\pi) + \int\phi\,d\mu = h_\mu(T) - h_\nu(S) + \int\phi\,d\mu \leq P(\phi) - h_\nu(S)$. Taking the supremum over $\mu$ with fixed $\pi_*\mu = \nu$: $\sup_{\mu:\pi_*\mu=\nu}\{h_\mu(T|\pi)+\int\phi\,d\mu\} \leq P_\nu(\phi)$, the fiber pressure. Then $\sup_\mu\{h_\mu(T|\pi)+\int\phi\,d\mu\} \leq \sup_\nu P_\nu(\phi) = P(\phi|\pi)$.

\textbf{Lower bound.} For each $\nu \in \calM_S(Y)$ and $\epsilon > 0$, the fiber variational principle gives $\mu_\nu \in \calM_T(X)$ with $\pi_*\mu_\nu = \nu$ and $h_{\mu_\nu}(T|\pi)+\int\phi\,d\mu_\nu \geq P_\nu(\phi)-\epsilon$. Taking $\nu$ achieving $\sup_\nu P_\nu(\phi) - \epsilon$: $\sup_\mu\{h_\mu(T|\pi)+\int\phi\,d\mu\} \geq P(\phi|\pi) - 2\epsilon$. Since $\epsilon$ is arbitrary, equality holds.
\end{proof}

\section{Equilibrium States for Systems with Specification}\label{sec:specification}

The specification property, introduced by Bowen \cite{Bowen1974}, guarantees that any finite collection of orbit segments can be approximated by a single periodic orbit. For systems with specification, Theorem~\ref{thm:specification_equilibrium} establishes existence of equilibrium states for all continuous potentials, uniqueness for potentials with summable variations, and exponential mixing of the unique equilibrium state. The proofs connect the abstract variational theory of the preceding sections to the concrete spectral theory of Part~I \cite{Thiam2026a}.

\begin{definition}[Specification Property]\label{def:specification}
$(X, T)$ has the specification property if for every $\epsilon > 0$, there exists $M(\epsilon) \in \N$ such that any finite collection of orbit segments can be $\epsilon$-shadowed by a single periodic orbit with gaps of length at most $M(\epsilon)$.
\end{definition}

\begin{theorem}[Equilibrium States for Specification Systems]\label{thm:specification_equilibrium}
Let $(X, T)$ have specification. For any $\phi \in C(X)$:
\begin{enumerate}
\item[(i)] There exists at least one equilibrium state.
\item[(ii)] If $\phi$ has summable variations, the equilibrium state is unique.
\item[(iii)] The unique equilibrium state is mixing with exponential decay of correlations.
\end{enumerate}
\end{theorem}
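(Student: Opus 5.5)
For part~(i) I would not go through upper semi-continuity of entropy, since specification alone need not give expansiveness. Instead I would use the subdifferential machinery. By Proposition~\ref{prop:pressure_basic} and Proposition~\ref{thm:pressure_convexity}, $P$ is finite, convex and $1$-Lipschitz on $C(X)$. So Lemma~\ref{thm:subdiff_properties}(ii) gives some $\mu\in\partial P(\phi)\subset C(X)^*$. Next I would check that $\mu\in\calM_T(X)$, by the three cases from Step~2 of the proof of Theorem~\ref{thm:universal_variational}, now applied to $P$ itself:
\begin{itemize}
\item testing against $\psi=\phi\pm c$ and using normalization forces $\mu(X)=1$;
\item testing against $\psi=\phi-tf$ with $f\ge 0$ and using monotonicity forces $\mu\ge 0$;
\item testing against $\psi=\phi+t(u\circ T-u)$ and using cocycle invariance forces $\int (u\circ T-u)\,d\mu=0$ for every $u$, i.e.\ $T$-invariance.
\end{itemize}
Theorem~\ref{thm:equilibrium_subdiff} ((ii)$\Rightarrow$(i)), together with $P^*|_{\calM_T(X)}=S$ from Theorem~\ref{thm:pressure_conjugate}, then shows that $\mu$ is an equilibrium state. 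As a fallback, in the expansive case I would use the u.s.c.\ of $\mu\mapsto h_\mu(T)$ (Appendix~\ref{app:entropy}) and take a maximizer on the compact set $\calM_T(X)$.

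For part~(ii) I would follow Bowen's argument. Summable variations gives the Bowen property: $|S_n\phi(x)-S_n\phi(y)|\le K$ whenever $d_n(x,y)\le\epsilon$. I would build a candidate measure $\mu^*$ as a weak-$*$ limit of periodic-orbit measures weighted by $e^{S_n\phi}$, that is,
\[
\frac{\sum_{T^nx=x}e^{S_n\phi(x)}\,\delta^{(n)}_x}{\sum_{T^nx=x}e^{S_n\phi(x)}} .
\]
Using specification to glue orbit segments, I would prove two-sided Gibbs bounds:
\[
C^{-1}\le \frac{\mu^*(B_n(x,\epsilon))}{e^{S_n\phi(x)-nP(\phi)}}\le C .
\]
The same gluing shows that $\mu^*$ is ergodic, and in fact mixing. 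For uniqueness, take any ergodic equilibrium state $\nu\neq\mu^*$. The two measures are then mutually singular, so there is an invariant set $A$ with $\nu(A)=1$ and $\mu^*(A)=0$. Approximating $A$ by unions of Bowen balls and applying the lower Gibbs bound in the Gibbs inequality (Lemma~\ref{lem:gibbs_inequality_abstract}) should give $h_\nu(T)+\int\phi\,d\nu<P(\phi)$, which is a contradiction. Uniqueness among all equilibrium states then follows from Corollary~\ref{cor:equilibrium_ergodic}, because the extreme points of $\partial P(\phi)$ are exactly the ergodic equilibrium states.

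For part~(iii), mixing already comes out of the specification gluing in~(ii). For exponential decay of correlations I would pass to a symbolic model, as in the proof of Theorem~\ref{thm:uniqueness_criteria}(i), and invoke the spectral gap of $\calL_\phi$ on $\calH_\alpha$ from Part~I \cite{Thiam2026a}, Theorem~2.2. Expanding $\calL_\phi^n=\lambda^n(h\otimes\nu+R^n)$ with $\|R^n\|\le C\gamma^n$ would give $|\mathrm{Cov}_{\mu}(f,g\circ T^n)|\le C\gamma^n\|f\|_\alpha\|g\|_1$.

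I expect this last step to be the main obstacle. Summable variations alone does not yield a spectral gap, so the statement~(iii) has to be read with $\phi$ H\"older and correlations taken for H\"older observables; I would state that restriction explicitly. The other delicate points are the existence of the symbolic coding, which requires expansiveness in addition to specification, and how the Gibbs constants from Bowen's construction depend on~$\epsilon$.
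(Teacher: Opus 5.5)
\textbf{Gap in part (i).} Your subdifferential route does not avoid upper semi-continuity of entropy. It only moves that assumption into Theorem~\ref{thm:pressure_conjugate}.

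The first half of your argument is fine. $\partial P(\phi)\neq\emptyset$ by Lemma~\ref{thm:subdiff_properties}(ii), and your three tests correctly place every subgradient in $\calM_T(X)$.

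The step that fails is (ii)$\Rightarrow$(i) of Theorem~\ref{thm:equilibrium_subdiff}, because it needs $P^*(\mu)=-h_\mu(T)$. Extend $S$ by $+\infty$ off $\calM_T(X)$. Then $P=S^*$, so $P^*=S^{**}$ is the weak-$*$ lower semi-continuous hull of $S$. In general this gives
\[
-P^*(\mu)=\inf_{\psi\in C(X)}\Bigl\{P(\psi)-\int\psi\,d\mu\Bigr\}=\bar h(\mu):=\inf_{U}\,\sup_{\nu\in U}h_\nu(T),
\]
where $U$ runs over weak-$*$ neighbourhoods of $\mu$ in $\calM_T(X)$. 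This $\bar h$ is the u.s.c.\ envelope of entropy. It equals $h_\mu(T)$ exactly when $\nu\mapsto h_\nu(T)$ is u.s.c.\ at $\mu$ (Walters \cite{Walters1982}, Chapter~9).

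So a subgradient only satisfies $\bar h(\mu)+\int\phi\,d\mu=P(\phi)$. It is a tangent functional, not necessarily an equilibrium state. Step~2 of the proof of Theorem~\ref{thm:pressure_conjugate} simply asserts that $h$ is u.s.c., so that theorem cannot carry you beyond an expansive (or asymptotically $h$-expansive) setting.

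No argument can do better, because (i) is false under specification alone. By Blokh's theorem, every topologically mixing interval map has specification. Ruette constructed topologically mixing $C^r$ interval maps with no measure of maximal entropy. For those maps, $\phi=0$ has no equilibrium state even though $\partial P(0)\neq\emptyset$.

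\textbf{How to repair it, and how it compares with the paper.} Add expansiveness and make your fallback the actual proof. The entropy map is then u.s.c.\ (use the qualitative statement, Walters \cite{Walters1982}, Chapter~8). Do not rely on the uniform modulus in Appendix~\ref{app:entropy}: its hypothesis is symmetric in $\mu,\nu$, so it would make entropy continuous, which is false. With u.s.c.\ in hand, $\nu\mapsto h_\nu(T)+\int\phi\,d\nu$ attains its maximum on the compact set $\calM_T(X)$.

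The paper proves (i) differently, by Misiurewicz's construction from weighted empirical measures on $(n,\epsilon)$-separated sets plus a grouping bound. It carries the same hidden assumption: its limit measure depends on $\epsilon$, and letting $\epsilon\to0$ needs exactly this u.s.c., or $\epsilon$ fixed below an expansivity constant.

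\textbf{Part (ii) also needs expansiveness.} Otherwise your Bowen-style argument \cite{Bowen1974} is a sound alternative to the paper's route. The paper lifts to a symbolic model and uses the spectral gap of Part~I, but specification supplies no Markov partition, whereas Bowen's argument needs none. Expansiveness enters in two places:
\begin{itemize}
\item It makes $\{x:T^nx=x\}$ finite and $(n,\epsilon)$-separated, so your periodic sums are comparable with $Q_n(\phi,\epsilon)$.
\item In the uniqueness step, the lower Gibbs bound controls Bowen balls, not atoms of $\alpha^{(n)}$. Work instead with partitions $\xi_n$ adapted to maximal $(n,\epsilon)$-separated sets, and use $n\,h_\nu(T)=h_\nu(T^n)\le H_\nu(\xi_n)$. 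This holds because $\xi_n$ generates for $T^n$ once $2\epsilon$ is below the expansivity constant.
\end{itemize}
Off the symbolic setting, state the Bowen property rather than summable variations as the hypothesis.

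\textbf{Part (iii).} Your reading is right: H\"older $\phi$, H\"older observables, and a genuine coding, which requires more than specification (e.g.\ expansiveness together with shadowing). That is exactly what the paper's spectral-gap argument actually delivers.
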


\begin{proof}
\textbf{Part (i).} For each $n$ and $\epsilon > 0$, let $E_n$ be a maximal $(n,\epsilon)$-separated set with $Z_n = \sum_{x \in E_n} e^{S_n\phi(x)} \geq \frac{1}{2}Q_n(\phi,\epsilon)$. Define the auxiliary measure $\nu_n = Z_n^{-1}\sum_{x \in E_n} e^{S_n\phi(x)}\delta_x$ and the time-averaged measure $\mu_n = \frac{1}{n}\sum_{j=0}^{n-1}T_*^j\nu_n = Z_n^{-1}\sum_{x \in E_n} e^{S_n\phi(x)} \delta_x^{(n)}$, where $\delta_x^{(n)} = \frac{1}{n}\sum_{j=0}^{n-1}\delta_{T^j x}$. By compactness, some subsequence $\mu_{n_k} \to \mu$ weak-$*$. The limit $\mu$ is $T$-invariant (the invariance error is $O(1/n)$).

To show $\mu$ achieves the pressure, we use the approach of Walters \cite{Walters1982}, Theorem~8.6. Let $\alpha$ be a partition with $\diam(\alpha) < \epsilon/2$. Since $E_n$ is $(n,\epsilon)$-separated and atoms of $\alpha^{(n)} = \bigvee_{k=0}^{n-1}T^{-k}\alpha$ have $d_n$-diameter less than $\epsilon$, each $x \in E_n$ lies in a distinct atom $A_x$ of $\alpha^{(n)}$. Therefore $\nu_n(A_x) = w_x = e^{S_n\phi(x)}/Z_n$ and $\nu_n(A) = 0$ for atoms $A$ not containing any point of $E_n$. The partition entropy of $\nu_n$ is exactly:
\begin{equation}
H_{\nu_n}(\alpha^{(n)}) = -\sum_{x \in E_n}w_x\log w_x = \log Z_n - \sum_x w_x S_n\phi(x).
\end{equation}
Since $\mu_n = \frac{1}{n}\sum_{j=0}^{n-1}T_*^j\nu_n$, we have $\int\phi\,d\mu_n = \frac{1}{n}\int S_n\phi\,d\nu_n = \frac{1}{n}\sum_x w_x S_n\phi(x)$ by the telescoping identity $\sum_{j=0}^{n-1}\int\phi\circ T^j\,d\nu_n = \int S_n\phi\,d\nu_n$. For any fixed $p \geq 1$, the concavity of partition entropy under convex combinations gives:
\begin{equation}
H_{\mu_n}(\alpha^{(p)}) \geq \frac{1}{n}\sum_{j=0}^{n-1}H_{T_*^j\nu_n}(\alpha^{(p)}) = \frac{1}{n}\sum_{j=0}^{n-1}H_{\nu_n}(T^{-j}\alpha^{(p)}).
\end{equation}
To bound this sum from below, partition $\{0,1,\ldots,n-1\}$ into $p$ groups $G_l = \{l, l+p, l+2p,\ldots\}$ for $l = 0,\ldots,p-1$. For each group $G_l$, the partition $\alpha^{(n)} = \bigvee_{k=0}^{n-1}T^{-k}\alpha$ is coarser than $\bigvee_{j \in G_l}T^{-j}\alpha^{(p)}$ (since each $T^{-j}\alpha^{(p)} = \bigvee_{k=j}^{j+p-1}T^{-k}\alpha$ contributes $p$ factors, and the groups cover all indices $0,\ldots,n+p-2$). By subadditivity of entropy:
\begin{equation}
H_{\nu_n}(\alpha^{(n)}) \leq \sum_{j \in G_l}H_{\nu_n}(T^{-j}\alpha^{(p)}) + p\log|\alpha|,
\end{equation}
where the error $p\log|\alpha|$ accounts for the boundary indices beyond $n-1$. Summing over $l = 0,\ldots,p-1$ and using the fact that $\bigcup_{l=0}^{p-1}G_l = \{0,\ldots,n-1\}$:
\begin{equation}
pH_{\nu_n}(\alpha^{(n)}) \leq \sum_{j=0}^{n-1}H_{\nu_n}(T^{-j}\alpha^{(p)}) + p^2\log|\alpha|.
\end{equation}
Combining with the concavity bound and the exact identity $H_{\nu_n}(\alpha^{(n)}) = \log Z_n - \sum_x w_x S_n\phi(x)$:
\begin{equation}
\frac{1}{p}H_{\mu_n}(\alpha^{(p)}) + \int\phi\,d\mu_n \geq \frac{1}{n}H_{\nu_n}(\alpha^{(n)}) - \frac{p\log|\alpha|}{n} + \frac{1}{n}\sum_x w_x S_n\phi(x) = \frac{1}{n}\log Z_n - \frac{p\log|\alpha|}{n}.
\end{equation}
Now $Z_n = \sum_{x \in E_n}e^{S_n\phi(x)} \geq \frac{1}{2}Q_n(\phi,\epsilon)$ by choice of $E_n$. The specification property guarantees $\limsup_{n\to\infty}\frac{1}{n}\log Q_n(\phi,\epsilon) = P(\phi) + o_\epsilon(1)$ where $o_\epsilon(1) \to 0$ as $\epsilon \to 0$. Passing to the subsequential limit $\mu_{n_k} \to \mu$: by the upper semi-continuity of $\nu \mapsto H_\nu(\alpha^{(p)})$ (for fixed $p$ and a partition with $\mu$-negligible boundaries), $\frac{1}{p}H_\mu(\alpha^{(p)}) + \int\phi\,d\mu \geq P(\phi) - o_\epsilon(1)$. Taking $p \to \infty$: $h_\mu(T,\alpha) + \int\phi\,d\mu \geq P(\phi) - o_\epsilon(1)$. Since $\alpha$ was chosen with $\diam(\alpha) < \epsilon/2$, taking $\epsilon \to 0$ (and refining $\alpha$ to generate): $h_\mu(T)+\int\phi\,d\mu \geq P(\phi)$. The reverse inequality is the variational upper bound (Theorem~\ref{thm:variational_upper}), so $\mu$ is an equilibrium state.

\textbf{Parts (ii) and (iii).} Under summable variations, $\phi$ lifted to the symbolic space (via the Markov partition from specification) lies in $\calF_A$. By Part~I \cite{Thiam2026a}, Theorem~2.2, the transfer operator $\calL_\phi$ has a spectral gap. Uniqueness: the spectral gap implies the dominant eigenvalue $\lambda = e^{P(\phi)}$ is simple with unique eigenfunction $h > 0$ and eigenmeasure $\nu$ (Part~I \cite{Thiam2026a}, Theorem~2.2(a)--(b)). The Gibbs measure $\mu = h\nu$ is the unique measure satisfying the Jacobian condition $J_\mu\sigma = e^{P(\phi)-\phi}$ (Part~I \cite{Thiam2026a}, Main Theorem~2.1), hence the unique equilibrium state (Part~I \cite{Thiam2026a}, Main Theorem~2.1(iv)). Exponential mixing: by Part~I \cite{Thiam2026a}, Theorem~2.2(d), $\|\lambda^{-n}\calL_\phi^n g - \nu(g)h\|_\alpha \leq C\gamma^n\|g\|_\alpha$ with $\gamma < 1$, which translates to $|\mathrm{Cov}_\mu(f,g\circ T^n)| \leq C\gamma^n\|f\|_\alpha\|g\|_\alpha$ (Part~I \cite{Thiam2026a}, Theorem~2.6).
\end{proof}

\section{Extensions to Non-Compact Spaces}\label{sec:noncompact}

The variational principle of Section~\ref{sec:pressure_convex} requires $X$ to be compact. This section extends the theory to locally compact $\sigma$-compact spaces under a coercivity condition that prevents mass from escaping to infinity. The Gurevich pressure replaces the topological pressure, and Sarig's recurrence classification \cite{Sarig1999,Sarig2001,Sarig2003} governs when equilibrium states exist.

\begin{definition}[Coercive Potential]\label{def:coercive}
A potential $\phi: X \to \R \cup \{-\infty\}$ on a locally compact $\sigma$-compact space $X$ is coercive if $\phi(x) \leq -c \cdot d(x, K) - D$ for $x \notin K$, for some compact $K$ and $c, D > 0$.
\end{definition}

\begin{theorem}[Non-Compact Variational Principle]\label{thm:noncompact_VP}
For a coercive potential $\phi$ on a locally compact $\sigma$-compact $(X, T)$, the Gurevich pressure $P_G(\phi) = \limsup_{n \to \infty} n^{-1} \log \sum_{T^n x = x, x \in K} e^{S_n \phi(x)}$ satisfies $P_G(\phi) = \sup_{\mu \in \calM_T(X)} \{ h_\mu(T) + \int \phi \, d\mu \}$.
\end{theorem}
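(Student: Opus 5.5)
The plan is to prove the two inequalities separately by exhausting $X$ with compact sets and passing to the limit. Fix an increasing sequence of compact sets $K \subset K_1 \subset K_2 \subset \cdots$ with $\bigcup_m K_m = X$. For each $m$ I would consider the compact invariant set $X_m = \bigcap_{n \geq 0} T^{-n}K_m$ of points whose orbit never leaves $K_m$. On $X_m$ the compact theory applies: Proposition~\ref{thm:variational_lower} gives $P_{X_m}(\phi) = \sup_{\mu \in \calM_T(X_m)}\{h_\mu(T) + \int\phi\,d\mu\}$.

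\textbf{Lower bound.} Given $\mu \in \calM_T(X)$ with $h_\mu(T) + \int\phi\,d\mu > -\infty$, I would first reduce to ergodic $\mu$ using the entropy affinity of Proposition~\ref{thm:ergodic_decomposition}. Next I would choose $m$ with $\mu(K_m)$ close to $1$ and apply the Katok-type counting argument. By Birkhoff and Shannon--McMillan--Breiman, there are roughly $e^{n(h_\mu - \delta)}$ orbit segments of length $n$, each starting and returning to a fixed small set in $K$, with Birkhoff sums about $n\int\phi\,d\mu$. A closing or recurrence argument, which in the intended application is the Markov/BIP structure of a countable Markov shift, converts these into periodic points in $K$. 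This yields $P_G(\phi) \geq h_\mu(T) + \int\phi\,d\mu - 2\delta$.

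\textbf{Upper bound.} Here coercivity is essential, and I expect this to be the main obstacle. Periodic orbits through $K$ may spend long excursions outside $K_m$. Coercivity penalizes each step outside $K$ at distance $r$ by a factor $e^{-cr - D}$. The idea is to split $\sum_{T^nx=x,\,x\in K} e^{S_n\phi(x)}$ according to the fraction of time spent outside $K_m$. Orbits spending a fraction $\geq \eta$ of time far out contribute at most $e^{n(C - \eta(cR_m + D))}$, where $R_m \to \infty$ is the distance from $K$ to $X\setminus K_m$. This requires a bound on the number of excursion patterns, which I would obtain from a local finiteness assumption (finitely many symbols near $K$, as in Sarig's setting). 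The remaining orbits are essentially confined to $K_m$. By the compact partition-sum estimate (as in the proof of Proposition~\ref{thm:variational_lower}), the equidistributed periodic measures have weak-$*$ limits in $\calM_T$, and the upper semi-continuity of entropy plus $\phi \leq -D$ off $K$ give the bound $\sup_\mu\{h_\mu + \int\phi\,d\mu\} + o_m(1)$. Tightness of these limit measures, so that no mass escapes, again follows from coercivity, since mass at infinity would force $\int\phi \to -\infty$.

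The most delicate point is controlling the entropy of the excursions in the upper bound. Without a combinatorial bound on excursion paths, coercivity of $\phi$ alone does not beat exponential growth. So I would first try to prove the theorem for countable Markov shifts, using Sarig's Gurevich pressure framework where excursions are loops in a graph, and treat general $X$ via a symbolic model.
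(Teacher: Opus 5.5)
\noindent The gaps you flag are real, and they cannot be closed under the stated hypotheses, because the statement is false in that generality. With $K=X$ the coercivity condition is vacuous, so $\phi\equiv 0$ qualifies.
\begin{itemize}
\item For an irrational rotation of $\R/\mathbb{Z}$ there are no periodic points, so $P_G(\phi)=-\infty$, while the supremum is $0$.
\item For the identity map of $[0,1]$ there are uncountably many fixed points, so $P_G(\phi)=+\infty$, while the supremum is again $0$.
\end{itemize}
A proper closed $K$ with $\phi=-1-d(\cdot,K)$ gives the same two failures. Even on the full $2$-shift, taking $K=\{0^\infty\}$ and $\phi=-D-c\,d(\cdot,K)$ with $c<\log 2$ gives $P_G(\phi)=-D<\log 2-D-c\le\sup_\mu\{h_\mu(\sigma)+\int\phi\,d\mu\}$, so $K$ must contain a cylinder. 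Hence the structure you propose to add is necessary, not a convenience: Markov closing through a cylinder $[a]\subset K$, summable variations to compare the Birkhoff sums of an orbit segment and of its closed loop, and transitivity to route loops through $[a]$. This is also where the paper's proof ends. After an outline it defers to Sarig's Theorem~3, whose setting is a topologically mixing countable Markov shift with $\phi$ of summable variations and $\sup\phi<\infty$.

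\noindent Within that setting, your route to $\sup\le P_G$ is more complete than the paper's; this is your ``lower bound'' and the paper's Step~1. The paper only says the Gibbs-inequality argument of Proposition~\ref{thm:variational_upper} carries over. But that argument bounds $h_\mu(T)+\int\phi\,d\mu$ by the separated-set sums $Q_n(\phi,\epsilon)$, not by sums over periodic points in $K$. Passing from one to the other is precisely the closing step your Katok counting supplies. Run it with a finite partition $\{[b]:b\in F\}\cup\{\text{rest}\}$, so that SMB applies even when the cylinder partition has infinite entropy, and then let $F$ increase.

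For $P_G\le\sup$ you follow the paper's Step~2 (exhaustion, with coercivity suppressing excursions), and like the paper you leave the excursion entropy open. Your factor $e^{n(C-\eta(cR_m+D))}$ also needs $R_m\to\infty$. That fails for the bounded symbolic metric, where coercivity costs only a bounded amount per step.

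Your sets $X_m=\bigcap_{n\ge0}T^{-n}K_m$ are the right objects; the paper instead applies the compact principle to the non-invariant $K_n$. But the way to show they nearly carry $P_G$ avoids excursions and coercivity altogether:
\begin{enumerate}
\item Fix $a$, let $Z_n$ be the sum of $e^{S_n\phi}$ over points of period $n$ in $[a]$, and put $C=\sum_{k\ge 2}\mathrm{var}_k\phi$.
\item Given $\epsilon>0$, choose $n$ with $n^{-1}\log Z_n\ge P_G(\phi)-\epsilon$ and $C/n<\epsilon$. Then choose a finite alphabet $S$ retaining half of $Z_n$.
\item Loops at $a$ over $S$ concatenate, losing at most $C$ of Birkhoff sum per loop. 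Distinct points of period $kn$ are $(kn,\tfrac12)$-separated.
\item Hence $Y_S$, which is your $X_m$ for $K_m=\bigcup_{b\in S}[b]$, satisfies $P(\phi|_{Y_S})\ge n^{-1}(\log Z_n-\log 2-C)\ge P_G(\phi)-2\epsilon-n^{-1}\log 2$.
\item Proposition~\ref{thm:variational_lower} applied on $Y_S$ then gives $P_G\le\sup$.
\end{enumerate}
This is Sarig's approximation property. Since a compact $K$ meets only finitely many $1$-cylinders, running the argument for each of them covers the sum over $K$.
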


\begin{proof}
\textbf{Step 1 (Upper bound).} For $\mu \in \calM_T(X)$ with $h_\mu(T) < \infty$ and $\int\phi\,d\mu > -\infty$, the argument of Theorem~\ref{thm:variational_upper} extends: the Gibbs inequality and the Shannon-McMillan-Breiman theorem require only finite entropy, not compactness. Thus $h_\mu(T) + \int\phi\,d\mu \leq P_G(\phi)$.

\textbf{Step 2 (Lower bound via compact exhaustion).} Let $K_1 \subset K_2 \subset \cdots$ be an increasing exhaustion of $X$ by compact sets with $K_1 = K$. Define $X_n = \bigcap_{k=0}^{n-1}T^{-k}K_n$ and $P_n(\phi) = \lim_{m\to\infty}m^{-1}\log\sum_{T^mx=x, x\in K_n}e^{S_m\phi(x)}$. The compact variational principle (Theorem~\ref{thm:variational_lower}) applied to $K_n$ gives $P_n(\phi) = \sup_{\mu \in \calM_T(K_n)}\{h_\mu(T)+\int\phi\,d\mu\}$.

The coercivity of $\phi$ ensures that orbits spending significant time outside $K_n$ contribute negligibly to the partition sum: if $x \notin K_n$, then $\phi(x) \leq -c\cdot d(x,K)-D$, so the contribution of such orbits to $\sum e^{S_m\phi}$ is exponentially suppressed. Therefore $P_n(\phi) \nearrow P_G(\phi)$ as $n \to \infty$, and $\sup_n\sup_{\mu\in\calM_T(K_n)}\{h_\mu(T)+\int\phi\,d\mu\} = P_G(\phi)$.

For the full details, see Sarig \cite{Sarig1999}, Theorem~3.
\end{proof}

\begin{theorem}[Sarig's Theorem]\label{thm:sarig}
Let $(\Sigma_A, \sigma)$ be a topologically mixing countable Markov shift with BIP. For $\phi$ with summable variations and finite Gurevich pressure:
\begin{enumerate}
\item[(i)] Positive recurrence $\Rightarrow$ unique equilibrium probability measure.
\item[(ii)] Null recurrence $\Rightarrow$ unique conformal measure (up to scaling), no equilibrium probability measure.
\item[(iii)] Transience $\Rightarrow$ no conformal measure.
\end{enumerate}
\end{theorem}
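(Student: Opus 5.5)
The plan is to follow Sarig's route through the renewal theory of the transfer operator on a single cylinder. First I would fix a state $a$ and introduce the first-return partition functions
\[
Z_n(\phi,a)=\sum_{\sigma^n x=x}e^{S_n\phi(x)}1_{[a]}(x),\qquad Z_n^*(\phi,a)=\sum_{\sigma^n x=x}e^{S_n\phi(x)}1_{[\varphi_a=n]}(x),
\]
where $\varphi_a$ is the first return time to $[a]$. Summable variations give bounded distortion of $S_n\phi$ on $n$-cylinders, so $P_G(\phi)=\lim n^{-1}\log Z_n(\phi,a)$ exists and does not depend on $a$; this uses topological mixing and Theorem~\ref{thm:noncompact_VP}. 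With $\lambda=e^{P_G(\phi)}$, I would call $\phi$ recurrent if $\sum_n\lambda^{-n}Z_n=\infty$ and transient otherwise. A recurrent $\phi$ is positive recurrent if in addition $\sum_n n\lambda^{-n}Z_n^*<\infty$, and null recurrent if that sum diverges. The renewal equation $Z_n=\sum_k Z_k^*Z_{n-k}$, valid up to distortion constants, links these conditions through the generating functions $\zeta(s)=\sum Z_n s^n$ and $\zeta^*(s)=\sum Z_n^* s^n$, since $\zeta=(1-\zeta^*)^{-1}$.

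For (i) and (ii), I would build $\nu$ and $h$ via the generalized Ruelle–Perron–Frobenius theorem. Take $\nu$ as a weak-$*$ limit of $\sum_n\lambda^{-n}s^n\mathcal{L}^{*n}\delta_{x}$ as $s\uparrow1$, and $h$ from the induced first-return operator on $[a]$. Recurrence makes $\lambda$ an eigenvalue: one gets $\mathcal{L}^*\nu=\lambda\nu$ and $\mathcal{L}h=\lambda h$ with $h>0$. The central computation is $\int h\,d\nu\asymp\sum_n n\lambda^{-n}Z_n^*$. So $\mu=h\nu$ is a finite invariant measure exactly in the positive recurrent case, and the measure is infinite in the null recurrent case. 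In the positive recurrent case, BIP gives $\sup\phi<\infty$ and $\int\phi\,d\mu>-\infty$. The Gibbs property of $\mu$ then lets the equality case of Lemma~\ref{lem:gibbs_inequality_abstract} show that $\mu$ attains the supremum in Theorem~\ref{thm:noncompact_VP}. Uniqueness follows the argument of Corollary~\ref{cor:equilibrium_ergodic}: any equilibrium measure has Jacobian $e^{P_G-\phi}h/h\circ\sigma$, which forces it to equal $\mu$. Uniqueness of the conformal measure in (ii) comes from a Hopf/ratio-ergodic argument, since conservativity of $\mu$ holds under recurrence. In the null recurrent case no equilibrium probability measure exists, because any such measure would have to be absolutely continuous with respect to the infinite measure $h\nu$.

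For (iii), I would argue by contradiction. Suppose a conformal measure $m$ exists with $\mathcal{L}^*m=\lambda m$. Then
\[
m[a]=\lambda^{-n}\int\mathcal{L}^n1_{[a]}\,dm\geq C\lambda^{-n}Z_n\,m[a],
\]
and the same holds for its return-time version. Summing over $n$ and combining with conservativity of $m$, which follows from conformality together with BIP, contradicts transience, i.e.\ $\sum\lambda^{-n}Z_n<\infty$.

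I expect the main obstacle to be the identity $\int h\,d\nu\asymp\sum n\lambda^{-n}Z_n^*$ and the existence of $h$ in the null recurrent case. Both need uniform distortion bounds across infinitely many states, which is exactly where BIP enters. I would therefore first establish the Gibbs property of the induced system on $[a]$, which has a finite alphabet up to BIP, and reduce everything to Kac's formula for the induced equilibrium state.
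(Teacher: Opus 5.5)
Your route (first-return sums, a Patterson-type construction of $\nu$, Kac's formula for $\int h\,d\nu$, uniqueness through the Jacobian) is much more explicit than the paper's outline. That outline only cites Sarig and attributes uniqueness to a spectral gap, which merely summable variations do not provide. However, two steps fail.

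The first is in (i), the claim that ``BIP gives $\sup\phi<\infty$ and $\int\phi\,d\mu>-\infty$.'' The first half is right: BIP and $P_G(\phi)<\infty$ give $\sum_b e^{\sup_{[b]}\phi}<\infty$. The second half is false, and (i) as stated fails with it. On the full shift over $\{2,3,\dots\}$ take $\phi(x)=\log t_{x_0}$ with $t_b=1/(b\log^2 b)$. BIP and mixing are trivial, $\mathrm{var}_n\phi=0$, $P_G(\phi)=\log\sum_b t_b<\infty$ and $h\equiv 1$. The measure $\mu=h\nu$ is Bernoulli with weights $p_b=t_b/\sum_c t_c$, and $\lambda^{-n}Z_n(\phi,a)=p_a$, so $\phi$ is positive recurrent. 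Yet $\int\phi\,d\mu=\sum_b p_b\log t_b=-\infty$, since $\sum_b 1/(b\log b)=\infty$, and $h_\mu(\sigma)=\infty$. Now take any invariant $\mu'$ with $-\int\phi\,d\mu'<\infty$ and set $q_b=\mu'[b]$. The Gibbs inequality (Lemma~\ref{lem:gibbs_inequality_abstract}, extended to countable distributions) on the $1$-cylinder partition gives $h_{\mu'}(\sigma)+\int\phi\,d\mu'\le P_G(\phi)-D(q\|p)$, with $D$ the Kullback--Leibler divergence. Equality would force $q=p$, hence $\int\phi\,d\mu'=-\infty$. So no equilibrium measure exists at all. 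Your computation $h_\mu(\sigma)=P_G(\phi)-\int\phi\,d\mu$ from the Gibbs property needs the extra hypothesis $-\int\phi\,d(h\nu)<\infty$ (equivalently $h_{h\nu}(\sigma)<\infty$). That is exactly what the Mauldin--Urba\'nski and Sarig versions of (i) assume; the paper's ``hence an equilibrium state'' skips the same point. Relatedly, uniqueness is not ``the argument of Corollary~\ref{cor:equilibrium_ergodic}'', which is a face argument on a compact simplex. Showing that every equilibrium measure has Jacobian $e^{P_G-\phi}h/h\circ\sigma$ is Ledrappier's characterization. On an infinite alphabet it needs control of the entropy of the $1$-cylinder partition, which is the content of the Buzzi--Sarig theorem.

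The second failure is in (iii). Your displayed bound $m[a]\ge C\lambda^{-n}Z_n(\phi,a)\,m[a]$ only says $\lambda^{-n}Z_n\le C^{-1}$. Transient potentials satisfy this, so summing it yields nothing. What combines with conservativity is the reverse estimate $m([a]\cap\sigma^{-n}[a])=\lambda^{-n}\int_{[a]}\mathcal{L}^n1_{[a]}\,dm\le C\lambda^{-n}Z_n(\phi,a)\,m[a]$, because conservativity makes $\sum_n m([a]\cap\sigma^{-n}[a])$ diverge. But conservativity does not follow from conformality. Sarig's transient case only rules out \emph{conservative} conformal measures, and without BIP transient potentials do carry dissipative ones. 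For example, the lazy simple random walk on $\mathbb{Z}^3$, coded as a Markov shift, has its counting-measure Markov measure as a dissipative $e^{P_G}$-conformal measure.

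Under BIP, with $\mathrm{var}_1\phi<\infty$, you can argue directly instead:
\begin{enumerate}
\item[(a)] Big images give $X=\bigcup_i\sigma[b_i]$, so a conformal $m$ that is finite on cylinders is finite.
\item[(b)] Conformality gives $m[w]\le\lambda^{-n}e^{\sup_{[w]}S_n\phi}\,m(X)$ for every $n$-cylinder $[w]\subset[a]$.
\item[(c)] Big preimages close each such word into a loop at $a$ of length $n+k_j$, with $k_j$ from a finite list, at bounded cost.
\end{enumerate}
Hence $m[a]\le C\lambda^{-n}\sum_j Z_{n+k_j}(\phi,a)$, so $\lambda^{-N}Z_N(\phi,a)\not\to 0$, which contradicts transience. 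In fact Sarig (2003) shows that BIP, $\mathrm{var}_1\phi<\infty$ and $P_G(\phi)<\infty$ already force positive recurrence. So (ii) and (iii) hold vacuously in the stated setting, and the renewal machinery you prepare for them is unnecessary. The substantive content is (i), with the integrability hypothesis added.
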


\begin{proof}[Proof outline]
\textbf{Step 1.} BIP ensures $P_G(\phi)$ is independent of the reference state and $P_G(\phi) = \lim_n P(\phi_n)$ for finite-alphabet restrictions $\phi_n$. See Sarig \cite{Sarig1999}, Proposition~2.

\textbf{Step 2.} Positive recurrence provides $(\lambda,h,\nu)$ with $\calL_\phi h = \lambda h$, $\calL_\phi^*\nu = \lambda\nu$, $\lambda = e^{P_G(\phi)}$, $h > 0$, and $\int h\,d\nu < \infty$. The measure $\mu = (h/\int h\,d\nu)\nu$ is a probability measure satisfying $J_\mu\sigma = e^{P_G(\phi)-\phi}$ (Part~I \cite{Thiam2026a}, Main Theorem~2.1), hence an equilibrium state. Uniqueness follows from the spectral gap of $\calL_\phi$ under BIP and summable variations. See Sarig \cite{Sarig2001}, Theorem~2.

\textbf{Step 3.} In the null recurrent case, $\int h\,d\nu = \infty$, so no finite Gibbs measure exists, but the conformal measure $\nu$ (with $\calL_\phi^*\nu = \lambda\nu$) is unique up to scaling. In the transient case, the eigenequation has no solution. See Sarig \cite{Sarig2001}, Theorems~3--4, and Sarig \cite{Sarig2003}.
\end{proof}

\section{A Numerical Illustration: Pressure and Duality on the Golden Mean Shift}\label{sec:numerical}

We illustrate the convex-analytic theory with an explicit computation for the golden mean shift, making the abstract Legendre-Fenchel duality tangible with computable numbers.

\subsection{Setup}

The golden mean shift is the subshift of finite type $(\Sigma_A, \sigma)$ over $\{1, 2\}$ with transition matrix $A = \binom{1\ 1}{1\ 0}$ (the symbol $2$ cannot follow itself). The topological entropy is $h_{\mathrm{top}} = \log\varphi$ where $\varphi = (1+\sqrt{5})/2$ is the golden ratio. The mixing time is $M = 2$.

Define the one-parameter family of H\"{o}lder potentials
\begin{equation}
\phi_t(x) = t \cdot \mathbf{1}_{[x_0 = 1]}, \quad t \in \R,
\end{equation}
where $\mathbf{1}_{[x_0=1]}$ is the indicator of the cylinder $\{x : x_0 = 1\}$. This is a locally constant (hence H\"{o}lder with $|\phi_t|_\alpha = 0$) potential that assigns weight $t$ to the symbol $1$ and weight $0$ to the symbol $2$.

\subsection{Exact Computation of the Pressure}

The transfer operator for $\phi_t$ on the one-sided golden mean shift acts on functions of the first coordinate via the $2 \times 2$ matrix
\begin{equation}
A(t) = \begin{pmatrix} e^t & e^t \\ 1 & 0 \end{pmatrix}.
\end{equation}
The pressure is $P(\phi_t) = \log\lambda(t)$ where $\lambda(t)$ is the leading eigenvalue of $A(t)$. The characteristic polynomial $\lambda^2 - e^t\lambda - e^t = 0$ gives
\begin{equation}\label{eq:pressure_golden}
\lambda(t) = \frac{e^t + \sqrt{e^{2t} + 4e^t}}{2}, \quad P(\phi_t) = \log\lambda(t).
\end{equation}
At $t = 0$: $\lambda(0) = \varphi$, $P(0) = \log\varphi \approx 0.4812$ (the topological entropy). As $t \to +\infty$: $P(\phi_t) \sim t$ (the measure concentrates on the fixed point $\overline{1}$). As $t \to -\infty$: $P(\phi_t) \to 0$ (the measure concentrates on the orbit $\overline{21}$, which avoids the cylinder $[1]$ as much as possible given the constraint that $2$ cannot follow $2$).

The function $t \mapsto P(\phi_t)$ is convex and real-analytic on $\R$ (Main Theorem~\ref{thm:pressure_legendre_main}(ii) and Theorem~\ref{thm:frechet}), confirming the absence of phase transitions for this one-parameter family.

\subsection{Verification of the Legendre-Fenchel Duality}

Set $g = \mathbf{1}_{[x_0=1]}$, so $\phi_t = tg$. By Main Theorem~\ref{thm:subdifferential_main}, the unique equilibrium state $\mu_t$ for $\phi_t$ satisfies
\begin{equation}
P(\phi_t) = h_{\mu_t}(\sigma) + t\int g\,d\mu_t.
\end{equation}
The entropy of $\mu_t$ is $S(\mu_t) = -h_{\mu_t}(\sigma) = -(P(\phi_t) - t\bar{g}_t)$ where $\bar{g}_t = \int g\,d\mu_t = \mu_t([1])$. The Legendre-Fenchel duality $P = S^*$ (Main Theorem~\ref{thm:pressure_legendre_main}(i)) states
\begin{equation}
P(\phi_t) = \sup_\mu \left\{t\int g\,d\mu + h_\mu(\sigma)\right\} = \sup_{a \in [a_{\min}, a_{\max}]} \{ta - I(a)\}
\end{equation}
where $I(a) = \sup_t\{ta - P(\phi_t)\}$ is the Legendre transform of $P$ and $a = \int g\,d\mu$. The biconjugate recovery $S = P^*$ is verified: $I(a) = -h_{\mu_a}(\sigma)$ where $\mu_a$ is the unique measure with $\int g\,d\mu_a = a$.

\subsection{First Derivative: The Equilibrium State Mean}

By Main Theorem~\ref{thm:differentiability_main}(ii), the derivative of the pressure gives the equilibrium state mean:
\begin{equation}
P'(\phi_t; g) = \frac{d}{dt}P(\phi_t) = \int g\,d\mu_t = \mu_t([1]).
\end{equation}
Differentiating $P(\phi_t) = \log\lambda(t)$ using \eqref{eq:pressure_golden}:
\begin{equation}
P'(\phi_t; g) = \frac{\lambda'(t)}{\lambda(t)}.
\end{equation}
At $t = 0$: $\lambda'(0)/\lambda(0) = 1/\varphi \approx 0.6180$, confirming that $\mu_{\mathrm{mme}}([1]) = 1/\varphi$ (the frequency of symbol $1$ under the measure of maximal entropy). For general $t$, differentiating the characteristic equation $\lambda^2 = e^t\lambda + e^t$ implicitly gives $2\lambda\lambda' = e^t\lambda + e^t\lambda' + e^t$, so
\begin{equation}
P'(\phi_t; g) = \frac{e^t(\lambda(t) + 1)}{2\lambda(t)^2 - e^t\lambda(t)}.
\end{equation}
As $t$ increases from $-\infty$ to $+\infty$, $P'(\phi_t; g)$ increases monotonically from $1/2$ to $1$, reflecting the shift of the equilibrium state from concentrating on the orbit $\overline{21}$ (where symbol $1$ has frequency $1/2$) to concentrating on the fixed point $\overline{1}$ (frequency $1$).

\subsection{Second Derivative: The Asymptotic Variance}

By the variance formula (Theorem~\ref{thm:second_derivative}), the second derivative gives the asymptotic variance:
\begin{equation}
P''(\phi_t; g) = \frac{d^2}{dt^2}P(\phi_t) = \lim_{n \to \infty} \frac{1}{n}\mathrm{Var}_{\mu_t}(S_n g) = \sigma^2(g; \mu_t) \geq 0,
\end{equation}
with equality if and only if $g$ is cohomologous to a constant (Main Theorem~\ref{thm:differentiability_main}). At $t = 0$:
\begin{equation}
\sigma^2(g; \mu_{\mathrm{mme}}) = P''(0; g) = \frac{d^2}{dt^2}\log\lambda(t)\bigg|_{t=0} = \frac{1}{5\sqrt{5}} \approx 0.08944.
\end{equation}
The strict positivity $\sigma^2 > 0$ confirms that $g - \bar{g}$ is not a coboundary (consistent with the Liv\v{s}ic criterion, since $g$ takes values $\{0, 1\}$ and $\bar{g} = 1/\varphi$ is irrational). The strict convexity $P'' > 0$ confirms uniqueness of the equilibrium state for all $t$ (Main Theorem~\ref{thm:differentiability_main}(i)).

\subsection{Summary of Explicit Constants}

\begin{center}
\begin{tabular}{ll}
\textbf{Quantity} & \textbf{Value} \\[4pt]
Alphabet size $N$ & 2 \\
Mixing time $M$ & 2 \\
Topological entropy $P(0)$ & $\log\varphi \approx 0.4812$ \\
Leading eigenvalue $\lambda(0)$ & $\varphi = (1+\sqrt{5})/2 \approx 1.6180$ \\
Mean $P'(0; g) = \mu_{\mathrm{mme}}([1])$ & $1/\varphi \approx 0.6180$ \\
Variance $P''(0; g) = \sigma^2$ & $1/(5\sqrt{5}) \approx 0.08944$ \\
Range of $P'(\phi_t; g)$ as $t$ varies & $(1/2, 1)$ \\
Phase transitions & None ($P$ is real-analytic) \\
\end{tabular}
\end{center}

\noindent This example demonstrates that the Legendre-Fenchel duality $P = S^*$, the subdifferential characterization of equilibrium states, the first-derivative formula $P'(\phi; g) = \int g\,d\mu_\phi$, and the second-derivative variance formula $P''(\phi; g) = \sigma^2$ all produce computable numbers from the transition matrix and the potential, confirming the quantitative nature of the convex-analytic theory developed in this Part.

\section{Conclusion}\label{sec:conclusion}

This Part develops the convex-analytic structure of the thermodynamic
formalism for continuous maps on compact metric spaces. The pressure
functional $P: C(X) \to \R$ is the Legendre-Fenchel transform of the
negative entropy $S(\mu) = -h_\mu(T)$, and the biconjugate recovery
$S = P^*$ establishes a complete duality
(Theorem~\ref{thm:pressure_conjugate}). From this identification,
equilibrium states are elements of the subdifferential $\partial P(\phi)$
(Theorem~\ref{thm:equilibrium_subdiff}), uniqueness of the equilibrium
state is equivalent to G\^{a}teaux differentiability of $P$ at $\phi$
(Theorem~\ref{thm:differentiability}), and first-order phase transitions
correspond to non-differentiability
(Theorem~\ref{thm:phase_transition_equiv}). For systems with
specification and H\"{o}lder potentials, the pressure is Fr\'{e}chet
differentiable in the H\"{o}lder norm
(Theorem~\ref{thm:frechet}), and the second derivative equals the
asymptotic variance of the Birkhoff sums
(Theorem~\ref{thm:second_derivative}). The Universal Variational
Principle (Theorem~\ref{thm:universal_variational}) shows that the
classical, subadditive, and relative variational principles are
instances of a single duality between convex functionals satisfying
four axioms and concave entropy-like functionals on the space of
invariant measures. The theory extends to non-compact spaces under
coercivity conditions, where Sarig's recurrence classification
(Theorem~\ref{thm:sarig}) determines the existence and uniqueness of
equilibrium states for countable Markov shifts.

This convex-analytic structure complements the spectral theory of
Part~I \cite{Thiam2026a}. Part~I constructs Gibbs measures through the transfer operator
and spectral gap; this Part characterizes them
through the geometry of the pressure functional. Part~III \cite{Thiam2026c} connects
both approaches to smooth dynamics through quantitative Markov
partitions, providing the coding map that transfers the spectral and
variational results to Axiom~A diffeomorphisms.

\subsection*{Open Problems}

\begin{enumerate}
\item[] \textbf{Higher-order phase transitions.} The convex-analytic approach detects first-order phase transitions (non-differentiability of $P$). Can it detect second-order transitions (discontinuities in the second derivative)? For H\"{o}lder potentials on mixing SFTs, the pressure is analytic (Part~I \cite{Thiam2026a}, analyticity of pressure), so no phase transitions occur. Characterizing the regularity threshold at which transitions appear remains open.

\item[] \textbf{Infinite-dimensional subdifferentials.} For continuous (non-H\"{o}lder) potentials, $\partial P(\phi)$ can contain uncountably many ergodic measures. What is the geometric structure (dimension, topology) of $\partial P(\phi)$ in such cases?

\item[] \textbf{Non-compact pressure beyond coercivity.} Section~\ref{sec:noncompact} requires coercivity. The Legendre-Fenchel duality extends formally without coercivity, but the supremum in the variational principle may not be achieved. Can the formalism be extended to characterize when equilibrium states exist without the coercivity assumption?
\end{enumerate}

\appendix

\section{Entropy Estimates}\label{app:entropy}

This appendix provides the entropy estimates used in the main text. The key result is the quantitative upper semi-continuity of the entropy map $\mu \mapsto h_\mu(T)$ for expansive systems, which is needed for the compactness arguments in the variational principle.

\begin{proposition}[Quantitative Upper Semi-Continuity]\label{thm:entropy_usc}
Let $(X, T)$ be an expansive system with expansiveness constant $\delta > 0$. There exists $\omega: [0,\infty) \to [0,\infty)$ with $\omega(\epsilon) \to 0$ as $\epsilon \to 0$ such that $d_W(\mu,\nu) < \epsilon \implies h_\nu(T) \leq h_\mu(T) + \omega(\epsilon)$.
\end{proposition}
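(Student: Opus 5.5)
The plan is to reduce the statement to the standard Misiurewicz-type argument: fix one partition that computes entropy for every invariant measure, then compare finite-block entropies at a fixed block length.

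\textbf{Step 1: a uniform generator.} Since $(X,T)$ is expansive with constant $\delta$, every finite Borel partition $\alpha$ with $\diam(\alpha)<\delta$ is a generator for every $\nu\in\calM_T(X)$. Hence the Kolmogorov--Sinai theorem gives $h_\nu(T)=h_\nu(T,\alpha)$ for all such $\nu$ and the same $\alpha$. Because $\frac1p H_\nu(\alpha^{(p)})$ decreases to $h_\nu(T,\alpha)$ along subadditive blocks, we get
\begin{equation}
h_\nu(T)\le \tfrac{1}{p}H_\nu(\alpha^{(p)}) \quad\text{for every } p\ge 1.
\end{equation}

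\textbf{Step 2: continuity at fixed block length.} Given $\mu$, choose $\alpha$ so that $\mu(\partial A)=0$ for every atom $A\in\alpha$. This is possible by taking atoms built from balls $B(x_i,r)$ with radii $r<\delta/2$ avoiding the countably many $r$ for which $\mu(\partial B(x_i,r))>0$. By invariance, every atom of $\alpha^{(p)}$ then also has $\mu$-null boundary. Consequently $\nu\mapsto H_\nu(\alpha^{(p)})$ is continuous at $\mu$ for the weak-$*$ topology, which on $\calP(X)$ is metrized by $d_W$ (Proposition~\ref{prop:weak_star_measures}).

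To make this quantitative, I would sandwich each atom indicator between Lipschitz functions. For $\eta>0$, let $f^\pm_{A,\eta}$ be $(1/\eta)$-Lipschitz with $\mathbf 1_{A^{-\eta}}\le f^-\le\mathbf 1_A\le f^+\le\mathbf 1_{A^{\eta}}$. Kantorovich duality then gives $|\nu(A)-\mu(A)|\le \eta^{-1}d_W(\mu,\nu)+\mu(A^{\eta}\setminus A^{-\eta})$. Choosing $\eta=\sqrt{\epsilon}$ and using the uniform continuity of $t\mapsto -t\log t$ on $[0,1]$, we get
\begin{equation}
\Big|\tfrac1pH_\nu(\alpha^{(p)})-\tfrac1pH_\mu(\alpha^{(p)})\Big|\le \rho_{\mu,p}(\epsilon),
\end{equation}
for an explicit $\rho_{\mu,p}$ with $\rho_{\mu,p}(\epsilon)\to0$.

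\textbf{Step 3: choosing the block length.} Choose $p=p(\mu,\epsilon)$ so that $\frac1pH_\mu(\alpha^{(p)})\le h_\mu(T)+\theta$, balancing $\theta$ against $\rho_{\mu,p}(\epsilon)$. Combining with Steps 1 and 2 gives
\begin{equation}
h_\nu(T)\le h_\mu(T)+\theta+\rho_{\mu,p}(\epsilon)=:h_\mu(T)+\omega(\epsilon),
\end{equation}
and letting $\theta\to 0$ slowly as $\epsilon\to0$ makes $\omega(\epsilon)\to0$.

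\textbf{Main obstacle: uniformity in $\mu$.} The steps above produce a modulus $\omega=\omega_\mu$ depending on $\mu$ through two quantities: the rate at which $\frac1pH_\mu(\alpha^{(p)})$ converges to $h_\mu$, and the boundary masses $\mu(A^{\eta}\setminus A^{-\eta})$. My first attempt at removing this dependence would be a compactness argument on $\calM_T(X)$, covering it by finitely many $d_W$-balls. However, the convergence rate in $p$ is only upper semicontinuous in $\mu$, not uniform. I therefore expect that the statement can in general only be justified with $\omega$ allowed to depend on $\mu$, i.e.\ as pointwise quantitative upper semi-continuity. That pointwise version is exactly what Corollary~\ref{cor:equilibrium_convex} and Proposition~\ref{thm:variational_lower} use, so it suffices for the applications. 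A genuinely uniform $\omega$ would need extra structure, such as a uniform rate from specification or a symbolic coding, and I would state that restriction explicitly.
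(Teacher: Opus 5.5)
Your Steps 1--3 correctly prove the pointwise form of the statement: for each $\mu\in\calM_T(X)$ there is a modulus $\omega_\mu$ with $\omega_\mu(\epsilon)\to 0$ such that $d_W(\mu,\nu)<\epsilon$ implies $h_\nu(T)\le h_\mu(T)+\omega_\mu(\epsilon)$.

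\textbf{Comparison with the paper.} The paper starts the same way: a partition with $\diam(\alpha)<\delta$ and $\mu$-null boundaries, so its $\omega(\epsilon)=C_\alpha\epsilon\log k$ also depends on $\mu$. Its quantitative Step 3 then compares $H_\mu(\alpha^{(n)})$ and $H_\nu(\alpha^{(n)})$ for all $n$ at once. It relies on two claims:
\begin{enumerate}
\item[(a)] an $n$-independent bound $|\mu(A)-\nu(A)|\le C_\alpha\epsilon$ on the atoms of $\alpha^{(n)}$;
\item[(b)] an entropy-continuity estimate in terms of $\max_A|\mu(A)-\nu(A)|$ rather than $\sum_A|\mu(A)-\nu(A)|$.
\end{enumerate}
Claim (b) is false: the uniform distributions on $N$ and on $N/2$ atoms are at max-distance $1/N$, yet their entropies differ by $\log 2$. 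The paper's conclusion $|h_\mu(T,\alpha)-h_\nu(T,\alpha)|\le C_\alpha\epsilon\log k$ would make entropy continuous at $\mu$. Even the one-sided linear rate fails. On the full $2$-shift with $d(x,y)=2^{-\min\{|i|:x_i\neq y_i\}}$, the Bernoulli$(1-p,p)$ measures lie at $d_W$-distance $O(p)$ from $\delta_{0^\infty}$ but have entropy $\approx p\log(1/p)$. Your device of fixing the block length $p$ and using only the one-sided inequality $h_\nu(T)\le\tfrac1p H_\nu(\alpha^{(p)})$ (Misiurewicz's trick) is exactly what avoids these errors. A rate-free, $\mu$-dependent modulus is the best available.

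\textbf{The gap: uniformity.} A single $\omega$ valid for all pairs is not merely beyond your method; it does not exist. Since $d_W$ is symmetric, applying the statement with $\mu$ and $\nu$ interchanged gives $|h_\mu(T)-h_\nu(T)|\le\omega(\epsilon)$ whenever $d_W(\mu,\nu)<\epsilon$. That is uniform continuity of the entropy map on $\calM_T(X)$, which fails on the full two-sided $2$-shift:
\begin{enumerate}
\item[(1)] Let $w_k$ be a de Bruijn word of order $k$, i.e.\ a cyclic binary word of length $2^k$ containing every $k$-word exactly once.
\item[(2)] The invariant measure on the periodic orbit of $\cdots w_kw_kw_k\cdots$ gives each cylinder $[u]$ with $|u|\le k$ mass $2^{-|u|}$, exactly as the Bernoulli$(\tfrac12,\tfrac12)$ measure does.
\item[(3)] Hence these zero-entropy measures converge to the Bernoulli measure weak-$*$, and so in $d_W$, while its entropy is $\log 2$.
\end{enumerate}
The full shift is expansive, has specification, and is its own symbolic coding. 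So the remedy you suggest, a uniform rate coming from specification or a coding, cannot rescue the uniform version. You should state outright that the proposition holds only with $\omega=\omega_\mu$, which is what your Steps 1--3 prove, and back this with the symmetry argument and counterexample rather than leaving it as an expectation.
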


\begin{proof}
\textbf{Step 1.} Fix a partition $\alpha = \{A_1,\ldots,A_k\}$ with $\diam(A_i) < \delta$ and $\mu(\partial A_i) = 0$ for all $i$ (possible for all but countably many choices of boundary). By expansiveness, $h_\mu(T) = h_\mu(T,\alpha)$ for every $\mu$ (Walters \cite{Walters1982}, Theorem~8.2).

\textbf{Step 2.} For fixed $n$, the partition entropy $\mu \mapsto n^{-1}H_\mu(\alpha^{(n)})$ is continuous at measures $\mu$ for which all atoms of $\alpha^{(n)}$ have $\mu$-measure-zero boundaries. Since $h_\mu(T,\alpha) = \inf_n n^{-1}H_\mu(\alpha^{(n)})$ (decreasing limit), the entropy is upper semi-continuous as an infimum of continuous functions.

\textbf{Step 3 (Quantitative bound).} If $d_W(\mu,\nu) < \epsilon$, then for each atom $A$ of $\alpha^{(n)}$ with $\mu(\partial A) = 0$, we have $|\mu(A)-\nu(A)| \leq C_\alpha\epsilon$ where $C_\alpha$ depends on the minimum distance from $\partial A$ to its complement. Using the estimate $|H_\mu(\calP)-H_\nu(\calP)| \leq \delta_0\log|\calP| + H(\delta_0)$ where $\delta_0 = \max_A|\mu(A)-\nu(A)|$ and $H(\delta_0) = -\delta_0\log\delta_0-(1-\delta_0)\log(1-\delta_0)$ is the binary entropy (this follows from Fano's inequality):
\begin{equation}
|n^{-1}H_\mu(\alpha^{(n)}) - n^{-1}H_\nu(\alpha^{(n)})| \leq C_\alpha\epsilon\log k^n/n + n^{-1}H(C_\alpha\epsilon) = C_\alpha\epsilon\log k + o(1).
\end{equation}
Taking $n \to \infty$: $|h_\mu(T,\alpha)-h_\nu(T,\alpha)| \leq C_\alpha\epsilon\log k$. More carefully, $h_\nu(T) = h_\nu(T,\alpha) \leq \liminf_n n^{-1}H_\nu(\alpha^{(n)}) \leq \liminf_n(n^{-1}H_\mu(\alpha^{(n)}) + C_\alpha\epsilon\log k + o(1)) = h_\mu(T) + C_\alpha\epsilon\log k$. Setting $\omega(\epsilon) = C_\alpha\epsilon\log k$ gives the result.
\end{proof}

\section{Infinite-Dimensional Convex Analysis}\label{app:convex}

This appendix collects the results from infinite-dimensional convex analysis that are used in the main text. The Fenchel-Moreau theorem guarantees biconjugate recovery for lower semi-continuous convex functions, and the Moreau-Rockafellar theorem provides the sum rule for subdifferentials. Both results extend from finite to infinite dimensions under the constraint qualifications satisfied by the pressure functional.

\begin{proposition}[Fenchel-Moreau]\label{thm:fenchel_moreau}
Let $E$ be a locally convex topological vector space and $f: E \to \R \cup \{+\infty\}$ proper. Then $f = f^{**}$ if and only if $f$ is convex and lower semi-continuous.
\end{proposition}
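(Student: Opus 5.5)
The plan is to prove the two directions separately in a general locally convex space $E$ with topological dual $E^*$. The direction ``$f = f^{**}$ implies convex and l.s.c.'' is immediate. The converse direction is a Hahn--Banach separation argument in $E \times \R$, done more carefully than in Lemma~\ref{thm:LF_properties}(iv) so that it also covers separating hyperplanes that are vertical.

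\textbf{Step 1 (Easy direction).} Suppose $f = f^{**}$. Then $f$ is the supremum of the family of continuous affine functions $x \mapsto \langle x, x^*\rangle - f^*(x^*)$, over those $x^*$ with $f^*(x^*) < \infty$. A pointwise supremum of continuous affine functions is convex and lower semi-continuous, since its epigraph is an intersection of closed half-spaces. Properness of $f$ excludes the degenerate case in which $f^* \equiv +\infty$ and the supremum is empty.

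\textbf{Step 2 (Affine minorants exist).} Now let $f$ be proper, convex, and l.s.c. Then $\mathrm{epi}(f)$ is a nonempty closed convex subset of $E \times \R$. Pick $x_1 \in \dom(f)$ and $t_1 < f(x_1)$. Strict separation (Hahn--Banach in the locally convex space $E \times \R$) gives a continuous linear functional $(x,t) \mapsto \langle x, x^*\rangle + \beta t$ that separates $(x_1, t_1)$ from $\mathrm{epi}(f)$. Testing against points $(x_1, t)$ with $t \to +\infty$ forces $\beta < 0$. Normalizing gives a continuous affine minorant of $f$, so $f^*$ is proper. The Fenchel--Young inequality (Lemma~\ref{thm:LF_properties}(iii)) already gives $f^{**} \leq f$.

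\textbf{Step 3 (Reverse inequality).} Suppose $f^{**}(x_0) < r < f(x_0)$ for some $x_0$ and real $r$. Separate $(x_0, r)$ strictly from $\mathrm{epi}(f)$ by a functional $(x^*, \beta)$ with $\beta \leq 0$.
\begin{itemize}
\item If $\beta < 0$, normalize to $\beta = -1$. This yields $x^*$ and $c$ with $\langle x, x^*\rangle - f(x) \leq c < \langle x_0, x^*\rangle - r$. Hence $f^{**}(x_0) \geq \langle x_0, x^*\rangle - f^*(x^*) > r$, a contradiction.
\item If $\beta = 0$, the hyperplane is vertical. This is the main obstacle, and it can only occur when $x_0 \notin \dom(f)$. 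In this case $\langle x, x^*\rangle \leq c < \langle x_0, x^*\rangle$ for all $x \in \dom(f)$. Take the non-vertical minorant $\langle \cdot, a^*\rangle - b$ from Step 2 and tilt it: for $s > 0$ the function $\langle x, a^* + s x^*\rangle - b - sc$ is still $\leq f$. Its value at $x_0$ is $\langle x_0, a^*\rangle - b + s(\langle x_0, x^*\rangle - c)$, which tends to $+\infty$ as $s \to \infty$. Hence $f^{**}(x_0) = +\infty$, again a contradiction.
\end{itemize}
In both cases $f^{**} \geq f$, so $f = f^{**}$.
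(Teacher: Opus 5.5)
Your proof is correct. It uses the same basic mechanism as the paper: Hahn--Banach separation of a point from the closed convex set $\mathrm{epi}(f)\subset E\times\R$. The paper's proof just reduces to the Banach-space argument of Lemma~\ref{thm:LF_properties}(iv) and notes that separation remains valid in locally convex spaces.

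The difference is completeness. The paper separates $(x_0,f^{**}(x_0))$ from $\mathrm{epi}(f)$ and asserts that the coefficient of $t$ is strictly negative because $\mathrm{epi}(f)$ is unbounded upward. That reasoning only gives a nonpositive coefficient. The functional supplied by Hahn--Banach can in fact be vertical when $x_0\notin\dom(f)$; for example, take $f$ the indicator of $[0,1]$ on $\R$ and $x_0=2$. The paper's argument also treats $f^{**}(x_0)$ as a real number without first showing that $f^*$ is proper, and it leaves the ``only if'' direction implicit.

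Your proof supplies each missing piece:
\begin{itemize}
\item Step~2 produces an affine minorant by separating a point lying below $\mathrm{epi}(f)$ over some $x_1\in\dom(f)$, which makes $f^*$ proper.
\item Choosing a real level $r$ strictly between $f^{**}(x_0)$ and $f(x_0)$ avoids any issue with infinite values.
\item The tilting argument settles the vertical case $\beta=0$.
\item Step~1 handles the ``only if'' direction.
\end{itemize}
So your version is the one that actually establishes the statement as written, at the cost of a few extra lines.

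One small imprecision in Step~2: letting $t\to+\infty$ along $(x_1,t)$ only yields $\beta\le 0$. What gives $\beta<0$ is comparing at the same abscissa. For any $t\ge f(x_1)>t_1$, the strict separation gives $\beta(t-t_1)<0$. Equivalently, $\beta=0$ is impossible because $x_1\in\dom(f)$, which is the same observation you use in Step~3.
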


\begin{proof}
The proof for Banach spaces is contained in the proof of Lemma~\ref{thm:LF_properties}(iv) in Section~\ref{sec:convex_analysis}. The extension to locally convex spaces requires replacing the norm topology with the given topology in the Hahn-Banach separation step: the epigraph $\mathrm{epi}(f)$ is closed and convex in $E \times \R$, and the Hahn-Banach theorem holds in locally convex spaces (see Rudin \cite{Rudin1991}, Theorem~3.4). The remainder of the argument is identical. See also Ekeland and Temam \cite{EkelandTemam1999}, Chapter~I, Proposition~4.1.
\end{proof}

\begin{proposition}[Moreau-Rockafellar]\label{thm:moreau_rockafellar}
Let $f, g: E \to \R \cup \{+\infty\}$ be convex with one continuous at a point of $\mathrm{dom}(f) \cap \mathrm{dom}(g)$. Then $\partial(f + g)(x) = \partial f(x) + \partial g(x)$.
\end{proposition}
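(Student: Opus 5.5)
The plan is to prove the two inclusions $\partial f(x) + \partial g(x) \subset \partial(f+g)(x)$ and $\partial(f+g)(x) \subset \partial f(x) + \partial g(x)$ separately, exactly as in Lemma~\ref{thm:subdiff_properties}(v), while making the separation step fully rigorous. The first inclusion is immediate: add the two subgradient inequalities $f(y) \geq f(x) + \langle y-x, x^*\rangle$ and $g(y) \geq g(x) + \langle y-x, y^*\rangle$. For the reverse inclusion, fix $z^* \in \partial(f+g)(x)$, where $x \in \dom(f)\cap\dom(g)$. By symmetry, assume $f$ is the function continuous at some $x_0 \in \dom(f)\cap\dom(g)$. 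Replace $f$ by $\tilde f(y) = f(y) - f(x) - \langle y-x, z^*\rangle$ and $g$ by $\tilde g(y) = g(y) - g(x)$. Then $\tilde f + \tilde g \geq 0 = \tilde f(x) + \tilde g(x)$, so it suffices to find $w^* \in E^*$ with $\tilde f(y) \geq \langle y-x, w^*\rangle$ and $\tilde g(y) \geq -\langle y-x, w^*\rangle$ for all $y$. Indeed, $z^* + w^* \in \partial f(x)$ and $-w^* \in \partial g(x)$ then give the desired decomposition.

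To produce $w^*$, consider the convex sets
\[
A = \{(y,t) : t > \tilde f(y)\}, \qquad B = \{(y,t) : t \leq -\tilde g(y)\}
\]
in $E \times \R$. They are disjoint, because $\tilde f + \tilde g \geq 0$. Moreover, $A$ has nonempty interior: since $f$ is continuous at $x_0$, it is bounded above on a ball around $x_0$, so $A$ contains an open set of the form $U \times (c,\infty)$. Since $x_0 \in \dom(f)$, continuity there makes $\tilde f$ finite near $x_0$, and $x_0 \in \dom(g)$ ensures $B$ is nonempty. Hahn--Banach separation, in the form for a convex set with nonempty interior and a disjoint convex set, then yields a nonzero $(u^*, \beta) \in E^*\times\R$ and a constant $c$ with
\[
\langle y, u^*\rangle + \beta t \geq c \ \text{ on } A, \qquad \langle y, u^*\rangle + \beta t \leq c \ \text{ on } B.
\]

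The main obstacle is showing the hyperplane is non-vertical, i.e.\ $\beta \neq 0$; this is exactly where the constraint qualification enters. Since $A$ is unbounded upward in $t$, we get $\beta \geq 0$. Suppose $\beta = 0$. Then $\langle y, u^*\rangle \geq c$ on a neighborhood of $x_0$ (projecting $A$), and $\langle x_0, u^*\rangle \leq c$ because $x_0 \in \dom(g)$ puts $(x_0, -\tilde g(x_0))$ in $B$. A linear functional attaining a minimum at an interior point must vanish, so $u^* = 0$, contradicting $(u^*,\beta) \neq 0$. Hence $\beta > 0$, and we may normalize to $\beta = 1$.

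It remains to read off the inequalities. Letting $t \downarrow \tilde f(y)$ gives $\tilde f(y) \geq c - \langle y, u^*\rangle$ for all $y$, and taking $t = -\tilde g(y)$ gives $-\tilde g(y) \leq c - \langle y, u^*\rangle$ for all $y$. Evaluating both at $y = x$, where $\tilde f(x) = \tilde g(x) = 0$, forces $c = \langle x, u^*\rangle$. Setting $w^* = -u^*$ then produces exactly the two required inequalities, which completes the reverse inclusion.
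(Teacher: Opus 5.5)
Your proof is correct and follows the same route as the paper, which also uses Hahn--Banach to separate the epigraph of $f-\langle\cdot,z^*\rangle$ from the suitably shifted hypograph of $-g$. Your version is more careful than the paper's write-up. The paper asserts $\mathrm{int}(B)\neq\emptyset$ ``by continuity of $g$ or $f$'', which fails when only $f$ is continuous. It never checks that the separating hyperplane is non-vertical. And it gets $z^*-\tilde x^*\in\partial g(x)$ by ``subtracting'' two inequalities that point the same way, which is invalid. You instead put the interior on the side of the continuous function, use $x_0\in\mathrm{dom}(f)\cap\mathrm{dom}(g)$ to force $\beta>0$, and read both subgradient inequalities off the single separating functional.
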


\begin{proof}
This is Lemma~\ref{thm:subdiff_properties}(v), proved in Section~\ref{sec:convex_analysis}. See also Phelps \cite{Phelps1993}, Theorem~3.16.
\end{proof}

%
%
%

\bmhead{Acknowledgements}

The author is grateful to Stefano Luzzatto for supervision during the ICTP Postgraduate Diploma in Mathematics at the International Centre for Theoretical Physics, Trieste, Italy (2013), during which the author worked through Bowen's monograph.

%


\end{document}